\numberwithin{equation}{section}
\theoremstyle{plain}
\theoremstyle{remark}
\newtheorem{rem}{Remark}[section]
\newtheorem{theorem}{Theorem}[section]
\newtheorem{lemma}[theorem]{Lemma}
\newtheorem{proposition}[theorem]{Proposition}
\theoremstyle{definition} 								
\newtheorem{example}[theorem]{Example}
\numberwithin{equation}{section}   						
\newcommand{\ind}[1]{\mathds{1}_{#1}}
\newcommand{\Ind}[1]{\mathds{1}_{\{#1\}}}
\newcommand{\ceil}[1]{\lceil #1 \rceil}
\newcommand{\floor}[1]{\lfloor #1 \rfloor}
\def\N{{\mathbb{N}}}
\def\R{{\mathbb{R}}}
\def\Z{{\mathbb{Z}}}
\def\B{{\mathbb{B}}}
\def\GG{{\mathcal{G}}}
\def\AA{{\mathcal{A}}}
\def\BB{{\mathcal{B}}}
\def\eps{{\varepsilon}}
\def\ord{{\hbox{o}}}
\def\Ord{{\hbox{O}}}
\def\pA{{P\{\Theta_i\in A\}}}
\def\pnAh{{\hat p_{n,A}}}
\def\pnAhh{{\hat{\hat p}_{n,A}}}
\def\alphanh{{\hat\alpha_n}}
\def\Znp{{Z_n^{(p)}}}
\def\Zp{{Z^{(p)}}}
\def\Zphi{{Z_\phi}}
\def\Znphi{{Z_{n,\phi}}}
\def\Ztn{{Z_n^{(T)}}}
\def\pnfh{{\hat p_{n,A}^f}}
\def\pnbh{{\hat p_{n,A}^b}}
\def\DN{D{\&}N}
\begin{document}

\begin{frontmatter}
\title{Cluster based inference for extremes of time series}
\runtitle{Cluster based inference}
\begin{aug}
  \author[A]{\fnms{Holger} \snm{Drees}\ead[label=e1]{drees@math.uni-hamburg.de}},
  \author[B]{\fnms{Anja} \snm{Jan{\ss}en}\ead[label=e2]{anja.janssen@ovgu.de}},
  \author[A]{\fnms{Sebastian} \snm{Neblung}\ead[label=e3,mark]{sebastian.neblung@uni-hamburg.de}}
  \address[A]{University of Hamburg, Department of Mathematics,
 SPST, Bundesstr.\ 55, 20146 Hamburg, Germany, \printead{e1,e3}}
 \address[B]{Otto-von-Guericke University of Magdeburg, Faculty of Mathematics, IMST,
 Universit\"{a}tsplatz~2, 39106 Magdeburg, Germany, \printead{e2}}
\end{aug}

\begin{abstract}
  \; We introduce a new type of estimator for the spectral tail process of a regularly varying time series. The approach is based on a characterizing invariance property of the spectral tail process, which is incorporated into the new estimator via a projection technique. We show uniform asymptotic normality of this estimator, both in the case of known and of unknown index of regular variation. In a simulation study the new procedure shows a more stable performance than previously proposed estimators.
\end{abstract}
\begin{keyword}[class=AMS]
\kwd[Primary ]{62G32}  \kwd[; secondary ]{62M10, 62G05, 60F17}
\end{keyword}

\begin{keyword}
\kwd{cluster of extremes, empirical processes, extreme value analysis, projection estimator, spectral tail process,   time series, uniform central limit theorems}
\end{keyword}

\end{frontmatter}												
\section{Introduction}
Statistical methods for assessing environmental hazards, technical failures or financial risks often call for estimating rare events and make use of extreme value theory, see, e.g., \cite{castillo2012extreme, coles2001introduction, embrechts2013} or \cite{finkenstadt2003extreme}. In many of these applications, the temporal dependence structure of sequential observations matters: extremal events may happen in clusters and the length and shape of a cluster are crucial for its impact. For example, heavy rainfall over several days has a different effect on water levels of adjacent rivers than a short cloudburst. Likewise, consecutive large daily losses of a stock will have a more severe impact on an investor's portfolio than a single drop in its value.

In order to have a theoretical framework for such extremal dependence structures, we will assume that our quantities of interest can be described by an $\R^d$-valued  stationary regularly varying time series $(X_t)_{t \in \Z}$. The distribution of its \emph{tail process} $(Y_t)_{t \in \Z}$ is defined by \begin{equation}\label{eq:tailprocess}
\lim_{u \to \infty}P\left(\frac{X_{-m}}{u}\leq x_{-m}, \ldots, \frac{X_{n}}{u}\leq x_{n} \,\Big|\, \|X_0\|>u\right) = P\left\{Y_{-m}\leq x_{-m}, \ldots, Y_n \leq x_n  \right\}
\end{equation}
for all $m,n \in \N_0$ and continuity points $(x_{-m}, \ldots, x_n) \in (\R^d)^{n+m+1}$ of $P^{(Y_t)_{-m \leq t \leq n}}$, where $\| \cdot \|$ denotes an arbitrary but fixed norm on $\R^d$ and all inequalities between vectors are meant componentwise. As shown in \cite{basrak2009}, Theorems 2.1 and 3.1,  $\|Y_0\|$ is Pareto distributed with $P\{\|Y_0\|>x\}=x^{-\alpha}, x\geq 1,$ for some $\alpha>0$ (the so-called index of regular variation), and it is independent of the \emph{spectral tail process}  $\Theta=(\Theta_t)_{t \in \Z}:= (Y_t/\|Y_0\|)_{t \in Z}$, which thus carries all information about the dependence structure of the tail process. It is our aim  to provide estimators for quantities derived from $\Theta$, exemplary done here for $P\{\Theta_i \in A\}$ for suitable Borel sets $A$ in $\R^d$ and $i \in \Z$.

If we observe a finite stretch from a stationary regularly varying time series $(X_t)_{t \in \Z}$, Equation \eqref{eq:tailprocess} together with the definition of $\Theta$ leads to a straightforward estimator of $P\{\Theta_i \in A\}$ given by
\begin{equation}\label{Eq:forwardest}
\hat{p}^f_{n,A}:=\frac{\sum_{t=1}^n\mathds{1}_{\{ \|X_t\|>u_n\}}\ind{A}(X_{t+i}/\|X_t\|)}{\sum_{t=1}^n\mathds{1}_{\{\|X_t\|>u_n\}}},
\end{equation}
where $u_n$ denotes a suitably high threshold, depending on $n$, the number of consecutively observed vectors $(X_t,\ldots,X_{t+i})$. This estimator is called \emph{forward estimator} and asymptotic normality as $n \to \infty$ has been shown under suitable assumptions in the univariate case in \cite{davis2018}, Theorem 3.1.

Now, the assumption of stationarity of $(X_t)_{t \in \Z}$ does not only allow for the construction of consistent estimators, it also has an effect on the structure of the spectral tail process, which can be shown to satisfy the so-called \emph{time change formula}, see \cite{basrak2009}, Theorem 3.1. Namely, for all bounded and measurable $f:(\R^d)^{n+m+1} \to \R$, $n, m \in \N_0$ satisfying $f(x_{-m}, \ldots, x_0, \ldots , x_n)=0$ whenever $x_0=0$, we have
\begin{equation}\label{eq:timechangeform}
E\left[f((\Theta_{t+i})_{-m\leq t\leq n}) \right]=E\left[f\left(\frac{(\Theta_{t})_{-m\leq t\leq n}}{\|\Theta_{-i}\|}\right) \|\Theta_{-i}\|^\alpha \right]
\end{equation}
for all $i \in \Z$.

 For the univariate case and sets of the form $A=(x,\infty)$ or $A=(-\infty, -x)$, $x\geq 0,$ this identity with $f(x_0)=\mathds{1}_{A}(x_0)$ was used in \cite{drees2015} for $i=1$ and in \cite{davis2018} for general $i$ to design an alternative to $\pnfh$, based on the same number $n$ of consecutively observed vectors, given by
\begin{equation}\label{Eq:backwardest}
\hat{p}^b_{n,A}:=\frac{\sum_{t=1}^n\mathds{1}_{\{ \|X_t\|>u_n\}}\ind{A}(X_t/\|X_{t-i}\|) \left\|X_{t-i}/X_t\right\|^{\hat{\alpha}_n}}{\sum_{t=1}^n\mathds{1}_{\{\|X_t\|>u_n\}}},
\end{equation}
where $\hat{\alpha}_n$ is the Hill estimator of $\alpha$, given by
 \begin{equation} \label{eq:Hilldef}
 \alphanh := \frac{\sum_{t=1}^n \Ind{\|X_t\|>u_n}}{\sum_{t=1}^n \log^+(\|X_t\|/u_n)},
 \end{equation}
 with $\log^+ x =\log(x)\vee 0$.
 Simulation studies have shown that this so-called \emph{backward estimator} can have a smaller root mean squared error than the forward estimator in particular for larger values of $x$, see \cite{drees2015} and \cite{davis2018}.

 In this article, we take a more comprehensive approach for using the stationarity of $(X_t)_{t \in \Z}$ when constructing an estimator for the distribution of $\Theta$. It has been shown in \cite{janssen2019} that  the class of all possible spectral tail processes of underlying stationary processes can be characterized with the help of the so-called RS-transform. For a stochastic processes $W=(W_t)_{t \in \Z}$ we write $P^W$ for its distribution and set
 	$$\|W\|_\alpha := \Big(\sum_{t\in\Z}\|W_t\|^\alpha\Big)^{1/\alpha} \in [0,\infty] $$
 for some $\alpha>0$. If $0<\|W\|_\alpha<\infty$ a.s. then the \emph{RS-transform} of $P^W$ is given by
 \begin{equation}\label{eq:RStransform}
 (P^W)^{RS}(B)=E\left(\sum_{j \in \Z}\frac{\|W_j\|^\alpha}{ \|W\|_\alpha^\alpha}\mathds{1}_{B}\left(\frac{\left(W_{t+j}\right)_{t \in \Z}}{\|W_j\|}\right) \right)
 \end{equation}
 for all cylinder sets $B$ in $(\R^d)^{\Z}$. Note that this RS-transform depends on $\alpha$, which we suppress in the notation.
 Theorem 2.4 in \cite{janssen2019} states that processes $\Theta=(\Theta_t)_{t \in \Z}$ with $0<\|\Theta \|_\alpha <\infty$ a.s.\ are spectral tail processes of some underlying stationary time series with index $\alpha$ if and only if their distribution is invariant under the RS-transformation, i.e. $(P^\Theta)^{RS}{=}P^\Theta$.

 Direct calculations furthermore show that for any $P^W$ with $0<\|W\|_\alpha <\infty$ a.s. we have
 $$ ((P^W)^{RS})^{RS}=(P^W)^{RS}, $$
 i.e. for such $P^W$, its RS-transform $(P^W)^{RS}$ satisfies the stated invariance property and is thereby a valid distribution of a spectral tail process of a stationary time series. The RS-transformation can thus be seen as a projection of an initial distribution into the class of admissible limit objects for our estimation problem. Note that common projection estimators would typically transform an initial estimator into its best approximation from the space of admissible quantities by minimizing a suitably chosen norm, see for example \cite{fils2008projection}. In contrast, our approach transforms an initial estimator for $\Theta$ onto the space of admissible processes by making sure that this transformation induces only a random shift in time and scale.

 A straightforward initial empirical version of the distribution of $\Theta$, trimmed to a finite length $2s_n+1, s_n \geq 0,$ can be derived from $n$ consecutive (and overlapping) stretches of $(X_t)_{t \in \Z}$ of this length by
 \begin{equation} \label{eq:def_emp_est}
 \widehat{P^\Theta}=\frac 1{\sum_{t=1}^n  \Ind{\|X_t\|>u_n}} \sum_{t=1}^n \mathds{1}_{\{\|X_t\|>u_n\}}
 \delta_{(\ldots,  0, (\frac{X_{t+j}}{\|X_t\|})_{|j|\leq s_n},  0, \ldots)},
 \end{equation}
 where $\delta_{(\ldots, 0, (\frac{X_{t+j}}{\|X_t\|})_{|j|\leq s_n},  0, \ldots)}$ is the Dirac measure placing mass 1 in the double sided sequence $(z_j)_{j \in \Z}$ with $z_j=X_{t+j}/\|X_t\|, |j| \leq s_n,$ and $z_j=0$, $|j|>s_n$.
   We then apply the RS-transformation to the realization of $\widehat{P^\Theta}$ which yields
  \begin{align}
  	(\widehat{P^\Theta})^{RS}  =&  \frac 1{\sum_{t=1}^n  \Ind{\|X_t\|>u_n}} 	 \sum_{t=1}^n
  	\frac{\Ind{\|X_t\|>u_n}}{\sum_{h=-s_n}^{s_n} \|X_{t+h}\|^\alpha} \sum_{h=-s_n}^{s_n}\|X_{t+h}\|^\alpha  \delta_{(\ldots,  0, (\frac{X_{t+h+j}}{\|X_{t+h}\|})_{j:|h+j|\leq s_n},  0, \ldots)}
  \end{align}
  as a projection based estimator of $P^\Theta$.
  Thus we ensure that our final estimator is derived from a quantity which has all essential structural properties of a spectral tail process. This gives for $P\{\Theta_i\in A\}$ the \emph{projection based estimator}
 \begin{align} \label{eq:pnAhdef}
  \pnAh  &:= (\widehat{P^\Theta})^{RS}\{(x_t)_{t \in \mathbb{Z}} \in (\R^d)^\Z \mid x_i \in A\} =  \frac 1{\sum_{t=1}^n  \Ind{\|X_t\|>u_n}}  \\
 & \times \sum_{t=1}^n
 \frac{\Ind{\|X_t\|>u_n}}{\sum_{h=-s_n}^{s_n} \|X_{t+h}\|^\alpha}\bigg( \sum_{h\in H_n}\|X_{t+h}\|^\alpha  \ind{A}\Big(\frac{X_{t+h+i}}{\|X_{t+h}\|}\Big)
 + \sum_{h\in H_n^c}\|X_{t+h}\|^\alpha \ind{A}(0) \bigg),
 \end{align}
 where $H_n:=\{(-s_n-i)\vee (-s_n),\ldots, (s_n-i)\wedge s_n\}$ and $H_n^c:=\{-s_n,\ldots,s_n\}\setminus H_n$ which equals $\{s_n-i+1,\ldots, s_n\}$ for $i>0$, $\{-s_n,\ldots,-s_n-i-1\}$ for $i<0$, and it is the empty set for $i=0$. We denote the version with estimated $\alpha$ according to \eqref{eq:Hilldef} by
\begin{align} \label{eq:pnAhhdef}
  \pnAhh  & := \frac 1{\sum_{t=1}^n  \Ind{\|X_t\|>u_n}} \sum_{t=1}^n
    \frac{\Ind{\|X_t\|>u_n}}{\sum_{h=-s_n}^{s_n} \|X_{t+h}\|^{\hat\alpha_n}} \times \\
     &  \hspace{3cm} \bigg( \sum_{h\in H_n}\|X_{t+h}\|^{\hat\alpha_n} \ind{A}\Big(\frac{X_{t+h+i}}{\|X_{t+h}\|}\Big)
   + \sum_{h\in H_n^c}\|X_{t+h}\|^{\hat\alpha_n} \ind{A}(0) \bigg).
\end{align}
In Section \ref{section:asymptotics} we state the uniform asymptotic normality of these projection based estimators.
In Subsection \ref{section:SRE} the conditions needed in these results are verified for solutions to stochastic recurrence equations.
The simulations in Section \ref{section:simulations} show that the finite sample performance of $\pnAhh$ is favorable compared to that of $\pnfh$ and $\pnbh$ in several settings. The proofs of the results in Section \ref{section:asymptotics} are given in the Appendix, with some auxiliary lemmas and calculations and further simulations deferred to the Supplement.

Throughout, for vectors $x,y\in\R^q$, we interpret $x\le y$ componentwise, i.e.\ as $x_j\le y_j$ for all $1\le j\le q$, while $x\lneq y$ means $x\leq y$ and $x\ne y$.
Where needed, 0 is also interpreted as a vector with all coordinates equal to 0,  $[x,y):=\times_{j=1}^q [x_j,y_j)$ and $[x,y]:=\times_{j=1}^q [x_j,y_j]$. We denote the positive and negative part of the logarithm by $\log^+x := \log x\vee 0$ and $\log^-x := (-\log x)\vee 0$, respectively. The limit from the left of a function $f$ at $x$ is denoted by $f(x-)$.

\section{Asymptotics}
\label{section:asymptotics}

In this section we establish convergence of the suitably standardized estimators $\pnAh$ and $\pnAhh$ uniformly for all sets $A$ in some family $\AA\subset \B^d$. To this end, note that the estimator with known index of regular variation can be written in the form $\pnAh=T_{n,A}/T_{n,\R^d}$ where
\begin{align}
 T_{n,A} & := \sum_{t=1}^n g_A(W_{n,t}) \qquad \text{with}\qquad
 W_{n,t}  := \bigg(\frac{X_{t+h}}{u_n}\bigg)_{-s_n\le h\le s_n}, \\
 g_A\big((w_h)_{-s\le h\le s}\big) & := \frac{\Ind{\|w_0\|>1}}{\sum_{h=-s}^s \|w_h\|^\alpha} \sum_{h=-s}^s \|w_h\|^\alpha\Big( \ind{A}\Big(\frac{w_{h+i}}{\|w_h\|}\Big)\Ind{|h+i|\le s}+\ind{A}(0)\Ind{|h+i|> s}\Big)
\end{align}
for all $s\in\N$. We will use the theory on the asymptotic behavior of quite general statistics based on sliding blocks of observations, developed by \cite{DN20}, to derive the asymptotics of $(T_{n,A})_{A\in\AA}$ and thus of our estimators.

Throughout, we fix the sequences $s_n\in\N$ and $u_n>0$, $n\in\N$, and the family $\AA$ of Borel sets in $\R^d$. To ensure consistency of the estimators, $u_n$ must tend to $\infty$ sufficiently slowly such that the expected number of exceedances among the observations tends to $\infty$, that is $nv_n\to\infty$ with
\begin{equation} \label{eq:vndef}
  v_n := P\{\|X_1\|>u_n\}.
\end{equation}
We assume that the observed time series is stationary, regularly varying and absolutely regular ($\beta$-mixing)  with $\beta$-mixing coefficients
\begin{equation} \label{eq:betaXdef}
 \beta_{k} := E \Big[
\sup_{B\in\BB_{k+1}^\infty} | P(B|\BB_{-\infty}^0)-P(B)|\Big]
\end{equation}
where $\BB_{-\infty}^0$  and $\BB_{k+1}^\infty$ denote the
$\sigma$-fields generated by $(X_l)_{l\le 0}$ and $(X_l)_{l\ge k+1}$, respectively.
More precisely, we assume
\begin{itemize}
	\item[\bf(RV)] $(X_t)_{t\in\Z}$ is an $\R^d$-valued stationary time series which is regularly varying with index $\alpha>0$, spectral tail process $\Theta=(\Theta_t)_{t\in\Z}$ and tail process $Y=(Y_t)_{t\in\Z}$.
	\item[\bf(S)] The sequences $s_n\in\N$ and $nv_n$ tend to $\infty$ as $n\to\infty$. There exist sequences $l_n,r_n\in\N$, $n\in\N$, such that $s_n\le l_n=\ord(r_n)$, $r_nv_n\to 0$, $r_n=\ord\big(\sqrt{nv_n}\big)$ and $\beta_{l_n}=o(r_n/n)$.
\end{itemize}
Note that if (S) holds for some sequence $(l_n)_{n\in\N}$, then it is fulfilled for any $\tilde l_n\ge l_n$ such that $\tilde l_n=o(r_n)$. Hence, w.l.o.g.\ we may assume that, for any fixed $k\in\N$, $\beta_{l_n-ks_n}=o(r_n/n)$ holds.
While (S) ensures that in the definition of $\pnAh$ summands whose indices $t$ differ by at least $l_n$ are almost independent, it does not suffice to control the size of clusters of extremes. To this end, we use the following two conditions:
\begin{itemize}
	\item[\bf(BC)] For each $c\in(0,1], n\in\N$ and  $1\le k\le r_n$ there exists
$$ e_{n,c}(k)\ge P\big(  \|X_k\|>cu_n\,\big|\, \|X_0\|>cu_n\big) $$
such that $e_{\infty,c}(k)=\lim_{n\to\infty} e_{n,c}(k)$ exists for all $k\in\N$ and $\lim_{n\to\infty} \sum_{k=1}^{r_n}  e_{n,c}(k) = \sum_{k=1}^\infty e_{\infty,c}(k)<\infty$ holds.
\item[\bf(TC)] $\displaystyle	\lim_{m\to\infty}\limsup_{n\to\infty}E\Big( \frac{\sum_{m<|h|\leq s_n} \|X_{h+j}\|^\alpha\Ind{\|X_{h+j}\|\leq cu_n} }{ \sum_{|h|\leq s_n} \|X_{h+j}\|^\alpha}\,\big|\, \|X_0\|>u_n\Big)=0$\\ \hspace*{8cm} for some $c\in (0,1)$ and all $j\in\mathbb{Z}$.
\end{itemize}
By stationarity, Condition (BC) implies the analogous inequality for $k<0$ with $e_{n,c}(k):=e_{n,c}(|k|)$, and it also implies a bound on the second moment of the cluster size. (TC) enables us to truncate clusters to finite lengths  in our asymptotic analysis.

Moreover, the following condition guarantees that $p_A:=\pA$ can be calculated as a limit using the definition of the spectral tail process:
\begin{itemize}
	\item[\bf(C$\mathbf{\Theta}$)] $P\{\Theta_i\in \partial A\}=0$ for all $A\in\AA$.
\end{itemize}	

Our first result states the joint asymptotic normality of finitely many estimators $\pnAh$. Define the process of standardized estimators
\begin{equation}
  \Znp(A) := \sqrt{nv_n}\big(\pnAh-E(g_A(W_{n,0})\mid \|X_0\|>u_n)\big), \quad A\in\AA.
\end{equation}
Since the functions $g_A$ are bounded by 1, the weak convergence \eqref{eq:tailprocess} implies
\begin{align}\label{eq:ewertconv}
E(g_A(W_{n,0})\mid \|X_0\|>u_n) \to E\Big( \|\Theta\|_\alpha^{-\alpha}\sum_{h\in\Z} \|\Theta_h\|^\alpha \ind{A}(\Theta_{h+i}/\|\Theta_h\|)\Big) = \pA,
\end{align}
where the last step holds by the invariance of the spectral tail process under the RS-transformation; see Supplement for details.
\begin{theorem}
	\label{th:fidiconv}
	If the conditions (RV), (S), (BC), (TC) and (C$\Theta$) are met, then the finite dimensional distributions (fidis) of $\Znp$ weakly converge to the fidis of $\Zp(A):= Z(A)-\pA Z(\R^d)$ with $Z$ denoting a centered Gaussian process with covariance function
\begin{align}
  c(A,B) & :=Cov(Z(A),Z(B))\\ & = 
  \sum_{j\in\Z} E\bigg[(\|\Theta_j\|^\alpha\wedge 1) 
  \bigg(\sum_{h\in\Z}\frac{\|\Theta_h\|^\alpha}{\|\Theta\|_\alpha^\alpha} \ind{A}\Big(\frac{\Theta_{h+i}}{\|\Theta_h\|}\Big)\bigg)
  \bigg(\sum_{h\in\Z}\frac{\|\Theta_h\|^\alpha}{\|\Theta\|_\alpha^\alpha}  \ind{B}\Big(\frac{\Theta_{h+i}}{\|\Theta_h\|}\Big) \bigg)
\bigg].  \label{eq:covZ}
\end{align}
 Hence, if, in addition, the bias condition
	\begin{equation}
		\label{eq:biascondition.gA}
	  E(g_A(W_{n,0})\mid \|X_0\|>u_n)-\pA =\ord\big((nv_n)^{-1/2})
	\end{equation}
	holds for all $A\in\AA$, then
$$ 	\sqrt{nv_n}\big(\pnAh-\pA\big)_{A\in\tilde\AA} \,\to\, (\Zp(A))_{A\in\tilde\AA}
$$
weakly for all finite subsets $\tilde\AA$ of $\AA$.
\end{theorem}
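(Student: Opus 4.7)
The plan is to combine the central limit theorem for sliding block statistics from \cite{DN20} with a ratio linearization, exploiting the fact that $\pnAh = T_{n,A}/T_{n,\R^d}$.

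\textbf{Step 1 (Sliding blocks CLT for the numerators).} First I would verify that the block functions $g_A$, indexed by $A\in\tilde\AA\cup\{\R^d\}$, fit the framework of \cite{DN20}. Each $g_A$ is uniformly bounded by $1$, and it vanishes unless $\|w_0\|>1$, so the summands in $T_{n,A}$ contribute only on the rare event $\{\|X_t\|>u_n\}$. Condition (S) provides the separation $s_n\le l_n=\ord(r_n)=\ord(\sqrt{nv_n})$ and the mixing rate $\beta_{l_n}=\ord(r_n/n)$ needed to run the standard blocks-and-gaps/Bernstein decomposition, (BC) controls the second moment of the number of exceedances inside an $r_n$-block, and (TC) lets one truncate the denominator $\sum_{|h|\le s_n}\|X_{t+h}\|^\alpha$ to a finite window. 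Applying \cite{DN20} then gives the joint weak convergence
\begin{equation}
 \Big((nv_n)^{-1/2}\bigl(T_{n,A}-E[T_{n,A}]\bigr)\Big)_{A\in\tilde\AA\cup\{\R^d\}}\;\longrightarrow\;(Z(A))_{A\in\tilde\AA\cup\{\R^d\}}
\end{equation}
to a centered Gaussian vector.

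\textbf{Step 2 (Identification of the covariance).} The framework of \cite{DN20} produces the limit variance/covariance as
\begin{equation}
 c(A,B)=\lim_{n\to\infty}\frac{1}{v_n}\sum_{|j|\le r_n} E\bigl[g_A(W_{n,0})g_B(W_{n,j})\bigr],
\end{equation}
after absorbing the negligible term $E[g_A(W_{n,0})]E[g_B(W_{n,0})]/v_n=\Ord(v_n)$. For fixed $j$, conditioning on $\|X_0\|>u_n$ and using \eqref{eq:tailprocess} replaces $(X_h/u_n)_{|h|\le s_n}$ by the tail process $Y=\|Y_0\|\Theta$. Since $g_A$ is scale-invariant on $\{\|w_0\|>1\}$, the $\|Y_0\|$-factor cancels and $g_A(W_{n,0})\to G_A(\Theta):=\sum_{h}(\|\Theta_h\|^\alpha/\|\Theta\|_\alpha^\alpha)\ind{A}(\Theta_{h+i}/\|\Theta_h\|)$. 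For $g_B(W_{n,j})$ the analogous argument, followed by the shift/scale invariance $G_B(T^j\Theta)=G_B(\Theta)$ (which is precisely the RS-invariance of $P^\Theta$ stated in \cite{janssen2019}), yields the limit $\Ind{\|Y_j\|>1}G_B(\Theta)$. Integrating out the Pareto-distributed $\|Y_0\|$ (independent of $\Theta$) gives $E[\Ind{\|Y_j\|>1}\mid\Theta]=\|\Theta_j\|^\alpha\wedge 1$, so the $j$-th summand tends to $E[(\|\Theta_j\|^\alpha\wedge 1)G_A(\Theta)G_B(\Theta)]$. Interchanging limit and summation is justified by (BC) (which makes $\sum_j E[\|\Theta_j\|^\alpha\wedge 1]<\infty$) together with (TC). This yields \eqref{eq:covZ}.

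\textbf{Step 3 (Ratio and conclusion).} Applying Step 1 with $A=\R^d$, together with $E[T_{n,\R^d}]=nv_n$, gives $T_{n,\R^d}/(nv_n)\to 1$ in probability. Writing $a_n:=E[g_A(W_{n,0})\mid\|X_0\|>u_n]$, so that $E[T_{n,A}]=nv_n a_n$, one has
\begin{equation}
 \Znp(A)=\frac{1}{T_{n,\R^d}/(nv_n)}\cdot\frac{(T_{n,A}-E[T_{n,A}])-a_n(T_{n,\R^d}-E[T_{n,\R^d}])}{\sqrt{nv_n}}.
\end{equation}
By \eqref{eq:ewertconv}, $a_n\to\pA$, and Slutsky's lemma combined with Step 1 gives the fidi convergence $\Znp(A)\to Z(A)-\pA Z(\R^d)=\Zp(A)$. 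Finally, under the bias condition \eqref{eq:biascondition.gA}, the centering $a_n$ may be replaced by $\pA$ at the cost of an $\ord(1)$ term, yielding the stated convergence of $\sqrt{nv_n}(\pnAh-\pA)$.

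\textbf{Main obstacle.} The principal technical difficulty lies in Step 2: while \cite{DN20} delivers the CLT essentially for free once the assumptions are verified, recognizing the covariance produced by their formula as the clean expression \eqref{eq:covZ} requires a careful joint analysis of two overlapping sliding blocks $W_{n,0}$ and $W_{n,j}$, a passage to the tail process under (TC), and the crucial use of the RS-invariance of $P^\Theta$ to collapse shifted/scaled versions of $G_A$ back onto $G_A(\Theta)$ itself; this is what makes the double sum over positions inside the two blocks reduce to a single sum over $j\in\Z$ weighted by $\|\Theta_j\|^\alpha\wedge 1$.
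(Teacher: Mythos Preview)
Your proposal is essentially correct and follows the paper's approach: apply the sliding-blocks CLT of \cite{DN20} to $(T_{n,A})_{A\in\tilde\AA\cup\{\R^d\}}$, identify the limiting covariance, and then use the ratio linearization together with \eqref{eq:ewertconv} and Slutsky. The paper organizes Step~1 and Step~2 into a separate proposition (convergence of $\Ztn$) and a covariance lemma, and uses Pratt's lemma with (BC) for the interchange of limit and sum, exactly as you outline.

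One conceptual point deserves correction. In Step~2 you attribute the collapse $G_B(T^j\Theta)=G_B(\Theta)$ to ``the RS-invariance of $P^\Theta$''. This is not the right reason: the identity is a \emph{pathwise} fact, valid for any sequence in $l_\alpha$, because $G_B(\theta)=\sum_{h\in\Z}\|\theta_h\|^\alpha\|\theta\|_\alpha^{-\alpha}\ind{B}(\theta_{h+i}/\|\theta_h\|)$ is manifestly invariant under shifts (by reindexing the sum) and under scaling (ratios everywhere). The paper simply relabels $k=j+h$ at this step. RS-invariance of $P^\Theta$ is a \emph{distributional} statement; in this proof it enters only through \eqref{eq:ewertconv}, to identify $E[G_A(\Theta)]$ with $\pA$, not in the covariance computation. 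Your argument still works, but the ``main obstacle'' is not RS-invariance; it is rather (a) the passage from the growing window $|h|\le s_n$ to the full tail process, which requires (TC) since \eqref{eq:tailprocess} only gives convergence of finite-dimensional blocks, and (b) the interchange of limit and the sum over $j$, handled via (BC) and Pratt's lemma.
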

Note that \eqref{eq:biascondition.gA} only imposes a rate, while convergence is ensured by \eqref{eq:ewertconv}. Straightforward calculations show that the covariance function of the limit process $Z^{(p)}$ can be represented as follows:
\begin{align}
  Cov\big(Z^{(p)}(A),Z^{(p)}(B)\big) & = E[\chi(\xi(A)-p_A)(\xi(B)-p_B)]
\end{align}
with
\begin{align}
  \chi & :=  \sum_{j\in\Z} (\|\Theta_j\|^\alpha\wedge 1), \qquad 
  \xi  := \sum_{h\in\Z} \frac{\|\Theta_h\|^\alpha}{\|\Theta\|_\alpha^\alpha} \delta_{\Theta_{h+i}/\|\Theta_h\|}.
\end{align}
Note that $\xi$ is a random probability measure with expectation $E[\xi(A)]=(P^{\Theta})^{RS}\{x\in(\R^d)^{\Z}\mid x_i\in A\}=p_A$, whose realizations only depend on the shape of the paths of the spectral tail process, in the sense that they are invariant under a shift and re-scaling of the spectral tail process. In particular, $\xi$  is deterministic if the spectral tail process is the RS-transform of some deterministic process. In that case, the asymptotic variances are all equal to 0 although the spectral tail process may be non-trivial. The expected value of the random weight $\chi$ equals the expected number of values in the tail process with norm larger than 1. It reflects the fact that longer clusters of large observations contribute more summands to the estimator $\pnAh$ than shorter clusters do.


The above convergence will only hold uniformly over all $A\in\AA$ if the family of sets is not too complex. Such a process convergence in the space of bounded functions indexed by $\AA$ (equipped with the supremum norm) is relatively easy to establish using Vapnik-Chervonenkis theory if the sets under consideration are linearly ordered w.r.t.\ inclusion. However, this assumption is too restrictive if multivariate data is observed, i.e.\ if $d>1$. Here we assume that $\AA$ may be indexed by a unit cube of arbitrary dimension in a suitable way:
\begin{itemize}
	\item[\bf(A)] For some $q\in\N$, there exists a map $[0,1]^q\to \AA, t\mapsto A_t$ such that
  \begin{itemize}
     \item[(i)] $\AA=\{A_t|t\in[0,1]^q\}$, $A_{(1,\ldots,1)}=\R^d$ and $A_{(t_1,\ldots, t_q)}=\emptyset$ if $t_j=0$ for some $1\le j\le q$;
     \item[(ii)] for all $1\le j,k\le q$, and all $s_j,t_l\in[0,1]$, ($l\in\{1,\ldots,q\}\setminus \{k\}$)  the mapping $t_k\mapsto A_{(t_1,\ldots, t_q)}\setminus A_{(t_1,\ldots,t_{j-1},s_j,t_{j+1},\ldots,t_q)}$ is non-decreasing on $[0,1]$;
     \item[(iii)] the processes $\big(\sum_{j=1}^{r_n} g_{A_t}(W_{n,j})\big)_{t\in[0,1]^q}$ are separable;
     \item[(iv)] $P\{\Theta_i\in \partial A_t^-\}=0$ for all $t\in[0,1+\iota]^q$ for some $\iota>0$ where $A_t^-:= \bigcup_{s\in[0,t)}A_s$ and $A_t:=A_{t\wedge 1}$ for $t\not\in[0,1]^q$;
     \item[(v)] $P\big\{ \Theta_i \in \bigcap_{s\in(t,1]}A_{s^{(k)}}\setminus A_{t^{(k)}}\}=0$ for all $t\in[0,1)$ and $1\leq k\leq q$ where $t^{(k)}:=(1,\ldots,1,t,1,\ldots,1)$ with $t$ in the $k$-th coordinate;
     \item[(vi)]$P\big\{\|X_0\|>0,X_i/\|X_0\|\in \bigcap_{s\in(t,1]}A_{s^{(k)}}\setminus A_{t^{(k)}}\}=0$ for all $t\in[0,1)$ and $1\leq k\leq q$;
     \item[(vii)] there exists $w\in[0,1]^q$ such that $0\in A_w\setminus\bigcup_{s\lneq w} A_s$.
    \end{itemize}
\end{itemize}

The archetypical example is $\AA=\{(-\infty,t]\mid t\in\R^d\}\cup \{\mathbb{R}^d,\emptyset\}$ for which condition (A) is satisfied, provided the marginal distributions of $\Theta_i$ and $X_i/\|X_0\|$ are continuous.  To see this, let $q=d$ and $A_{(t_1,\ldots,t_d)} := \times_{i=1}^d (-\infty, \varpi(t_i)]\cap\R^d$ for some continuous, increasing mapping $\varpi$ from $[0,1]$ onto $[-\infty,\infty]$ and observe $\bigcap_{s\in(t,1]}A_{s^{(k)}}\setminus A_{t^{(k)}}=\emptyset$ for all $t\in[0,1)$ and $1\leq k\leq d$. Indeed, the above conditions cover almost all natural finite-dimensional families of sets if the distributions of $\Theta_i$ and $X_i/\|X_0\|$ are sufficiently smooth.

\begin{theorem}
	\label{th:procconv}
	If the conditions (RV), (S), (BC), (TC), (C$\Theta$) and (A) are fulfilled, then $\Znp$ weakly converge to the Gaussian process $\Zp$ defined in Theorem \ref{th:fidiconv}. If, in addition,
   \begin{equation} \label{eq:unifbiascond}
	  \sup_{A\in\AA}\big|E(g_A(W_{n,0})\mid \|X_0\|>u_n)-P\{\Theta_i\in A\}\big| =\ord\big((nv_n)^{-1/2}),
	\end{equation}
then \hfill
   $\displaystyle
       \sqrt{nv_n}\big(\pnAh-P\{\Theta_i\in A\}\big)_{A\in\AA} \,\to\, (\Zp(A))_{A\in\AA}.
  $ \hspace*{\fill}
\end{theorem}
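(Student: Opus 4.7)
The strategy is to upgrade the finite-dimensional convergence from Theorem \ref{th:fidiconv} to functional weak convergence in $\ell^\infty(\AA)$ and then to remove the centering $E[g_A(W_{n,0})\mid \|X_0\|>u_n]$ using the uniform bias bound \eqref{eq:unifbiascond}. Since $\pnAh=T_{n,A}/T_{n,\R^d}$ and the denominator satisfies $T_{n,\R^d}/(nv_n)\to 1$ in probability (this is just the $A=\R^d$ fidi from Theorem \ref{th:fidiconv}), it suffices to prove weak convergence in $\ell^\infty(\AA)$ of the centered process $A\mapsto \sqrt{nv_n}\bigl(T_{n,A}/(nv_n)-E[g_A(W_{n,0})\mid \|X_0\|>u_n]\bigr)$ and then invoke continuous mapping for the ratio map $(f,c)\mapsto f/c$.

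For the remaining asymptotic tightness I plan to invoke the sliding-blocks framework of \cite{DN20}: split $T_{n,A}=\sum_{t=1}^n g_A(W_{n,t})$ into blocks of length $r_n$ separated by gaps of length $l_n$, use $\beta_{l_n}=\ord(r_n/n)$ from (S) to couple each block sum to an independent copy at negligible cost, and exploit (BC) for variance bounds together with (TC) to truncate cluster tails uniformly in $A$. Condition (C$\Theta$) identifies the limiting mean \eqref{eq:ewertconv}, and the invariance of $P^\Theta$ under the RS-transformation together with the explicit form of $g_A$ yields the covariance function \eqref{eq:covZ}.

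The main obstacle is stochastic equicontinuity uniformly over $\AA$. I plan to exploit the parameterization $t\mapsto A_t$ on $[0,1]^q$ from (A): by the coordinatewise monotonicity in (A)(ii), whenever $s\le t$ in $[0,1]^q$ the increment $|g_{A_t}(W_{n,0})-g_{A_s}(W_{n,0})|$ is dominated by a nonnegative bracketing statistic that tracks the mass $g$ places on the strip $A_t\setminus A_s$, so a chaining argument on a rectangular grid $G_m\subset[0,1]^q$ of mesh $1/m$ reduces the total oscillation to the probability masses these strips carry at grid edges. Conditions (A)(iv)--(vi) are tailored to guarantee that $\Theta_i$ and $X_i/\|X_0\|$ place no mass on the singular sets $\bigcap_{s\in(t,1]}A_{s^{(k)}}\setminus A_{t^{(k)}}$ and on $\partial A_t^-$, so the grid oscillations vanish as $m\to\infty$ after $n\to\infty$; condition (A)(vii) isolates the special contribution of the origin coming from the $\ind{A}(0)$ terms in $g_A$. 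Combining the maximal inequality of \cite{DN20} for the almost independent block sums with this chaining establishes the required equicontinuity; the same arguments show continuity of $c(\cdot,\cdot)$ in its intrinsic pseudometric, so the limit $\Zp$ has a continuous version, and the conclusion is obtained by assembling fidi convergence, equicontinuity, the continuous mapping argument for the ratio, and \eqref{eq:unifbiascond}.
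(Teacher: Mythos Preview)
Your proposal is correct and follows essentially the same route as the paper: reduce to process convergence of $\Ztn$ via the ratio/continuous mapping argument, then verify the conditions of Theorem~2.3(i) in \cite{DN20} using the monotone parametrization in (A). The paper makes your chaining sketch concrete by introducing the explicit semi-metric $\rho(A,B)=\sum_{j}E[\Ind{\|Y_j\|>1}\sum_h \|Y_h\|^\alpha\|Y\|_\alpha^{-\alpha}|\ind{A}-\ind{B}|(Y_{h+i}/\|Y_h\|)]$, showing $\GG$ is totally bounded for it, and then checking the (D1) equicontinuity and (D2) bracketing-entropy conditions of \cite{DN20} via the same rectangular partitions you describe; (A)(vi)--(vii) enter exactly where you anticipate, to control the $\ind{A}(0)$ contribution in the bracket bounds.
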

\begin{rem}
	\label{rem:processcon.linear}
  In the case $q=1$, assumptions (A) (vi) and (vii) are not needed.
\end{rem}
Next we investigate the uniform asymptotic normality of our estimator $\pnAhh$ in the case when the index of regular variation is unknown. To analyze the joint asymptotic behavior of the denominator of the Hill type estimator $\hat\alpha_n$ and the statistics $T_{n,A}$, we  amend the cluster size condition (BC).  Since the logarithm function occurring in the definition of the Hill estimator is unbounded, further moment conditions are needed. In addition, we require the bias of the Hill type estimator to be negligible.
\begin{itemize}
	\item[\bf(BC')] For each $1\le k\le r_n$ there exists
$$ e'_n(k)\ge E\Big(\max\Big(\log\frac{\|X_0\|}{u_n},1\Big) \max\Big(\log\frac{\|X_k\|}{u_n},\Ind{\|X_k\|>u_n}\Big)\,\Big|\, \|X_0\|>u_n\Big) $$
such that $e'_{\infty}(k)=\lim_{n\to\infty} e'_{n}(k)$ exists for all $k\in\N$ and $\lim_{n\to\infty} \sum_{k=1}^{r_n}  e'_{n}(k) = \sum_{k=1}^\infty e'_{\infty}(k)<\infty$ holds.\\[-1ex]

   \item[\bf(M)]  There exists $\delta>0$ such that the following moment bounds hold:
     \begin{itemize}
       \item[(i)]
       $\displaystyle \sum_{k=1}^{r_n} \bigg[E\bigg( \Big( \log^+\Big(\frac{\|X_0\|}{u_n}\Big) \log^+\Big(\frac{\|X_k\|}{u_n}\Big)\Big)^{1+\delta}\,\Big|\, \|X_0\|>u_n\bigg)\bigg]^{1/(1+\delta)}=\Ord(1),
       $
       \item[(ii)]
         $\displaystyle \limsup_{m\to\infty}E\bigg(\frac{ \sum_{|h|\leq m} |\log\|\Theta_{h}\||^{1+\delta}\|\Theta_{h}\|^\alpha}{\sum_{|k|\leq m}\|\Theta_k\|^\alpha}\bigg)<\infty,
         $
        \item[(iii)] $\displaystyle \lim_{m\to\infty}\limsup_{n\to\infty} E\left(\frac{\sum_{m<|h|\leq s_n} \log^{-}(\|X_{h}\|/u_n)\|X_{h}\|^{\alpha} }{\sum_{|k|\leq s_n} \|X_{k}\|^{\alpha }}  \, \Big|\, \|X_0\|>u_n\right)=0.$
     \end{itemize}

   \item[({\bf H})]
   $\displaystyle E\bigg( \log\Big(\frac{\|X_0\|}{u_n}\Big)\,\Big|\, \|X_0\|>u_n\bigg)-\frac 1\alpha=\ord\big((nv_n)^{-1/2}\big)
   $
\end{itemize}
\begin{theorem}
	\label{th:procconvunknownalpha}
	If the conditions (RV), (S), (BC), (TC), (C$\Theta$), (A), (BC'), (M), (H) and \eqref{eq:unifbiascond} are satisfied and $\log^4 n=\hbox{o}\left(nv_n\right)$, then
   \begin{equation}
       \sqrt{nv_n}\big(\pnAhh-P\{\Theta_i\in A\}\big)_{A\in\AA} \,\to\, \big(Z(A)-(P\{\Theta_i\in A\}-\alpha d_A)Z(\R^d)-\alpha^2 d_A\Zphi\big)_{A\in\AA}
   \end{equation}
   weakly. Here
   $$ d_A :=-\sum_{k\in\Z} E\big[\|\Theta\|_\alpha^{-\alpha}\|\Theta_k\|^\alpha\log \|\Theta_k\|\ind{A}(\Theta_{i})\big]\in\R
   $$
   and
   $\big((Z(A))_{A\in\AA},\Zphi\big)$ is a centered Gaussian process whose covariance function is given by \eqref{eq:covZ} and
   \begin{align}
     Cov(Z(A),\Zphi) & = \sum_{h\in\mathbb{Z}\sum_{k\in\mathbb{Z}}} E\bigg[ \frac{\|\Theta_h\|^\alpha}{\|\Theta\|_\alpha^\alpha} \ind{A}\Big(\frac{\Theta_{h+i}}{\|\Theta_h\|}\Big) \big(\|\Theta_k\|^\alpha\wedge 1\big)\Big(\alpha^{-1}+\log^+\|\Theta_k\|\Big)\bigg],\\
     Var(Z_\phi) & = \alpha^{-1}\sum_{k\in\mathbb{Z}} E\Big[\big(\|\Theta_k\|^\alpha\wedge 1\big)\big(2\alpha^{-1}+\big|\log\|\Theta_k\|\big|\big)\Big].
   \end{align}

\end{theorem}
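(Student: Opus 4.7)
The plan is to reduce Theorem~\ref{th:procconvunknownalpha} to Theorem~\ref{th:procconv} via a first-order Taylor expansion of $\pnAhh$ in the exponent, combined with a joint uniform CLT for the projection statistics and the Hill-type denominator. Writing $g_A^{(\beta)}$ for $g_A$ with $\alpha$ replaced by $\beta>0$ and noting that $T_{n,\R^d}=\sum_t\Ind{\|X_t\|>u_n}$ does not depend on $\beta$, I decompose
\begin{align}
  \sqrt{nv_n}\bigl(\pnAhh-\pA\bigr) = \sqrt{nv_n}\bigl(\pnAh-\pA\bigr) + \frac{nv_n}{T_{n,\R^d}}\cdot\frac{1}{\sqrt{nv_n}}\sum_{t=1}^n\bigl(g_A^{(\alphanh)}-g_A^{(\alpha)}\bigr)(W_{n,t}).
\end{align}
The first term is handled by Theorem~\ref{th:procconv} and contributes $Z(A)-\pA Z(\R^d)$ uniformly in $A\in\AA$; it remains to analyse the second.

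\emph{Joint convergence with the Hill statistic.} Set $\phi(w):=\log^+\|w_0\|$, so that $1/\alphanh=\sum_t\phi(W_{n,t})/T_{n,\R^d}$. I would extend the application of the \DN\ uniform CLT underlying Theorem~\ref{th:procconv} to the enlarged family $\{g_A:A\in\AA\}\cup\{\phi\}\cup\{\Ind{\|\cdot\|>1}\}$, obtaining joint weak convergence to $((Z(A))_{A\in\AA},\Zphi,Z(\R^d))$. The unbounded log-type summand $\phi$ is what (BC'), (M)(i) and $\log^4 n=\ord(nv_n)$ are tailored for: (BC') supplies the two-point tail moment that drives the clusterwise covariance involving $\phi$, (M)(i) provides the $L^{1+\delta}$-bound used in the blocking/bracketing step of \DN, and $\log^4 n=\ord(nv_n)$ yields the Lindeberg condition after truncating $\phi(W_{n,t})\le\log n$. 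Condition (H) makes the centring $\sum_t\phi(W_{n,t})/(nv_n)\approx\alpha^{-1}$ unbiased at rate $\ord((nv_n)^{-1/2})$. A delta-method calculation for $\alphanh^{-1}=\sum_t\phi(W_{n,t})/T_{n,\R^d}$ then gives $\sqrt{nv_n}(\alphanh-\alpha)\to\alpha Z(\R^d)-\alpha^2\Zphi$.

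\emph{Linearisation and identification of $d_A$.} By the mean value theorem $g_A^{(\alphanh)}-g_A^{(\alpha)}=(\alphanh-\alpha)\,\dot g_A^{(\bar\alpha_n)}$ for some $\bar\alpha_n$ between $\alpha$ and $\alphanh$, and differentiating the ratio in the definition of $g_A^{(\beta)}$ yields
\begin{align}
  \dot g_A^{(\beta)}(w)=\Ind{\|w_0\|>1}\biggl[\sum_h\frac{\|w_h\|^\beta\log\|w_h\|}{\sum_l\|w_l\|^\beta}\varphi_{h,A}(w)-g_A^{(\beta)}(w)\sum_l\frac{\|w_l\|^\beta\log\|w_l\|}{\sum_l\|w_l\|^\beta}\biggr],
\end{align}
where $\varphi_{h,A}(w):=\ind{A}(w_{h+i}/\|w_h\|)\Ind{|h+i|\le s_n}+\ind{A}(0)\Ind{|h+i|>s_n}$. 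On $\{\|X_0\|>u_n\}$ the decomposition $\log\|w_h\|=\log\|Y_0\|+\log\|\Theta_h\|$ produces an ancillary $\log\|Y_0\|$-factor in both inner sums which \emph{cancels} between them; conditions (M)(ii)--(iii) and (TC) then truncate the cluster sums and supply the requisite uniform integrability, while the continuity of the moment bounds in (M) under small perturbations of the exponent justifies replacing $\bar\alpha_n$ by $\alpha$ up to $\ord_P(1)$. This yields, uniformly in $A\in\AA$ (uniformity coming from condition (A), exactly as in the proof of Theorem~\ref{th:procconv}),
\begin{align}
  \frac{1}{T_{n,\R^d}}\sum_{t=1}^n\dot g_A^{(\bar\alpha_n)}(W_{n,t}) \,\to\, E\biggl[\sum_h\frac{\|\Theta_h\|^\alpha\log\|\Theta_h\|}{\|\Theta\|_\alpha^\alpha}\ind{A}\Bigl(\tfrac{\Theta_{h+i}}{\|\Theta_h\|}\Bigr)-g_A(\Theta)\sum_l\frac{\|\Theta_l\|^\alpha\log\|\Theta_l\|}{\|\Theta\|_\alpha^\alpha}\biggr]
\end{align}
in probability. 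Applying the RS-invariance of $P^\Theta$ to the test function $F((y_t)_t):=\bigl(\|y_k\|^\alpha\log\|y_k\|/\|y\|_\alpha^\alpha\bigr)\ind{A}(y_i)$ and summing over $k\in\Z$ rewrites this right-hand side as $d_A$.

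\emph{Assembly and main obstacle.} Combining the three pieces,
\begin{align}
  \sqrt{nv_n}\bigl(\pnAhh-\pA\bigr)=\bigl(Z(A)-\pA Z(\R^d)\bigr)+d_A\bigl(\alpha Z(\R^d)-\alpha^2\Zphi\bigr)+\ord_P(1)
\end{align}
uniformly in $A\in\AA$, which rearranges to the claimed limit $Z(A)-(\pA-\alpha d_A)Z(\R^d)-\alpha^2 d_A\Zphi$; the covariance formulas are read off from those of the enlarged joint limit in Step~2. The main obstacle is the enlargement of the \DN\ uniform CLT to include the unbounded log-summand $\phi$ alongside the set-indexed family $\{g_A:A\in\AA\}$: one has to re-verify the clusterwise variance identity and the Lindeberg condition for an unbounded function, which is precisely what (BC'), (M)(i) and $\log^4 n=\ord(nv_n)$ are designed for. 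A secondary, but non-trivial, technical point is showing that the Taylor remainder is $\ord_P(\sqrt{nv_n})$ uniformly in $A\in\AA$, which relies on $\alphanh-\alpha=\Ord_P((nv_n)^{-1/2})$ from the preceding step together with the exponent-stability of the moment bounds in (M).
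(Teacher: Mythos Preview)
Your outline matches the paper's proof closely: the same decomposition $\pnAhh-\pA=(\pnAh-\pA)+(\pnAhh-\pnAh)$, the same enlargement of the \DN\ CLT to include the unbounded $\phi$ (yielding $\sqrt{nv_n}(\alphanh-\alpha)\to\alpha Z(\R^d)-\alpha^2\Zphi$), and a Taylor expansion in the exponent to extract the factor $d_A(\alphanh-\alpha)$. One attribution needs correction, and it matters because it is where your argument is weakest.

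You place the condition $\log^4 n=\ord(nv_n)$ at the Lindeberg step for $\phi$. In the paper, the Lindeberg-type condition for $\phi$ follows from (BC'), (M)(i) and $r_n=\ord(\sqrt{nv_n})$ alone, via H\"{o}lder and Markov inequalities; no truncation at $\log n$ is used there. Instead, $\log^4 n=\ord(nv_n)$ is precisely what controls the Taylor remainder---the step you describe as ``replacing $\bar\alpha_n$ by $\alpha$''. The paper carries out a \emph{second}-order expansion at $\alpha$ and bounds the second-derivative term uniformly in the random exponent $\bar\alpha$ by a deterministic optimization lemma: for $a\in[0,1]^m$, both $\sum_k a_k\log^2 a_k/(1+\sum_k a_k)$ and $\bigl(\sum_k a_k|\log a_k|/(1+\sum_k a_k)\bigr)$ are $\Ord(\log^2 m)$ and $\Ord(\log m)$ respectively. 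Combined with the high-probability event $\max_{1\le t\le n}\|X_t\|/u_n\le n^{2/\alpha}$ (so that $\log^+\|X_{n,t}\|\le(2/\alpha)\log n$), this yields a bound of order $(\alphanh-\alpha)^2\cdot\Ord_P(\log^2 n)=\Ord_P((nv_n)^{-1}\log^2 n)$, which is $\ord_P((nv_n)^{-1/2})$ exactly when $\log^4 n=\ord(nv_n)$. Your appeal to ``exponent-stability of the moment bounds in (M)'' does not obviously deliver this: conditions (M)(ii)--(iii) are stated only at the fixed exponent $\alpha$, while $\bar\alpha_n$ is random, so some argument independent of the value of the exponent---like the deterministic lemma above---is needed.
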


\begin{rem} \label{rem:cond}
	\begin{itemize}
		\item[(a)] Condition (M) (ii) is equivalent to
		\begin{align}
			\label{eq:condMiiivar}
			\limsup_{m\to\infty}E\bigg(\frac{ \sum_{|h|\leq m} |\log\|Y_{h}\||^{1+\delta}\|Y_{h}\|^\alpha}{\sum_{|k|\leq m}\|Y_k\|^\alpha}\bigg)<\infty.
		\end{align}
		
		\item[(b)] Condition (M) (ii) is implied by the following condition, which may sometimes be easier to check:
	\begin{itemize}
		\item[\bf(M')] There exists $\delta'>\delta$ such that the following moment bounds hold: \begin{itemize}
			\item[(i)] $\displaystyle E\bigg( \sup_{|h|\leq s_n}\Big(\log^+\Big(\frac{\|X_{h}\|}{u_n}\Big)\Big)^{1+\delta'}\,\Big|\, \|X_0\|>u_n\bigg)=\Ord(1),$
		\item[(ii)] $\displaystyle \sum_{|h|\leq s_n}E\bigg( \Big(\log^-\Big(\frac{\|X_{h}\|}{u_n}\Big)\Big)^{1+\delta'}\Big(\frac{\|X_h\|}{u_n}\Big)^\alpha\,\Big|\, \|X_0\|>u_n\bigg)=\Ord(1).$
	\end{itemize}
	\end{itemize}
\nopagebreak
	Moreover, Condition (TC) is implied by (M') (ii); see Supplement for the proofs.
	\end{itemize}
\end{rem}

In general, neither $\pnAhh$ nor $\pnbh$ nor $\pnfh$ has uniformly smallest asymptotic variance, as is illustrated in the next example.
\begin{example} \label{ex:var_comp}
 Define a process $W=(W_t)_{t\in\mathbb{Z}}$  by
\begin{align}
	&P\{W_0=a^{-1},W_1=-1,W_t=0,\,\forall t\notin\{0,1\}\}=p,\\
	&P\{W_0=1,W_1=b^{-1},W_t=0,\,\forall t\notin\{0,1\}\}=1-p,
\end{align}
for some $a> 1$, $b> 1$ and $p=[0,1]$. Fix the index of regular variation to be  $\alpha=1$, and define $\Theta:=(\Theta_t)_{t\in\mathbb{Z}}$ such that $P^\Theta=(P^W)^{RS}$. Note that $\Theta$ has only  four possible realizations and that it vanishes for lags larger than 1 in absolute value. We say $W$ is the shape of $\Theta$, since standardizing a realization of $\Theta$ to sup-norm $1$ and shifting the first non-zero observation to time $0$ gives $W$.

We want to estimate the probability $P\{\Theta_i \in A\}$ for lags $i\in\{-1,0,1\}$ and half lines $A=(x,\infty)$ such that $P\{\Theta_i=x\}=0$. The asymptotic variance of $\pnAhh$ can be calculated using Theorem \ref{th:procconvunknownalpha} and the asymptotic variance of $\pnbh$ and $\pnfh$ can be obtained using Theorem 3.1 of \cite{davis2018}; see Supplement for details.

As shown in Figure \ref{figure:example.var}, where we fix $a=10, b=2$, each of the three estimators can have largest or smallest asymptotic variance depending on the model parameter $p$ (and $a,b$) and the probability we want to estimate (i.e.\ on $i$ and $x$). (Observe that for lag $i=0$ the backward estimator equals the forward estimator.)
For $p\to 0$ or $p\to 1$, the asymptotic variance of the estimator $\pnAh$ with known $\alpha$ (not shown in the plots) tends to 0 since the shape of the spectral tail process becomes deterministic. However, the asymptotic variance of $\pnAhh$ need not vanish due to the remaining variability of the Hill estimator.
\begin{figure}[h]
	\includegraphics[width=1\textwidth]{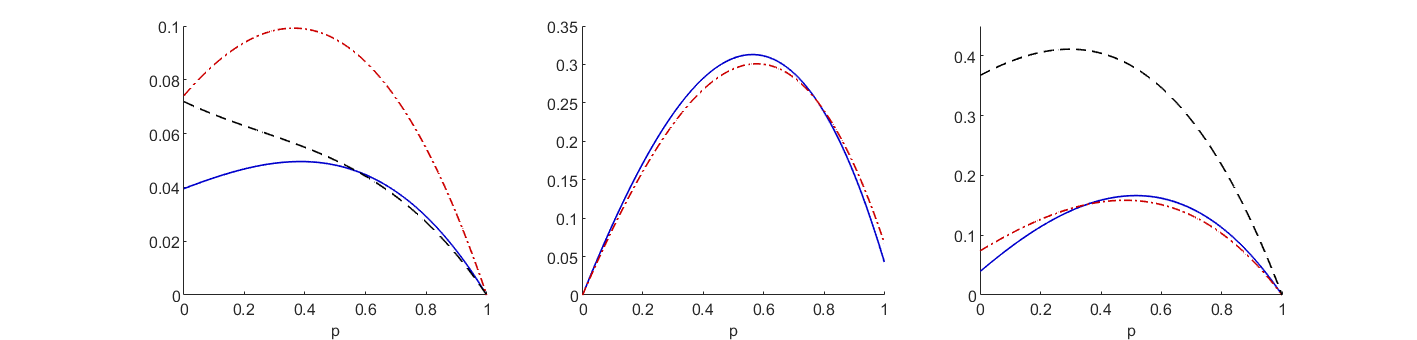}
	\caption{Variances of $\pnAhh$ (blue solid line), $\pnbh$ (black dashed line) and $\pnfh$ (red dashed-dotted line)   as a function of $p\in[0,1]$, for lag $i=-1$ and $A=(3/4,\infty)$ (left), $i=0$, $A=(1/20,\infty)$ (middle) and $i=1$,  $A=(1/20,\infty)$ (right).}
	\label{figure:example.var}
\end{figure}

\end{example}

\subsection{Solutions to stochastic recurrence equations}
\label{section:SRE}
In this subsection we discuss solutions to stochastic recurrence equations, which can e.g.\ be used to analyze GARCH time series (cf.\ \cite{basrak2002garch}). We derive conditions under which the previous theorems apply to this model.
   Consider a stationary solution to  the stochastic recurrence equation
   \begin{align} \label{eq:SREdef}
     X_t=C_t X_{t-1}+D_t, \quad t\in\Z,
   \end{align}
   where  $C_t$ are  random $d\times d$-matrices with non-negative entries and $D_t$ are $[0,\infty)^d$-valued random vectors such that $(C_t,D_t)$, $t\in\Z$, are i.i.d. In what follows, we denote both the Euclidean norm on $\R^d$ and the corresponding operator norm for $d\times d$-matrices by $\|\cdot\|$, and $(C,D)$ is a generic random variable with the same distribution as $(C_1,D_1)$.

   It is well known (\cite{BDM16}, Cor. 4.4.6) that under Condition (SRE) (see below) there exists an a.s.\ unique stationary solution to \eqref{eq:SREdef}, namely
   \begin{equation}
        X_t = \sum_{k=0}^\infty \Pi_{t-k+1,t}   D_{t-k}, \quad \Pi_{j,k}:=C_k\cdots C_j
   \end{equation}
   (with the convention $\Pi_{t+1,t}$ being equal to the identity matrix $I_d$)
   and this solution is regularly varying with index $\alpha$.
   \begin{itemize}
	\item[\bf(SRE)]
      \begin{itemize}
        \item[(i)] $\lim_{m\to\infty} m^{-1} E[\log\|\Pi_{1,m}\|]<0$;
        \item[(ii)] $\rho(s):=\lim_{m\to\infty} m^{-1}\log E[\|\Pi_{1,m}\|^s]\in (0,\infty)$ for some $s>0$;
        \item[(iii)] $E[\|C\|^\alpha\log^+\|C\|]<\infty$, $E[\|D\|^\alpha]<\infty$ with $\alpha$ denoting the unique solution to $\rho(\alpha)=0$;
        \item[(iv)] $P\{D=0\}<1$, $P\{(C_{jk})_{1\le k\le d} \ne 0\}=1, \;\forall\, 1\le j\le d$;
        \item[(v)] the additive subgroup generated by the logarithms of the spectral radii of arbitrary finite products of matrices in the support of $P^C$ is dense in $\R$.
   \end{itemize}
   \end{itemize}
   Observe that under conditions (i) and (ii) the existence of a unique zero of the function $\rho$ is guaranteed (\cite{BDM16}, Lemma 4.4.2). According to \cite{janssen2014}, Theorem 2.1 and Example 6.1, the forward spectral tail process  has the representation
         $\Theta_t=\Pi_{1,t}\Theta_0$, for all $t\ge 0$,
    with $\Theta_0$  independent of $(C_t)_{t\in\Z}$.

   We now discuss conditions that ensure that the limit Theorems \ref{th:fidiconv}, \ref{th:procconv} and \ref{th:procconvunknownalpha} are applicable for the family of sets $\AA=\{[0,x] \mid x\in[0,\infty)^d\}\cup \{[0,\infty)^d,\emptyset\}$, which is distribution determining for $\Theta_i$, $i\ge 0$. We are going to apply results for general Markovian time series established by \cite{KSW19}. To this end, we fix an arbitrary $q\in(0,\alpha)$ and verify Assumption 2.1 of this paper for $\mathbb{Y}:=X$, the functions $g(x)=\|x\|$ and $V(x)=\|x\|^q+1$ and $q_0=q$. Note that, in view of the above discussion, parts (i), (ii) and (v) of Assumption 2.1 are obviously fulfilled.

   If, in addition to (SRE), we assume the following set of conditions (SRE'), then, by Theorems 2.1 and 2.2 and Corollary 2.3 of \cite{Alsmeyer03} (see also \cite{BDM16}, Prop.\ 4.2.1) $(X_t)_{t\in\Z}$ is an aperiodic, positive Harris recurrent $P^{X_0}$-irreducible Feller process:
   \begin{itemize}
	\item[\bf(SRE')]
      \begin{itemize}
        \item[(i)] The interior of the support of $P^{X_0}$ is  not empty.
        \item[(ii)] There is a $\sigma$-finite non-null measure $\nu$ on $\R^d$ and an open set $E_0\in\B^d$ with $P\{X_0\in E_0\}>0$ such that $P^{Cx+D}$ has a non-trivial component that is absolutely continuous w.r.t.\ $\nu$ for all $x\in E_0$.
       \end{itemize}
    \end{itemize}
   Condition (ii) is e.g.\ fulfilled if $(C,D)$ is absolutely continuous w.r.t.\ the Lebesgue measure; see Lemma 4.2.2 of \cite{BDM16} for further sufficient conditions.
    By \cite{MT92}, Theorem 3.4, all compact subsets of $\R^d$ are petite sets, hence also small sets by Theorem 9.4.10 of \cite{douc2018}. Since $V^{-1}[0,u]$ is compact for all $u>0$, part (iv) of Assumption 2.1 of \cite{KSW19} is fulfilled with $m=1$.

   To verify part (iii), check that,  for all $t\ge 0$,
   \begin{equation} \label{eq:SRErec}
     X_t=\Pi_{1,t}X_0+R_t
   \end{equation}
    with $\Pi_{1,t}$ and $X_0$ independent, and $R_t:= \sum_{j=1}^t \Pi_{j+1,t}D_j$. Moreover,
   $\rho(q)<0$ for all $q\in (0,\alpha)$ by the H\"{o}lder inequality and $\rho(\alpha)=0$, so that
   \begin{equation} \label{eq:kappadef}
      \kappa:=E(\|\Pi_{1,m}\|^q)<1
   \end{equation}
    for some sufficiently large $m$. Since $(a+b)^q\le ((1+\eta)a)^q\Ind{b\le \eta a}+ ((1+\eta)b/\eta)^q \Ind{b> \eta a}$ for all $a,b\ge 0$ and $\eta>0$, we may conclude
    \begin{align}
      E(V(X_m)\mid X_0=y) & =  E\big[\|\Pi_{1,m}y+R_m\|^q\big]+1 \\
        & \le E\big[(\|\Pi_{1,m}\| \|y\|+\|R_m\|)^q\big]+1 \\
        & \le (1+\eta)^q \kappa (\|y\|^q+1) + ((1+\eta)/\eta)^q E[\|R_m\|^q]+1 \\
        & =: \tilde\beta V(y) + \tilde b  \label{eq:drift1}
    \end{align}
    with $\tilde \beta<1$ for $\eta$ sufficiently small. By \cite{douc2018}, Proposition 9.2.13 and Theorem 9.3.11, for suitable $M>0$, the set $V^{-1}[0,M]$ is accessible by the $m$-skeleton of the Markov chain.   Thus Theorem 14.1.4 of \cite{douc2018} shows that an analog to \eqref{eq:drift1} holds if $\tilde b$ is replaced with $b\ind{V^{-1}[0,\tilde M]}$ for some $\tilde M>0$ and possibly $\tilde \beta$ with some other constant $\beta\in(0,1)$. Since, by the same reasoning as above, $E(\|X_1\|^q\mid X_0=y)\le (1+\eta)^q E[\|C_1\|^q]\|y\|^q + ((1+\eta)/\eta)^q E[\|R_1\|^q]$, Lemma 2.1 of \cite{mikosch2014deviation} yields part (iii) of Assumption 2.1 of \cite{KSW19}.

    By regular variation of $\|X_0\|$, the remaining part (vi) of Assumption 2.1 of \cite{KSW19} follows if we show $E\big[\|X_0\|^q \Ind{\|X_0\|>u_n}\big] = O(u_n^q v_n)$. This, in turn, is an easy consequence of integration by parts and Karamata's theorem:
    \begin{align}
        E\big[\|X_0\|^q \Ind{\|X_0\|>u_n}\big] & = \int_{u_n}^\infty  z^q\, P^{\|X_0\|}(dz) \\
          & = -z^qP\{\|X_0\|>z\}\big|_{z=u_n}^\infty + q \int_{u_n}^\infty z^{q-1} P\{\|X_0\|>z\}\, dz \\
          & = u_n^q v_n + q \frac{u_n^q v_n}{\alpha-q}(1+\ord(1)).\label{eq:karamata}
    \end{align}

    The drift condition established above and irreducibility imply that the time series is geometrically $\beta$-mixing (see  \cite{KSW19} or \cite{mikosch2014deviation}, p.\ 161), i.e.\ there exist $\tau>0$ and $\sigma\in (0,1)$ such that $\beta_{k}\le \tau\sigma^k$ for all $k\in\N$. It is easy to check that condition (S) is met, provided $v_n=\ord(1/\log n)$ and $(\log^2 n)/n=o(v_n)$ and the block size is chosen such that $s_n=\ord\big(\min(v_n^{-1}, (nv_n)^{1/2})\big)$ (for instance, with $l_n\ge \max(s_n,\log n/|2\log\sigma|)$ and $r_n$ of larger order than $l_n$, but of smaller order than $\min(v_n^{-1}, (nv_n)^{1/2})$).

    Moreover, the following straightforward generalization of Lemma 4.3 of \cite{KSW19} holds: for all functions $\psi:\R^d\to \R$  that vanish on a neighborhood of 0 such that $|\psi(x)|\le c\|x\|^{q/2}$ for some $c>0$ and all $x\in\R^d$, one has
   \begin{equation} \label{eq:KSW_Lemma43}
     \lim_{L\to\infty} \limsup_{n\to\infty} \frac 1{v_n} \sum_{j=L+1}^{r_n} E[|\psi(X_0/u_n)\psi(X_j/u_n)|] = 0,
   \end{equation}
   provided $r_nv_n\to 0$.

    To verify Condition (BC), observe that the operator norm is sub-multiplicative and thus, with $\kappa$ defined in \eqref{eq:kappadef},
    \begin{align}
     E[\|\Pi_{1,k}\|^q] & \le \big( E[\|\Pi_{1,m}\|^q]\big)^{\floor{k/m}} E\big[\|\Pi_{1,k-m\floor{k/m}}\|^q\big]\\
      & \le \big(\kappa^{1/m}\big)^k \max_{0\le j<m} \frac{E[\|\Pi_{1,j}\|^q]}{\kappa^j} =: \tilde\kappa^k c_m,
    \end{align}
    for all $k\in\N$. Therefore, using \eqref{eq:SRErec}, one may first deduce
    \begin{equation} \label{eq:BC_SRE}
      P(\|X_h\|> cu_n\mid\|X_0\|>cu_n) \le P\Big\{\|X_0\|>\frac{cu_n}2\Big\} + 2^q c_m\tilde \kappa^h E\bigg( \Big(\frac{\|X_0\|}{cu_n}\Big)^q\,\Big|\, \|X_0\|>cu_n\bigg)
    \end{equation}
     and then
     (BC)  in the same way as condition (C) in \cite{drees2015}; see Supplement for details.

    According to Remark \ref{rem:cond} (b) it suffices to prove (M') (ii) instead of (TC). To this end we will need even stronger conditions on $v_n$ and $s_n$:
    	\begin{itemize}
    		\item[\bf(SRE'')] $v_n=\ord((\log n)^{-(3+\zeta)})$ for some $\zeta>0$, $(\log^2 n)/n=o(v_n)$,  $s_n=\ord\big(\min(v_n^{-1/(3+\zeta)},$ $ (nv_n)^{1/2})\big)$.
    \end{itemize}
	Choose some $q\in(\alpha/(1+\zeta/2),\alpha)$ and some $\tau\in(1/q,(1+\zeta/2)/\alpha)$, and  define $\eps_h := |h|^{-\tau}$ for $h\in\Z$, so that $(\eps_h^q)_{h\in\Z}$ is summable. Then, with $v_{n,c}:= P\{\|X_0\|>cu_n\}$,
	\begin{align}
		\sum_{h=-s_n}^{s_n} & E\bigg(\Big(\frac{\|X_h\|}{u_n}\Big)^q \Ind{\|X_h\|<u_n}\,\Big|\, \|X_0\|>u_n\bigg)\\
		& \le  \sum_{h=-s_n}^{s_n} \big[ P(\|X_h\|>\eps_h u_n\mid \|X_0\|>u_n)+ \eps_h^q\big]\\
		& \le 2 \sum_{h=1}^{s_n} P(\|X_h\|>\eps_h u_n\mid \|X_0\|>\eps_h u_n) \frac{v_{n,\eps_h}}{v_n} + \Ord(1). \label{eq:Mivbound}
	\end{align}
	By \eqref{eq:BC_SRE}, the $h$-th summand on the right hand side is bounded by
	\begin{align}
		\bigg[v_{n,\eps_h/2}+2^q c_m\tilde\kappa^h E\bigg( \Big(\frac{\|X_0\|}{\eps_h u_n}\Big)^q\,\Big|\, \|X_0\|>\eps_h u_n\bigg)\bigg]\frac{v_{n,\eps_h}}{v_n}.
	\end{align}
	Check that, for all $1\le h\le s_n$,  $\eps_h u_n\ge s_n^{-\tau}u_n$ is of larger order than $s_n^{-\tau} v_n^{-1/q}=(s_n v_n^{1/(\tau q)})^{-\tau}$ which tends to $\infty$ by condition (SRE'') because $\tau q<1+\zeta/2$. Therefore, the regular variation of $\|X_0\|$ combined with the Potter bounds shows that, for all $\alpha^*>\alpha$,
	\eqref{eq:Mivbound} is bounded by
	\begin{align}\label{eq:boundM'ii}
		4\sum_{h=1}^{s_n} \eps_h^{-\alpha^*}\bigg[\Big(\frac{\eps_h}2\Big)^{-\alpha^*}v_n + 2^q c_m\tilde\kappa^h E[\|Y_0\|^q]\bigg] +\Ord(1) = \Ord(s_n^{2\alpha^*\tau+1}v_n)+\Ord(1).
	\end{align}
	In view of (SRE''), this in turn is bounded if $\alpha^*$ is chosen such that $\alpha^*\tau\le 1+\zeta/2$. Now Condition (M') (ii) is obvious, since $|\log x|^{1+\delta}x^\alpha$ can be bounded by a multiple of $x^q$ for all $x\in (0,1)$.

    So far we have verified all assumptions of Theorem \ref{th:fidiconv} except for the bias condition \eqref{eq:biascondition.gA}, which is always fulfilled if $u_n$ is chosen sufficiently large, and Condition (C$\Theta$), which is obviously satisfied if $\Theta_i$ has continuous marginal distributions. In particular, if $i>0$ and $C$ is absolutely continuous, then this is also true for $\Theta_i$ and hence for its marginal distributions. More generally, our results apply to subsets of the family $\AA$ where all sets $(-\infty,y]$ are omitted for which $y_j$ belongs to a neighborhood of some jump point of the $j$-th marginal distribution for some $1\le j\le d$.

    For process convergence established in Theorem \ref{th:procconv}, in addition we need Condition (A). Parts (i) and (ii) are obvious with $ A_t:=\times_{i=1}^d [0,\varpi(t_i)]\cap\R^d$ for some continuous, increasing mapping $\varpi$ from $[0,1]$ onto $[-\infty,\infty]$.
   Then $\bigcap_{s\in(t,1]}A_{s^{(k)}}\setminus A_{t^{(k)}}=\emptyset$, so that (A) (v), (vi) and (vii) are trivial, and (iv) is equivalent to (C$\Theta$). Finally, part (iii) is fulfilled, because the processes are continuous from the right in each coordinate.

   To apply Theorem \ref{th:procconvunknownalpha} on the asymptotic normality of the estimator when $\alpha$ is unknown, we have to check the conditions (BC') and (M), while the bias condition (H) is again satisfied for $u_n$ sufficiently large. The former can be proved in the same way as condition (2.9) of \cite{knezevic2020} using \eqref{eq:KSW_Lemma43}. Likewise, Condition (M) (i) corresponds to (2.11) of \cite{knezevic2020} and it can be established by completely analogous arguments, provided $r_n^{1+\zeta}v_n\to 0$ for some $\zeta>0$, which holds by (SRE''); see Supplement for details.

   Since (M') (ii) has already been verified, in view of Remark \ref{rem:cond} (b), (M) (ii) follow if we establish (M') (i).
   Since, for all $\eta>0$, $\psi(x):=\max\big((\log^+x)^\eta,\ind{[1,\infty)}(x)\big)$ can be bounded by a multiple of $x^{q/2}$ and the time series is stationary,
   \begin{align}
    E\bigg(  \sup_{L<|h|\le s_n} \Big(\log^+ \frac{\|X_h\|}{u_n}\Big)^\eta \,\Big|\, \|X_0\|>u_n\bigg)
    & \le \frac 1{v_n} E\bigg[ \sup_{L<|h|\le s_n} \psi\Big(\frac{\|X_0\|}{u_n}\Big) \psi\Big(\frac{\|X_h\|}{u_n}\Big) \bigg] \\
    & \le  \frac{2}{v_n}\sum_{h=L+1}^{s_n}  E[\psi(\|X_0\|/u_n)\psi(\|X_h\|/u_n)] \\
    & \le 1
   \end{align}
   eventually if $L$ is chosen sufficiently large; cf.\ \eqref{eq:KSW_Lemma43}. Because $E \big((\log^+(\|X_h \|/u_n))^\eta\mid \|X_0\|>u_n\big)$ is bounded for all fixed $h$ due to the regular variation of the time series, the condition (M') (i) is obvious.

   Finally, (M) (iii) can be established  by the same arguments as (M') (ii). Since the denominator is at least $1$ and $\log^-(x)x^\alpha\leq  \tilde{c}_q x^q$ for $q<\alpha$ and some $\tilde{c}_q>0$, the boundedness of \eqref{eq:Mivbound} implies Condition (M) (iii). 

   To sum up, all limit theorems apply if the conditions (SRE), (SRE'), (SRE'') and the bias conditions \eqref{eq:unifbiascond} and (H) are met, and $\Theta_i$ have continuous marginal distributions.

\section{Simulations}
\label{section:simulations}

In this section we present the results of a Monte Carlo simulation study to evaluate the finite sample performance of the projection based estimator $\pnAhh$ with estimated $\alpha$. As competitors we consider the forward estimator $\pnfh$, see \eqref{Eq:forwardest}, and the backward estimator $\pnbh$, see \eqref{Eq:backwardest}.
We study their performance in two well-known real valued time series models:
\begin{itemize}
		
	\item GARCHt: Firstly, we consider  a GARCH(1,1) time series as a typical model for financial data: $X_t=\sigma_t \epsilon_t$ with $\sigma_t^2=0.1+0.14X_{t-1}^2+0.84\sigma_{t-1}^2$ and independent innovations $\epsilon_t$ with Student's $t_{4}$-distribution, standardized to unit variance.
	These parameters ensure that a stationary, regularly varying solution with index $\alpha=2.6$  exists, see \cite{mikosch2000}, Section 2.2, and \cite{davis2018}, Section 4. Its forward spectral tail process $(\Theta_t)_{t \geq 0}$ can be represented as follows:
\begin{equation}
	\Theta_t=\frac{\tilde{\epsilon}_t}{|\tilde{\epsilon}_0|}\prod_{i=1}^{t} (0.14\tilde{\epsilon}_{t-i}^2+0.84)^{1/2},\quad t\ge 0,
	\end{equation}
	where $\tilde{\epsilon}_h$, $h\geq 1$, are i.i.d.\ with a Student's $t_4$-distribution, standardized to unit variance; we denote the corresponding density by $f_\epsilon$. The random variable $\tilde{\epsilon}_0$ is independent of $(\tilde{\epsilon}_t)_{t \geq 1}$ with density $x\mapsto f_\epsilon(x)|x|^{2.6}/E[|\epsilon_1|^{2.6}]$, see Proposition 6.2 of \cite{ehlert2015}.
	\item SRE: The second model is defined as a stationary solution to the univariate stochastic recurrence equation $X_t=C_tX_{t-1}+D_t$, $t\in\Z$, with i.i.d.\ $\R^2$-valued $(C_t,D_t)$. We choose $C_t$ and $D_t$ to be independent with $C_t\sim\mathcal{N}(1/3,8/9)$ and $D_t\sim \mathcal{N}(-10,1)$; see \cite{drees2015}, Sections 5.2 and 6 for a discussion of this model. (We have chosen this model, which does not exactly fit in the setting of Section \ref{section:SRE}, to enable a comparison with this reference.) The unique  stationary solution is regularly varying with index $\alpha=2$. Moreover, the forward spectral tail processes is of the form
    \begin{equation}
	\Theta_t=\Theta_0\prod_{h=1}^{t}{C}_h, \quad t\ge 0,
	\end{equation}
where $\Theta_0$ is uniformly distributed on $\{-1,1\}$ and independent of  $(C_h)_{h\ge 1}$; see \cite{janssen2014} and \cite{goldie1991}, Theorem 4.1.	
\end{itemize}
We consider the family of sets $A=(-\infty,x]$, $x\in\R$; the corresponding probabilities describe the cdf of $\Theta_i$.
For all lags $i\in\{1,\ldots,10\}$, the cdf of $\Theta_i$ is approximately calculated on $[-2,2]$ (in steps of $0.01$)  via $10^7$ Monte Carlo repetitions. For $1000$ time series of length $2000$ we calculate the three estimates of $P\{\Theta_i\leq x\}$, $x\in[-2,2]$. In each simulation, we choose the empirical $95\%$ quantile of the absolute values as threshold $u_n$.

In addition, for the projection based estimator $\pnAhh$ one has to choose the block length, determined by $s_n$. If $s_n$ is chosen too small, then the estimated distribution of $\Theta_i$ has substantial mass at 0, an artefact caused by the cut off of the time series at lags larger than $s_n$ in absolute value   in \eqref{eq:def_emp_est}. In the results presented below, we have chosen $s_n=30$, but in most cases the performance of the projection based estimator turned out to be quite stable for a wide range of values of $s_n$ including the interval  $\{20, \ldots, 50\}$; see Supplement for details.
\begin{figure}[b]
	\includegraphics[width=1\textwidth]{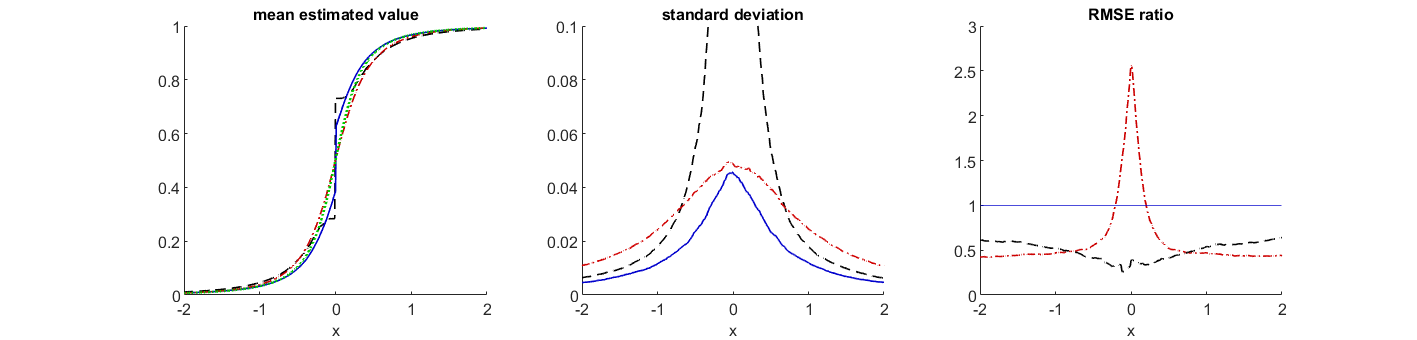}
	\includegraphics[width=1\textwidth]{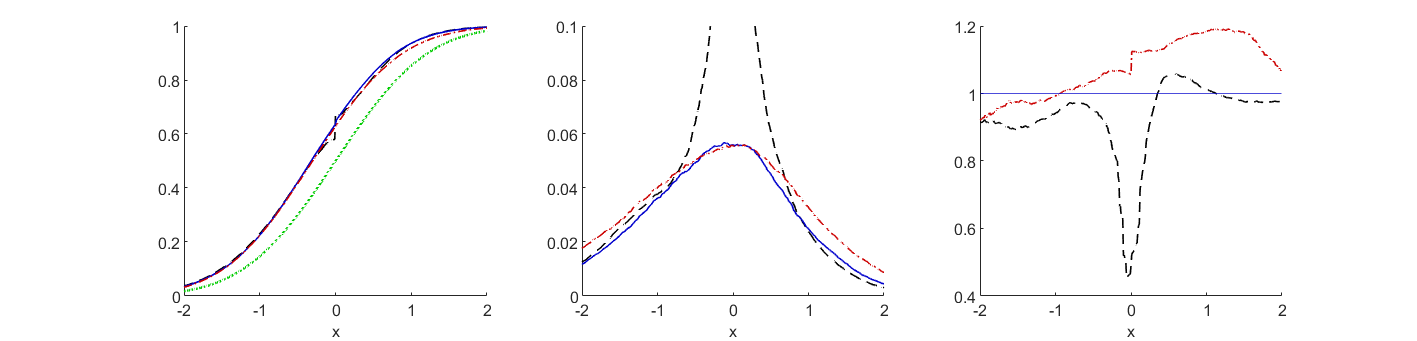}
\caption{Mean (left), standard deviation (middle) and relative efficiency w.r.t.\  $\pnAhh$ (right), of $\pnAhh$ (blue solid line), $\pnbh$ (black dashed line) and $\pnfh$ (red dashed-dotted line) for
GARCHt model and lag $i=10$ (top) and SRE model and lag $i=1$ (bottom); the true cdf is indicated by the green dotted line.}
	\label{figure:pb.1.95}
\end{figure}

First we examine the GARCHt model for lag $i=10$, where the Hill type estimator of $\alpha$ exhibits a bias of about $0.014$ and a standard deviation of about $0.461$. The top row of Figure \ref{figure:pb.1.95} shows mean and standard deviation of all three estimators.  In addition, the relative efficiencies of the forward and backward estimator w.r.t.\ the newly proposed estimator, i.e.\ the ratio between the root mean squared error (RMSE) of the respective estimator and the RMSE of $\pnAhh$, is shown in the right column.
By construction, both the projection based estimator and the backward estimator of the distribution of $\Theta_i$ typically have mass at $0$, leading to a bias of the estimated cdf at small values of $|x|$ and thus to an underperformance w.r.t.\ the forward estimator for $|x|<0.2$. In contrast,  for larger values of $|x|$ the relative efficiency of the forward estimator w.r.t.\ $\pnAhh$ is less than $1/2$, and the backward estimator is outperformed by the newly proposed estimator for all $x$.


The bottom row of Figure \ref{figure:pb.1.95} shows the results for the SRE model and lag $i=1$, when the estimator of $\alpha$ has bias $\approx 0.20$ and standard deviation $\approx 0.385$. For all three estimators we observe a strong bias caused by $D_t$, which adds a negative drift to the time series on finite levels, while it vanishes in the limit. The standard deviation of $\pnAhh$ is smaller than or equal to that of $\pnfh$, but as the RMSE is dominated by the bias (which is somewhat smaller for $\pnfh$), the relative efficiency of the forward estimator is larger than 1 for most values of $x$. In contrast, the backward estimator, with its huge standard deviation near the origin, is less efficient than the projection based estimator for most $x$. If one evaluates the performance of $\pnAhh$, $\pnfh$ and $\pnbh$ as estimators of the ``pre-asymptotic'' probabilities $P(X_i/|X_0|\le x\mid |X_0|>u_n)$ (as it has been done in \cite{davis2018}), then the bias almost vanishes and for most values of $x$ the projection based estimator outperforms the forward estimator; see Supplement for details.

Further simulation results for other lags, thresholds  and models can be found in the Supplement. Overall, the projection based estimator tends to have the smallest variance of the three estimators, in particular for higher lags $i$ and larger values of $|x|$. Usually it shows the most robust behavior which combines the good performance of the forward estimators for small values of $|x|$ with the improvements of the backward estimator for large $|x|$.

\appendix
\section{Proofs}
\subsection{Proof of Theorem \ref{th:fidiconv}.}

First we prove weak convergence of the empirical processes $(\Ztn(A))_{A\in\mathcal{A}\cup \{\mathbb{R}^d\}}$ associated with the sums $T_{n,A}$ of sliding blocks statistics defined by
\begin{align}
\Ztn(A)&=\frac{1}{\sqrt{nv_n}}\left(T_{n,A}-E[T_{n,A}]\right).
\end{align}

\begin{proposition}
	\label{prop:asymptotic.of.tng}
	If the conditions (RV), (S), (BC), (TC) and (C$\Theta$) are satisfied, then the fidis of $\Ztn$ weakly converge to the fidis of the centered Gaussian process $Z$ defined in Theorem \ref{th:fidiconv}.
\end{proposition}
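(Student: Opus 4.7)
The plan is to apply the sliding-blocks central limit theorem of \DN\ to the bounded functionals $g_A$ (which vanish unless $\|w_0\|>1$) and then to identify the limiting covariance via the tail and spectral tail processes. By the Cram\'er--Wold device I only need to prove one-dimensional Gaussian convergence of every linear combination $\sum_k\lambda_k\Ztn(A_k)$, i.e.\ to work with a single bounded block functional $g^\lambda:=\sum_k\lambda_k g_{A_k}$, so from now on I focus on a generic pair $(g_A,g_B)$ when computing covariances.

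I would begin with the standard big-block / small-block splitting of $\{1,\dots,n\}$ into $k_n\sim n/r_n$ consecutive big blocks $B_k$ of length $r_n-l_n$ separated by small-block gaps of length $l_n$, and set $S_{n,k}(A):=\sum_{t\in B_k}g_A(W_{n,t})$. Since $W_{n,t}$ depends only on $X_{t-s_n},\ldots,X_{t+s_n}$, condition (S) together with the remark following it allows me to enlarge $l_n$ so that $\beta_{l_n-2s_n}=\ord(r_n/n)$, and a standard $\beta$-mixing coupling then replaces $(S_{n,k}(A))_k$ by an independent sequence at total-variation cost $\ord(1)$. The contribution of the small-block residuals is controlled in $L^2$ by condition (BC) and, after normalisation by $(nv_n)^{1/2}$, is negligible because $l_n=\ord(r_n)$.

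The heart of the argument is the computation of $\lim_n(r_nv_n)^{-1}\mathrm{Cov}(S_{n,1}(A),S_{n,1}(B))$. Stationarity reduces it to
\begin{align*}
  \tfrac{1}{v_n}\sum_{|j|<r_n}\Bigl(1-\tfrac{|j|}{r_n}\Bigr)\,E\bigl[g_A(W_{n,0})g_B(W_{n,j})\bigr]+\ord(1),
\end{align*}
and the summand vanishes unless both $\|X_0\|>u_n$ and $\|X_j\|>u_n$. For fixed $j$, the convergence
\begin{align*}
  \tfrac{1}{v_n}P\{\|X_0\|>u_n,\|X_j\|>u_n\}\;\to\; E[\|\Theta_j\|^\alpha\wedge 1]
\end{align*}
(which follows from \eqref{eq:tailprocess} together with $\|Y_j\|=\|Y_0\|\|\Theta_j\|$ and the Pareto law of $\|Y_0\|$), combined with the continuous-mapping theorem (made applicable by (C$\Theta$)), yields the pointwise limit
\begin{align*}
  \tfrac{1}{v_n}E\bigl[g_A(W_{n,0})g_B(W_{n,j})\bigr]\;\to\; E\Bigl[(\|\Theta_j\|^\alpha\wedge 1)\,\xi(A)\,\xi(B)\Bigr],
\end{align*}
with $\xi(A),\xi(B)$ the $j$-independent random measures of the statement. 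Here the time-change formula \eqref{eq:timechangeform} is what allows the shift by $j$ inside the definition of $g_B$ to be absorbed into the factor $\|\Theta_j\|^\alpha\wedge 1$ and to render the inner factors free of $j$. Uniform summability in $j$ (to interchange limit and $j$-sum) is provided by the two-sided version of (BC), while (TC) is used to truncate the $h$-sums inside $g_A,g_B$ to a bounded index range and to bound the resulting truncation error uniformly in $n$. Summing over $j\in\Z$ then reproduces exactly $c(A,B)$ from \eqref{eq:covZ}.

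The final step is a Lindeberg condition for $(nv_n)^{-1/2}\sum_k\widetilde{S}_{n,k}$ (with $\widetilde{S}_{n,k}$ the decoupled block sums), which is routine because $|S_{n,k}(A)|\le\sum_{t\in B_k}\Ind{\|X_t\|>u_n}$ has fourth moment $\Ord(r_n^2v_n^2)$ by (BC), and $r_n=\ord(\sqrt{nv_n})$. The main obstacle is the covariance identification: three limits have to be taken simultaneously and in the right order -- cluster truncation via (TC), replacement of the finite-length window $W_{n,0}$ by the full spectral tail process via (RV) and (C$\Theta$), and exchange of the $n$-limit with the lag sum via the two-sided (BC) -- and the time-change formula has to be applied in just the right way to convert the shift-by-$j$ inside $g_B$ into the cluster weight $\|\Theta_j\|^\alpha\wedge 1$ that produces the final form of $c(A,B)$.
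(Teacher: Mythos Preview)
Your approach is essentially the paper's: the paper simply cites Theorem~2.1 of \DN, which packages exactly the big-block/small-block decoupling and Lindeberg verification you spell out, and then devotes all the work to the covariance limit (their Lemma~A.3). So structurally there is no difference; you just re-derive the blocking machinery instead of quoting it.

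The covariance computation is where your sketch needs correction. You attribute the $j$-independence of the inner factors and the appearance of $\|\Theta_j\|^\alpha\wedge 1$ to the time-change formula~\eqref{eq:timechangeform}; that is not what is going on, and the time-change formula is not used at all here. What actually happens is: (i) once (TC) and (RV) let you pass to the full tail process, the inner sum in $g_B(W_{n,j})$ becomes $\sum_{h\in\Z}\|Y_{j+h}\|^\alpha/\|Y\|_\alpha^\alpha\,\ind{B}(Y_{j+h+i}/\|Y_{j+h}\|)$, and a trivial index substitution $k=j+h$ makes this equal to $\xi(B)$ built from $Y$ --- no structural identity is needed, only that the sum now runs over all of $\Z$; (ii) the only remaining $j$-dependence is $\Ind{\|Y_j\|>1}=\Ind{\|Y_0\|\,\|\Theta_j\|>1}$, and integrating out the independent Pareto variable $\|Y_0\|$ gives $\|\Theta_j\|^\alpha\wedge 1$. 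The paper carries this out via the two-argument functional $f_{A,B}$ on $l_\alpha\times l_\alpha$, then applies Pratt's lemma (with the (BC) envelope $e_{n,1}(|j|)$) to interchange the lag sum with the $n$-limit, and finally does the $\|Y_0\|$-integration explicitly. Your ingredients (TC, RV/(C$\Theta$), two-sided (BC)) are the right ones, but the mechanism is reindexing plus Pareto integration, not the time-change formula.

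One smaller point: the Lindeberg condition is even easier than you state. Since each $g_A$ is bounded by~1, the block sums satisfy $|S_{n,k}(A)|\le\sum_{t\in B_k}\Ind{\|X_t\|>u_n}\le r_n$ deterministically, and $r_n=\ord(\sqrt{nv_n})$ by (S) makes the Lindeberg truncation indicator eventually zero. No fourth-moment bound is needed (and (BC) by itself does not obviously give one).
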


	To prove this proposition, we want to apply Theorem 2.1 of \cite{DN20} (abbreviated to \DN\, in what follows) to empirical processes based on the transformed observations $X'_{n,t}:=X_{t-s_n}/u_n$, $1\le t\le n':=n+2s_n$, and block lengths $s'_n:=2s_n+1$ and $l_n':=2s_n'-1$. 	We will consider the family  $\mathcal{G}=\left\{g_A| A\in \mathcal{A}\cup \{\mathbb{R}^d\}\right\}$ of bounded functions $g_A$ taking values in $[0,1]$, and  use the normalization $b_n(g)=(nv_n/p_n)^{1/2}$ for all $g\in\mathcal{G}$ with $p_n=P\{\max_{1\leq t\leq r_n} \|X_t\|>u_n\}$.

We start with a verification of the convergence of the normalized covariances, i.e.\ condition (C) of \DN. To this end, we need some technical lemmas. The proof of the first lemma is given in the Supplement.

\begin{lemma} \label{lemma:AC}
  If (RV) and (BC) holds, then the so-called anti clustering condition
  \begin{align}
	\label{eq:anticlustering bedingung}
	\lim_{m\to\infty}\limsup_{n\to\infty}P\Big(\max_{m\leq |t|\leq r_n}\|X_t\|>cu_n \,\Big|\, \|X_0\|>cu_n \Big)=0
	\end{align}
	is satisfied for all $c\in(0,\infty)$.
\end{lemma}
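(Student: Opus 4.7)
The plan is to reduce the anti-clustering probability to a tail sum of one-lag conditional probabilities via a union bound, and then apply Condition (BC) together with stationarity and, for $c>1$, the regular variation of $\|X_0\|$.

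First, I would use the union bound to write
\[
P\Big(\max_{m\le|t|\le r_n}\|X_t\|>cu_n \,\Big|\, \|X_0\|>cu_n\Big) \le \sum_{m\le|t|\le r_n} P(\|X_t\|>cu_n \mid \|X_0\|>cu_n).
\]
By stationarity (shifting both indices by $k$), $P(\|X_{-k}\|>cu_n\mid\|X_0\|>cu_n) = P(\|X_k\|>cu_n\mid\|X_0\|>cu_n)$, so the right-hand side is at most $2\sum_{k=m}^{r_n}P(\|X_k\|>cu_n\mid\|X_0\|>cu_n)$.

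For $c\in(0,1]$, Condition (BC) bounds each summand by $e_{n,c}(k)$. Combining the assumed convergence $\sum_{k=1}^{r_n}e_{n,c}(k)\to\sum_{k=1}^\infty e_{\infty,c}(k)$ with the pointwise convergence $e_{n,c}(k)\to e_{\infty,c}(k)$ for each fixed $k$ (and hence $\sum_{k=1}^{m-1}e_{n,c}(k)\to\sum_{k=1}^{m-1}e_{\infty,c}(k)$) yields $\limsup_{n\to\infty}\sum_{k=m}^{r_n}e_{n,c}(k) = \sum_{k=m}^\infty e_{\infty,c}(k)$, which vanishes as $m\to\infty$ by absolute summability.

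For $c>1$, Condition (BC) does not apply directly, and this is the main obstacle. I would handle this case by noting that $\{\|X_k\|>cu_n\}\subseteq\{\|X_k\|>u_n\}$, which gives
\[
P(\|X_k\|>cu_n\mid\|X_0\|>cu_n) \le P(\|X_k\|>u_n\mid\|X_0\|>u_n)\,\frac{P(\|X_0\|>u_n)}{P(\|X_0\|>cu_n)}.
\]
By the regular variation in (RV), the ratio converges to $c^\alpha$ and is therefore uniformly bounded (say by $2c^\alpha$ for $n$ large). Applying (BC) with parameter $c=1$ then majorizes the tail sum by $4c^\alpha\sum_{k=m}^{r_n}e_{n,1}(k)$, which reduces the problem to the case already treated. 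Once this extension from $(0,1]$ to $(0,\infty)$ is established, the anti-clustering bound follows directly from the summability in (BC).
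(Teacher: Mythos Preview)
Your proof is correct and follows the standard route: union bound, stationarity to fold negative lags onto positive ones, then the tail-sum control from (BC). The paper defers its proof to the Supplement, but the hints in the main text (in particular the remark immediately after (BC) that stationarity extends the bound to $k<0$ via $e_{n,c}(k):=e_{n,c}(|k|)$, and the analogous ratio manipulation $P(\|X_0\|>cu_n)/P(\|X_0\|>u_n)$ appearing in the proof of Lemma~\ref{lemma:unknown.alpha.approximating.error}) indicate that the paper's argument proceeds along essentially the same lines. Your treatment of the case $c>1$ via the inclusion $\{\|X_k\|>cu_n\}\subset\{\|X_k\|>u_n\}$ and the regularly varying ratio is the natural reduction and matches what one would expect.
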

\begin{lemma}  \label{lemma:umformulierung.e3}
If (RV) holds, then condition (C$\Theta$) is satisfied if and only if
$$P\left\{\exists t\in\mathbb{Z}: \frac{Y_{t+i}}{\|Y_{t}\|}\in \partial A, \|Y_t\|>0\right\}=0, \qquad \forall\, A\in\mathcal{A}.$$
\end{lemma}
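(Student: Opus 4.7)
My plan is to exploit two ingredients: the representation $Y_t=\|Y_0\|\Theta_t$ with $\|Y_0\|>0$ a.s., which reduces every ratio $Y_{t+i}/\|Y_t\|$ to $\Theta_{t+i}/\|\Theta_t\|$, and the time change formula \eqref{eq:timechangeform}, which I will use to transfer the null-boundary condition from $\Theta_i$ to $\Theta_{t+i}/\|\Theta_t\|$ for arbitrary $t\in\Z$.

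The ``$\Leftarrow$'' direction is immediate: since $\|Y_0\|>0$ a.s., the event $\{\Theta_i\in\partial A\}=\{Y_i/\|Y_0\|\in\partial A,\|Y_0\|>0\}$ is contained in the displayed existential event with $t=0$, hence $P\{\Theta_i\in\partial A\}=0$ for all $A\in\AA$.

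For ``$\Rightarrow$'', fix $A\in\AA$. Using $Y_t=\|Y_0\|\Theta_t$ once more, $P\{\|Y_t\|>0,\,Y_{t+i}/\|Y_t\|\in\partial A\}=P\{\|\Theta_t\|>0,\,\Theta_{t+i}/\|\Theta_t\|\in\partial A\}$, so by countable sub-additivity it suffices to show that for every fixed $t\in\Z$ the latter probability vanishes. For $c>0$, I apply \eqref{eq:timechangeform} with shift $t$ (allowed because the formula holds for every integer shift; I rename the dummy index to avoid collision with the fixed lag $i$), with $m,n\in\N_0$ large enough that $\{0,i\}\subset[-m,n]$, and with the test function
$$
  f\bigl((x_s)_{-m\le s\le n}\bigr):=\Ind{\|x_0\|>1/c}\,\ind{\partial A}\!\Big(\frac{x_i}{\|x_0\|}\Big),
$$
which indeed vanishes at $x_0=0$ since then the indicator does. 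The left-hand side of \eqref{eq:timechangeform} equals $P\{\|\Theta_t\|>1/c,\,\Theta_{t+i}/\|\Theta_t\|\in\partial A\}$. On the right-hand side, substituting $x_s=\Theta_s/\|\Theta_{-t}\|$ together with $\|\Theta_0\|=1$ gives $\|x_0\|=1/\|\Theta_{-t}\|$ and $x_i/\|x_0\|=\Theta_i$, so the right-hand side collapses to
$$
  E\bigl[\Ind{0<\|\Theta_{-t}\|<c}\,\ind{\partial A}(\Theta_i)\,\|\Theta_{-t}\|^\alpha\bigr]\;\le\;c^\alpha\,P\{\Theta_i\in\partial A\}\;=\;0
$$
under (C$\Theta$). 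Letting $c\to\infty$ and invoking monotone convergence removes the truncation $\|\Theta_t\|>1/c$, and summing over $t\in\Z$ completes the proof.

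The only subtlety I anticipate is the null event $\{\|\Theta_{-t}\|=0\}$, on which the substitution $x_s=\Theta_s/\|\Theta_{-t}\|$ is not literally defined; this is harmless because the multiplicative factor $\|\Theta_{-t}\|^\alpha$ on the right-hand side of \eqref{eq:timechangeform} forces any contribution from that event to vanish, which is exactly how the convention underlying \eqref{eq:timechangeform} is intended to be read. I expect no other genuine obstacle, as the argument is a one-line application of the time change formula after the correct choice of test function.
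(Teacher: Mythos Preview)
Your proof is correct. The ``$\Leftarrow$'' direction is immediate, and for ``$\Rightarrow$'' your application of the time change formula \eqref{eq:timechangeform} with the truncated test function $f(x)=\Ind{\|x_0\|>1/c}\,\ind{\partial A}(x_i/\|x_0\|)$ is valid; the substitution yields exactly $E[\Ind{0<\|\Theta_{-t}\|<c}\,\ind{\partial A}(\Theta_i)\,\|\Theta_{-t}\|^\alpha]\le c^\alpha P\{\Theta_i\in\partial A\}=0$, and monotone convergence plus countable subadditivity finish the job. Your remark on the null event $\{\|\Theta_{-t}\|=0\}$ is the right way to handle that convention.

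The paper takes a different and shorter route: it invokes the global RS-invariance $P^\Theta=(P^\Theta)^{RS}$ to rewrite $P\{\Theta_i\in\partial A\}$ directly as the expectation of the non-negative random variable $\sum_{t\in\Z}\|\Theta\|_\alpha^{-\alpha}\|\Theta_t\|^\alpha\,\ind{\partial A}(\Theta_{t+i}/\|\Theta_t\|)$, and then observes that this expectation vanishes if and only if every summand vanishes a.s., i.e.\ if and only if the existential event has probability zero. This gives both implications in a single line, with no truncation and no limiting argument. Your approach, by contrast, treats each $t$ separately via the time change formula and is therefore slightly longer, but it has the minor advantage of using only \eqref{eq:timechangeform} (which holds under (RV) alone) rather than the RS-representation (which is most naturally stated under the additional hypothesis $\|\Theta\|_\alpha<\infty$ a.s.).
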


\begin{proof}
    Since $P^\Theta=(P^\Theta)^{RS}$ and $\Theta_t=Y_t/\|Y_0\|$, $P\{\Theta_{i}\in \partial A\}=0$ holds if and only if
    \begin{equation}
	0=P\left\{ \sum_{t\in\mathbb{Z}} \frac{\|\Theta_t\|^{\alpha}}{\|\Theta\|_{\alpha}^{\alpha}} \mathds{1}_{\partial A}\left(\frac{\Theta_{t+i}}{\|\Theta_t\|}\right)>0 \right\}
	=P\left\{\exists t\in\mathbb{Z}: \frac{Y_{t+i}}{\|Y_{t}\|}\in \partial A, \|Y_t\|>0\right\}. \qedhere
	\end{equation}
\end{proof}

In the following proofs we consider functions on the sequence space $l_\alpha=\{(x_t)_{t\in\mathbb{Z}}\mid \sum_{t\in\Z}\|x_t\|^\alpha<\infty \}$ (equipped with the supremum norm). For arbitrary $r\in\N$, $\mathbb{R}^{2r+1}$ is embedded in $l_\alpha$ by the mapping $(z_i)_{|i|\leq r}\mapsto (z_t)_{t\in\mathbb{Z}}$ with ${z}_t:=0$ for $|t|> r$. Note that \eqref{eq:anticlustering bedingung}  ensures that the realizations of the spectral tail process a.s.\ belong to $l_\alpha$ (cf.\ Remark 2.3 of \cite{janssen2019}).
The following result establishes condition (C) of D{\&}N.
\begin{lemma}
	\label{lemma:covarianz konkreter schaetzer}
	If the conditions (RV), (S), (BC), (TC) and (C$\Theta$) are satisfied, then
	\begin{equation}
	\frac{1}{r_nv_n}Cov\bigg(\sum_{t=1}^{r_n} g_{A}(W_{n,t}),\sum_{t=1}^{r_n} g_{B}(W_{n,t})\bigg)\to c(A,B),\quad \forall\, A,B\in\mathcal{A}\cup \{\mathbb{R}^d\},
	\end{equation}
	with $c$ defined in Theorem \ref{th:fidiconv}.
\end{lemma}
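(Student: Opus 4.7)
The plan is to expand the covariance by stationarity and then dominate the resulting sum of ``diagonal'' terms $L_{n,j}:=v_n^{-1}E[g_A(W_{n,0})g_B(W_{n,j})]$ ($j\in\Z$) using Condition (BC). By bilinearity and stationarity,
\[
\frac{1}{r_nv_n}Cov\Big(\sum_{t=1}^{r_n}g_A(W_{n,t}),\sum_{t=1}^{r_n}g_B(W_{n,t})\Big)=\sum_{|j|<r_n}\Big(1-\tfrac{|j|}{r_n}\Big)L_{n,j}-\frac{r_n}{v_n}E[g_A(W_{n,0})]E[g_B(W_{n,0})].
\]
Since $g_A,g_B\le\Ind{\|X_0\|>u_n}$, the last term is bounded by $r_nv_n=\ord(1)$ by (S), so it remains to show $\sum_{|j|<r_n}(1-|j|/r_n)L_{n,j}\to c(A,B)$.

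For each fixed $j$ I would first handle the divergence $s_n\to\infty$ by truncation. Letting $g_A^M$ denote the version of $g_A$ in which the inner sums are restricted to $|h|\le M$, an elementary manipulation of the weighted average gives the bound $|g_A-g_A^M|(w)\le 2\Ind{\|w_0\|>1}\sum_{M<|h|\le s}\|w_h\|^\alpha/\sum_{|h|\le s}\|w_h\|^\alpha$. Splitting the numerator at level $cu_n$, the small-value contribution is controlled directly by (TC), while the large-value contribution is (using $\|X_h\|^\alpha/\sum_{|k|\le s_n}\|X_k\|^\alpha\le 1$) at most $\sum_{M<|h|\le s_n}\Ind{\|X_h\|>cu_n}$, whose conditional expectation given $\|X_0\|>u_n$ is bounded by $(v_{n,c}/v_n)\sum_{h>M}e_{n,c}(h)\to 0$ via (BC) and $v_{n,c}/v_n\to c^{-\alpha}$. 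Since $g_A,g_B\in[0,1]$, this makes $E[|g_Ag_B-g_A^Mg_B^M|(W_{n,0},W_{n,j})\mid\|X_0\|>u_n]\to 0$ as $M\to\infty$ uniformly in $n$ and $j$. For fixed $M$ and $j$, $g_A^M(W_{n,0})g_B^M(W_{n,j})$ is a bounded function of a finite window of $(X_h/u_n)$, continuous off a set of boundaries of the form $\|w_0\|=1$, $\|w_j\|=1$ and $w_{h+i}/\|w_h\|\in\partial A$ (and analogously for $B$), which has probability zero under the law of $(Y_h)$ by the Pareto law of $\|Y_0\|$ and Lemma~\ref{lemma:umformulierung.e3}. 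The weak convergence \eqref{eq:tailprocess} and the continuous mapping theorem then yield $L_{n,j}\to E[g_A^M((Y_h)_h)\,g_B^M((Y_{h+j})_h)]$.

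Letting $M\to\infty$ by dominated convergence and substituting $Y_h=\|Y_0\|\Theta_h$ gives $g_A^\infty((Y_h)_h)=\Ind{\|Y_0\|>1}\Phi_A(\Theta)$, and after reindexing $k\mapsto h+j$ in the sum defining $g_B^\infty((Y_{h+j})_h)$ one obtains $\Ind{\|Y_j\|>1}\Phi_B(\Theta)$, where $\Phi_C(\Theta):=\sum_{h\in\Z}(\|\Theta_h\|^\alpha/\|\Theta\|_\alpha^\alpha)\ind{C}(\Theta_{h+i}/\|\Theta_h\|)$. Since $\|Y_0\|>1$ a.s.\ and $\|Y_j\|=R\|\Theta_j\|$ with $R$ Pareto$(\alpha)$ independent of $\Theta$, conditioning on $\Theta$ converts $\Ind{R\|\Theta_j\|>1}$ into $\|\Theta_j\|^\alpha\wedge 1$, giving $L_j:=\lim_n L_{n,j}=E[(\|\Theta_j\|^\alpha\wedge 1)\Phi_A(\Theta)\Phi_B(\Theta)]$. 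Finally, $L_{n,j}\le e_{n,1}(|j|)$ for $j\ne 0$ by (BC), and $\sum_{k=1}^{r_n}e_{n,1}(k)\to\sum_{k=1}^\infty e_{\infty,1}(k)<\infty$ by (BC); a standard split at a large cutoff $J$ (finite sums converge by pointwise convergence of $L_{n,j}$, tails are uniformly small) then gives $\sum_{|j|<r_n}(1-|j|/r_n)L_{n,j}\to\sum_{j\in\Z}L_j=c(A,B)$. The main obstacle is the truncation step needed to accommodate $s_n\to\infty$ coupled with the reindexing that turns the shifted-window evaluation $g_B^\infty((Y_{h+j})_h)$ into the characteristic weight $\|\Theta_j\|^\alpha\wedge 1$ of $c(A,B)$.
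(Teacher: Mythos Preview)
Your proposal is correct and follows essentially the same route as the paper's proof: expand the covariance by stationarity, dispose of the product-of-means term via $r_nv_n\to 0$, truncate the inner sums to a fixed window using (TC) and (BC), pass to the tail-process limit for the finite window via \eqref{eq:tailprocess} and (C$\Theta$) (through Lemma~\ref{lemma:umformulierung.e3}), and then interchange limit and infinite sum using the domination $L_{n,j}\le e_{n,1}(|j|)$. The paper packages the last step as an application of Pratt's lemma, whereas you do the equivalent explicit split at a large cutoff $J$; and the paper encodes the integrand as a single function $f_{A,B}$ on $l_\alpha\times l_\alpha$ and defers the truncation estimate \eqref{eq:expecconv.f} to the Supplement, whereas you spell out the bound $|g_A-g_A^M|\le 2\Ind{\|w_0\|>1}\sum_{M<|h|\le s}\|w_h\|^\alpha/\sum_{|h|\le s}\|w_h\|^\alpha$ and split at level $cu_n$ directly. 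These are presentational differences, not different ideas.

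One small caveat: your claim that the truncation error vanishes ``uniformly in $j$'' is stronger than what (TC) provides (it is stated for each fixed $j\in\Z$, and the (BC) part of your bound produces $\sum e_{n,c}(|h+j|)$ rather than $\sum e_{n,c}(|h|)$). Fortunately you never actually use uniformity in $j$: pointwise convergence $L_{n,j}\to L_j$ for each fixed $j$, together with the dominating envelope $e_{n,1}(|j|)$, is all your split argument (or Pratt's lemma) requires.
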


\begin{proof}
	 First note that
	\begin{align}
	Cov&\bigg(\sum_{t=1}^{r_n} g_{A}(W_{n,t}),\sum_{t=1}^{r_n} g_{B}(W_{n,t})\bigg)\\
	&= E\bigg[ \sum_{t=1}^{r_n}\sum_{s=1}^{r_n} g_{A}(W_{n,t})g_{B}(W_{n,s})\bigg]
	- E\bigg[ \sum_{t=1}^{r_n} g_{A}(W_{n,t})\bigg]E\bigg[\sum_{t=1}^{r_n}g_{B}(W_{n,t})\bigg]=:I+II,
    \label{eq:cov konkreter schaetzer zweite summe}
	\end{align}
	where
	\begin{align}  \label{eq:termIInegligible}
	\frac{|II|}{r_nv_n}\leq \frac 1{r_nv_n} \bigg(E\Big[\sum_{t=1}^{r_n}\mathds{1}_{\left\{\|X_t\|>u_n\right\}} \Big]\bigg)^2 = r_nv_n \to 0.
	\end{align}
	By stationarity
	\begin{align}
	I & = \sum_{s=1}^{r_n} \sum_{t=1}^{r_n} E\big[ g_A(W_{n,s}) g_B(W_{n,t})\big] \\
    & = \sum_{j=-r_n}^{r_n} (r_n-|j|)  E\big[ g_A(W_{n,j}) g_B(W_{n,0})\big] \\
     & = r_nv_n \sum_{j=-r_n}^{r_n} \Big(1-\frac{|j|}{r_n}\Big) E\Bigg( \mathds{1}_{\{\|X_j\|>u_n\}}\\
    & \hspace{1.3cm} \times  \bigg(\sum_{h\in H_{n}}\frac{\|X_{j+h}\|^{\alpha}}{\sum_{k=-s_n}^{s_n} \|X_{j+k}\|^{\alpha }} \mathds{1}_{  A}\Big(\frac{X_{j+h+i}}{\|X_{j+h}\|}\Big)
    + \sum_{h\in H_{n}^c}\frac{\|X_{j+h}\|^{\alpha}}{\sum_{k=-s_n}^{s_n} \|X_{j+k}\|^{\alpha }} \mathds{1}_{A}(0)\bigg)\\
	& \hspace{0.8cm}\times \bigg(\sum_{l\in H_n}\frac{\|X_{l}\|^{\alpha}}{\sum_{k=-s_n}^{s_n} \|X_{k}\|^{\alpha }}\mathds{1}_{  B}\Big(\frac{X_{l+i}}{\|X_{l}\|}\Big)+ \sum_{l\in H_n^c}\frac{\|X_{l}\|^{\alpha}}{\sum_{k=-s_n}^{s_n} \|X_{k}\|^{\alpha }}\mathds{1}_{B}(0)\bigg)  \,\bigg|\, \|X_0\|> u_n \Bigg).
  \label{eq:cov konkreter schaetzer erste summe}
	\end{align}

 Define a function $f=f_{A,B}:l_{\alpha}\times l_{\alpha}\to[0,1]$ by
	\begin{align}
	f\big(&(y_t)_{t\in\mathbb{Z}},(z_{t})_{t\in\mathbb{Z}}\big) := \mathds{1}_{\{\|y_0\|>1\}}\mathds{1}_{\{\|z_0\|>1\}} \\
 & \hspace{2cm}\times \Bigg(\sum_{h\in\mathbb{Z}}\frac{\|z_{h}\|^{\alpha}}{\sum_{k\in\mathbb{Z}} \|z_{k}\|^{\alpha }} \mathds{1}_{A}\Big(\frac{z_{h+i}}{\|z_{h}\|}\Big) \bigg) \bigg(\sum_{l\in\mathbb{Z}}\frac{\|y_{l}\|^{\alpha}}{\sum_{k\in\mathbb{Z}} \|y_{k}\|^{\alpha }} \mathds{1}_{B}\Big(\frac{y_{l+i}}{\|y_{l}\|}\Big)\Bigg),
\label{eq:continuous.mapping.function.for.gA}
	\end{align}
with the convention $0/0:=0$.
Using Condition (TC), one may replace sums over $\{-s_n,\ldots,s_n\}$ with sums over $\{-m,\ldots,m\}$ to conclude from \eqref{eq:tailprocess} and (C$\Theta$) that
   \begin{align}\label{eq:expecconv.f}
   	E\big(f\big((X_t/u_n)_{-s_n\le t\le s_n},(X_{t+j}/u_n)_{-s_n\le t\le s_n}\big)\mid \|X_0\|>u_n\big) \to E\big[f\big((Y_t)_{t\in\Z},(Y_{t+j})_{t\in\Z}\big)\big];
   \end{align}
see Supplement for details.
  Moreover,  condition (BC) implies
  \begin{align}
 E\big(f\big((X_t/u_n)_{-s_n\le t\le s_n},(X_{t+j}/u_n)_{-s_n\le t\le s_n}\big)\mid \|X_0\|>u_n\big) & \le E\big(\Ind{\|X_j\|>u_n}\mid \|X_0\|>u_n\big)\\
 &\le e_{n,1}(|j|).
  \end{align}
	Hence, using Pratt's lemma \citep{pratt1960}, we may conclude from \eqref{eq:cov konkreter schaetzer erste summe} and \eqref{eq:expecconv.f} that
	\begin{align}
	\frac{1}{r_nv_n}I&=\sum_{j=-r_n}^{r_n} \Big(1-\frac{|j|}{r_n}\Big)  E\big(f\big((X_t/u_n)_{-s_n\le t\le s_n},(X_{t+j}/u_n)_{-s_n\le t\le s_n}\big)\mid \|X_0\|>u_n\big)\\
	&\rightarrow \sum_{j\in\Z} E\big[f\big((Y_t)_{t\in\Z},(Y_{t+j})_{t\in\Z}\big)\big].
	\end{align}
	
	To sum up, we have shown that
	\begin{align}
	\frac{1}{r_nv_n}&Cov\bigg(\sum_{t=1}^{r_n} g_{A}(W_{n,t}),\sum_{t=1}^{r_n} g_{B}(W_{n,t})\bigg)\\
    &\rightarrow \sum_{j\in\Z}E\bigg[  \mathds{1}_{\{\|Y_j\|>1\}} \bigg(\sum_{h\in\mathbb{Z}}\frac{\|Y_{j+h}\|^{\alpha}}{\|Y\|_\alpha^{\alpha }} \mathds{1}_{A}\Big(\frac{Y_{j+h+i}}{\|Y_{j+h}\|}\Big) \bigg) \bigg(\sum_{l\in\mathbb{Z}}\frac{\|Y_{l}\|^{\alpha}}{\|Y\|_\alpha^{\alpha }} \mathds{1}_{ B}\Big(\frac{Y_{l+i}}{\|Y_{l}\|}\Big)\bigg)\bigg]\\
	&=
	\sum_{j\in\mathbb{Z}}E\bigg[  \mathds{1}_{\{\|Y_j\|>1\}} \bigg(\sum_{k\in\mathbb{Z}}\frac{\|Y_{k}\|^{\alpha}}{\|Y\|_\alpha^{\alpha }} \mathds{1}_{A}\Big(\frac{Y_{k+i}}{\|Y_{k}\|}\Big) \bigg) \bigg(\sum_{l\in\mathbb{Z}}\frac{\|Y_{l}\|^{\alpha}}{\|Y\|_\alpha^{\alpha }} \mathds{1}_{B}\Big(\frac{Y_{l+i}}{\|Y_{l}\|}\Big)\bigg)\bigg].
\label{eq:covlimitrep1}
	\end{align}
	Since $Y_t=\Theta_t\|Y_0\|$ with $(\Theta_t)_{t\in\Z}$ and $\|Y_0\|$ independent and $P\{\|Y_0\|>x\}=x^{-\alpha}\wedge 1$, the limit can be rewritten as
	\begin{align}	
	& \sum_{j\in\Z} E\bigg[  \mathds{1}_{\{\|\Theta_j\|\|Y_0\|>1\}} \bigg(\sum_{k\in\mathbb{Z}}\frac{\|\Theta_{k}\|^{\alpha}}{\|\Theta\|_{\alpha}^{\alpha }} \mathds{1}_{B}\Big(\frac{\Theta_{k+i}}{\|\Theta_{k}\|}\Big) \bigg) \bigg(\sum_{l\in\mathbb{Z}}\frac{\|\Theta_{l}\|^{\alpha}}{\|\Theta\|_{\alpha}^{\alpha }} \mathds{1}_{ A}\Big(\frac{\Theta_{l+i}}{\|\Theta_{l}\|}\Big)\bigg)\bigg]\\
	&= \sum_{j\in\Z}\int_{l_\alpha} P{\{\|Y_0\|>\|\theta_j\|^{-1}\}} \bigg(\sum_{k\in\mathbb{Z}}\frac{\|\theta_{k}\|^{\alpha}}{\|\theta\|_{\alpha}^{\alpha }} \mathds{1}_{B}\Big(\frac{\theta_{k+i}}{\|\theta_{k}\|}\Big) \bigg)  \Big(\sum_{l\in\mathbb{Z}}\frac{\|\theta_{l}\|^{\alpha}}{\|\theta\|_{\alpha}^{\alpha }} \mathds{1}_{ A}\Big(\frac{\theta_{l+i}}{\|\theta_{l}\|}\Big)\bigg) \,P^{\Theta}(d\theta)\\
	&=\sum_{j\in\Z} E\bigg[  ( \|\Theta_j\|^{\alpha}\wedge 1) \bigg(\sum_{k\in\mathbb{Z}}\frac{\|\Theta_{k}\|^{\alpha}}{\|\Theta\|_{\alpha}^{\alpha }} \mathds{1}_{B}\Big(\frac{\Theta_{k+i}}{\|\Theta_{k}\|}\Big) \bigg) \bigg(\sum_{l\in\mathbb{Z}}\frac{\|\Theta_{l}\|^{\alpha}}{\|\Theta\|_{\alpha}^{\alpha }} \mathds{1}_{A}\Big(\frac{\Theta_{l+i}}{\|\Theta_{l}\|}\Big)\bigg)\bigg]. &  \qedhere
	\end{align}
\end{proof}

\begin{proof}[Proof of Proposition \ref{prop:asymptotic.of.tng}]
 As mentioned above, the assertion follows readily if we verify the conditions of Theorem 2.1 of \DN. Note that $p_n$ is of the same order as $r_nv_n$ according to Proposition 4.2 of \cite{basrak2009}, since (BC) implies the anti clustering condition \eqref{eq:anticlustering bedingung} (which in fact just controls the cluster size).
	Hence the conditions (A1), (A2) and (MX) of \DN\, immediately follow from our condition (S) and stationarity. Moreover, condition
	(D0) of \DN\, is trivially fulfilled for finite families of sets $A\in\AA$, which suffices here.
	
	As $\mathds{1}_{\{g(W_{n,j})\neq 0 \}}\leq \mathds{1}_{\{ \|X_{j}\|> u_n \}}=\mathds{1}_{\{g_{\R^d}(W_{n,j})\neq 0 \}}$ for all $j=1,\ldots,r_n$ and all $g\in\mathcal{G}$, condition (2.6) of \DN\ is an obvious consequence of Lemma \ref{lemma:covarianz konkreter schaetzer} which yields
	\begin{equation}
	\label{eq:bed.3.15.forTnga.verifikation}
	N_n:=\frac{1}{r_nv_n}E\bigg[\Big(\sum_{j=1}^{r_n} \mathds{1}_{\{ \|X_{j}\|> u_n \}}\Big)^2  \bigg]
	=\Ord(1).
	\end{equation}
	Since the remaining  condition (C) of \DN\ has been proved in Lemma \ref{lemma:covarianz konkreter schaetzer}, the assertion follows.
\end{proof}

\begin{proof}[Proof of Theorem \ref{th:fidiconv}]
	Since $\pnAh=T_{n,A}/T_{n,\mathbb{R}^d} $ and $E[T_{n,A}]=nv_nE(g_A(W_{n,0})\mid \|X_0\|>u_n)$, direct calculations show that
	\begin{align}
	&\sqrt{nv_n}\big(\pnAh-E(g_A(W_{n,0})\mid \|X_0\|>u_n)\big) \\
    & =\sqrt{nv_n} \bigg(\frac{\sqrt{nv_n}\Ztn(A)+nv_nE(g_A(W_{n,0})\mid \|X_0\|>u_n)}{\sqrt{nv_n}\Ztn(\R^d)+nv_n }-E(g_A(W_{n,0})\mid \|X_0\|>u_n)\bigg)\\
	&=\frac{\Ztn(A)-E(g_A(W_{n,0})\mid \|X_0\|>u_n)\Ztn(\mathbb{R}^d)}{1+(nv_n)^{-1/2}\Ztn(\mathbb{R}^d) }\\
& \rightarrow Z(A)-\pA Z(\mathbb{R}^d)
	=\Zp (A)
	\end{align}
uniformly over any finite family of sets $A\in\AA$, where in the last step we have used Proposition \ref{prop:asymptotic.of.tng},  \eqref{eq:ewertconv} and $nv_n\to\infty$.
\end{proof}

\subsection{Proof of Theorem \ref{th:procconv}.}

	The assertion can be concluded from the following proposition by the continuous mapping argument used in the proof of Theorem \ref{th:fidiconv}.

\begin{proposition}
	\label{prop:procconvt}
	If the conditions (RV), (S), (BC), (TC), (C$\Theta$) and (A) are fulfilled, then $\Ztn$ weakly converge to the Gaussian process $Z$ defined in Theorem \ref{th:fidiconv}.
\end{proposition}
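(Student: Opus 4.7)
The plan is to deduce the weak process convergence of $\Ztn$ from the fidi convergence already established in Proposition \ref{prop:asymptotic.of.tng}, combined with an asymptotic equicontinuity/bracketing argument, by appealing to the process-level counterpart of Theorem 2.1 of \cite{DN20} (for which the structural conditions (A1), (A2), (MX), (C) and the moment bound (2.6) have already been verified in the proof of Proposition \ref{prop:asymptotic.of.tng}).

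\emph{Step 1: Reduction to a bracketing condition.} Condition (A)(i)--(ii) provides a surjective parameterization $t\mapsto A_t$ of $\AA$ by $[0,1]^q$ with a coordinate-wise monotonicity property: setting $s_j=0$ in (A)(ii) and using (A)(i) shows that $A_t$ is non-decreasing in each coordinate with respect to set inclusion. Since $g_A$ is itself pointwise non-decreasing in $A$, this lifts to monotonicity of $g_{A_t}$ in $t$. For $\eps>0$ I would partition $[0,1]^q$ into a finite grid of boxes $[s,t]$ with mesh $\delta$; writing $A_s^- = \bigcup_{u\in[0,s)}A_u$ as in (A)(iv), this gives the pointwise bracketing
\begin{equation}
g_{A_s^-}(w)\,\le\,g_{A_u}(w)\,\le\,g_{A_t}(w),\qquad u\in[s,t].
\end{equation}
Thus the asymptotic equicontinuity of $\Ztn$ over $\AA$ reduces to controlling the finitely many bracket widths $g_{A_t}-g_{A_s^-}$ in the variance seminorm used by the D\&N process criterion.

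\emph{Step 2: Bracket widths vanish as $\delta\downarrow 0$.} Applying Lemma \ref{lemma:covarianz konkreter schaetzer} to the difference set $A_t\setminus A_s^-$ in place of $A$, the normalised second moment of the bracket increment converges to
\begin{equation}
\sum_{j\in\Z}E\bigg[(\|\Theta_j\|^\alpha\wedge 1)\Big(\sum_{h\in\Z}\frac{\|\Theta_h\|^\alpha}{\|\Theta\|_\alpha^\alpha}\ind{A_t\setminus A_s^-}\Big(\frac{\Theta_{h+i}}{\|\Theta_h\|}\Big)\Big)^2\bigg].
\end{equation}
Conditions (A)(iv)--(v) together with (C$\Theta$) force this quantity to vanish as the grid is refined, since they rule out any mass of the law of $\Theta_i/\|\Theta_h\|$ on the ``ridges'' $\bigcap_{s\in(t,1]}A_{s^{(k)}}\setminus A_{t^{(k)}}$ and on $\partial A_t^-$ that could survive refinement. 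Conditions (A)(vi)--(vii) play the analogous role at the pre-asymptotic level, ensuring that the law of $X_i/\|X_0\|$ given $\|X_0\|>0$ charges no such ridge (relevant to the $h=0$ summand) and handling the boundary term $\ind{A}(0)$ arising from indices with $|h+i|>s_n$; together with (BC) and (TC) they provide integrable majorants enabling a Pratt-type dominated convergence argument that gives uniformity in $n$.

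\emph{Step 3: Conclude via D\&N.} The bracket-width bound of Step 2 is precisely the moment-type asymptotic equicontinuity hypothesis of the process-level criterion of \cite{DN20}. Combined with the fidi convergence of Proposition \ref{prop:asymptotic.of.tng} and the separability assumption (A)(iii), it yields asymptotic tightness of $\Ztn$ in the supremum norm over $\AA$ and hence weak convergence to the Gaussian process $Z$ of Theorem \ref{th:fidiconv}.

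The main obstacle lies in Step 2: translating the qualitative non-atomicity conditions (A)(iv)--(vii) into a genuinely quantitative vanishing of the bracket widths, uniformly over the finite grid and uniformly in $n$. The delicate point is that the bracket-width variance mixes the pre-asymptotic law of $X_i/\|X_0\|$ (entering through the $h=0$ term and through the $\ind{A}(0)$ contribution for $|h+i|>s_n$) with the limiting law of $\Theta_{h+i}/\|\Theta_h\|$ at the remaining lags, so each piece must be controlled separately. The hypotheses (A)(iv)--(vii) are tailor-made for this, but their interplay with (TC), which truncates clusters to a finite length, and (BC), which controls the overlap of exceedances, requires a careful dominated-convergence passage analogous to the one already carried out in the proof of Lemma \ref{lemma:covarianz konkreter schaetzer}.
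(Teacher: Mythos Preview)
Your outline correctly identifies the overall architecture (fidi convergence plus tightness via a D\&N criterion), and your Step~2 essentially reproduces the paper's verification of condition (D1) of \cite{DN20}: the semi-metric $\rho(A,B)$ arises exactly as the limit you write down, and (A)(iv)--(v) are indeed what make $\rho$ behave well enough for total boundedness and asymptotic equicontinuity. However, Theorem~2.3(i) of \cite{DN20} requires \emph{both} (D1) and the bracketing-entropy condition (D2), and your proposal does not supply the latter. Showing that bracket widths on a uniform mesh vanish as $\delta\downarrow 0$ gives no quantitative control of the bracketing number $N_{[\cdot]}(\eps,\AA,L_2^n)$ as a function of $\eps$: non-atomicity yields continuity, not a rate, so from your construction you cannot bound $\int_0^{\tau_n}\sqrt{\log N_{[\cdot]}(\eps,\AA,L_2^n)}\,d\eps$.

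The paper closes this gap by a different mechanism. For (D2) it works \emph{at fixed $n$} with the pre-asymptotic functions
\[
F_k(t)=\frac{1}{r_nv_n}E\Big[\Big(\sum_{l=1}^{r_n}\Ind{\|X_l\|>u_n}\sum_{h\in H_n}\tfrac{\|X_{l+h}\|^\alpha}{\sum_j\|X_{l+j}\|^\alpha}\ind{A_{t^{(k)}}}\big(\tfrac{X_{l+h+i}}{\|X_{l+h}\|}\big)\Big)^2\Big],
\]
shows via (A)(vi) that each $F_k$ is a measure-generating function on $[0,1]$ of total mass $\le N_n$, and then chooses partition points \emph{adaptively} so that every subinterval carries mass at most $(\eps/q)^2$. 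This yields $\eps$-brackets in $L_2^n$ with $N_{[\cdot]}(\eps,\AA,L_2^n)\le (N_nq^2/\eps^2+3)^q$, hence a polynomial entropy bound and a vanishing entropy integral. Condition (A)(vii) enters here for a reason different from the one you describe: it guarantees that within each bracket either all sets contain $0$ or none do, so the boundary term $\ind{A}(0)$ from indices $h\in H_n^c$ cancels in the bracket increment and does not spoil the $F_k$-based bound. In short, your (D1)-type argument is on the right track, but (D2) needs the adaptive measure-theoretic construction rather than a uniform grid, and that is where (A)(vi)--(vii) are actually used.
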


\begin{proof}
	We are going to apply Theorem 2.3 (i) of \cite{DN20} (D\&N). Since condition (D0) of \DN\ follows immediately from condition (A) (iii), and the other conditions used in Theorem 2.1 of \DN\ have already been verified in the proof of Proposition \ref{prop:asymptotic.of.tng}, only the conditions (D1) and (D2) of \DN\ remain to be shown  for some semi-metric on $\GG$ such that $\mathcal{G}$ is totally bounded. 	
	
	First we verify that $\mathcal{G}$ is totally bounded w.r.t. $\tilde{\rho}(g_A,g_B):=\rho (A,B)$, where the semi-metric $\rho$ on $\mathcal{A}$ is given by
	\begin{align}
	\rho(A,B)&:=\sum_{j\in\mathbb{Z}}E\bigg[  \mathds{1}_{\left\{\|Y_j\|>1\right\}} \sum_{h\in\mathbb{Z}}\frac{\|Y_{h}\|^{\alpha}}{\|Y\|_{\alpha}^{\alpha }} \bigg|\mathds{1}_{ A}\left(\frac{Y_{h+i}}{\|Y_{h}\|}\right)-\mathds{1}_{B}\left(\frac{Y_{h+i}}{\|Y_{h}\|}\right) \bigg| \bigg].
	\end{align}
Note that $\rho(A,B)=\rho(A\backslash B,\emptyset)$ for $B\subset A$
and $\rho(\bigcup_{n\in\mathbb{N}}B_n,\emptyset)=\sum_{n\in\mathbb{N}}\rho(B_n,\emptyset)$ for disjoint sets $B_n\in \mathbb{B}^d$, $n\in\mathbb{N}$.
	
	Fix $\delta>0$ and, for the time being, $k\in\{1,\ldots,q\}$. Recall that, for $t\in[0,1+\iota]$, $t^{(k)}\in[0,1]^q$ denotes the vector with $k$-th coordinate equal to $t\wedge 1$ and all other coordinates equal to 1. By condition (A) (ii) the mapping $H_k:t\mapsto \rho(A_{t^{(k)}},\emptyset)$ is non-decreasing. Observe that the arguments used in the proof of Lemma \ref{lemma:umformulierung.e3} applied to condition (A)~(v) show that $P\{\exists h\in\Z: Y_{h+i}/\|Y_h\|\in \bigcap_{s\in(t,1]}A_{s^{(k)}}\setminus A_{t^{(k)}}\}=0$ for all $t\in[0,1)$. By the monotone convergence theorem, this implies that $H_k$ is right-continuous and thus it is the measure generating function of a measure on $[0,1]$, which is finite because $\rho(A_1,\emptyset) =c(\mathbb{R}^d,\mathbb{R}^d)<\infty $ by Lemma \ref{lemma:covarianz konkreter schaetzer}. Thus, there exist $J_k<\infty$ and a partition $\mathcal{T}_k:=\big\{[s_{k,j-1},s_{k,j})\mid 1\le j\le J_k\big\}\cup\{\{1\}\}$
 of $[0,1]$ (with $s_{k,0}=0$ and $s_{k,J_k}=1$) such that $H_k(s_{k,j}-)-H_k(s_{k,j-1})\le\delta$ for all $1\le j\le J_k$. (For instance, one may define the interval boundaries iteratively by
 $ s_{k,j}=\inf\big\{t\in (s_{k,j-1},1]\mid \rho\big(A_{t^{(k)}},A_{s_{k,j-1}^{(k)}}\big)> \delta\big\}$.
  Observe that although it is possible that the measure pertaining to $H_k$ has mass greater than $\delta$ at some of the boundary points $s_{k,j}$, such jumps of $H_k$ do not play any role in  the following calculations.)

  We now consider the finite cover of $[0,1]^q$ by sets in
   $$\mathcal{T}^{(\delta)}:=\{\times_{k=1}^q T_k\mid T_k\in\mathcal{T}_k,\;\forall\,1\le k\le q\}.$$
   For any set $T=\times_{k=1}^q T_k\in \mathcal{T}^{(\delta)}$ define $K_T:=\big\{1\le k\le q\mid T_k\ne\{1\}\big\}$,
   \begin{equation} \label{eq:setboundsdef}
     \underline{A}_T := \bigcap_{s\in T} A_s = A_{(\min T_1,\ldots, \min T_q)}\in\AA \quad \text{and} \quad \bar A_T := \bigcup_{s\in T} A_s\in\tilde\AA:=\{A_t^- \mid t\in[0,1+\iota]^q\}.
   \end{equation}
   Check that $\bar A_T\setminus\underline{A}_T \subset \bigcup_{k\in K_T} \big( \bigcup_{s\in T_k} A_{s^{(k)}} \setminus A_{(\min T_k)^{(k)}}\big)$. Hence, by construction of $\mathcal{T}^{(\delta)}$, for all $t\in T$,
 	\begin{align}  \label{eq:rhobound1}
	\rho\big(A_t,\underline{A}_T \big) &\leq \rho\big( \bar A_T\setminus \underline{A}_T,\emptyset \big)
\le \sum_{k\in K_T} H_k(\sup T_k-)-H_k(\min T_k) \le q\delta.
	\end{align}
   Therefore, all sets of the form $\{A_t|t\in T\}$ have a radius of at most $q\delta$ w.r.t.\ $\rho$, which shows that
    $\AA$ is totally bounded w.r.t.\ $\rho$ and thus $\GG$ is totally bounded w.r.t.\ $\tilde\rho$.

	Next we turn to the continuity condition (D1) of \DN. Let $V_{n}(g_A):=\sum_{t=1}^{r_n}g_A(W_{n,t})$. According to \eqref{eq:termIInegligible} and \eqref{eq:covlimitrep1}, one has
	\begin{align}
	&\frac{1}{r_nv_n}E\left[ ( V_n(g_A)- V_n(g_B))^2\right]\\
	&\rightarrow   \sum_{j\in\mathbb{Z}}E\bigg[  \mathds{1}_{\{\|Y_j\|>1\}}  \bigg(\sum_{l\in\mathbb{Z}}\frac{\|Y_{l}\|^{\alpha}}{\|Y\|_{\alpha}^{\alpha }} \left(\mathds{1}_{ A}\left(\frac{Y_{l+i}}{\|Y_{l}\|}\right)-\mathds{1}_{B}\left(\frac{Y_{l+i}}{\|Y_{l}\|}\right)\right)\bigg)^2\bigg]\\
	&\leq  \sum_{j\in\mathbb{Z}}E\Bigg[  \mathds{1}_{\left\{\|Y_j\|>1\right\}}  \sum_{l\in\mathbb{Z}}\frac{\|Y_{l}\|^{\alpha}}{\|Y\|_{\alpha}^{\alpha }} \bigg|\mathds{1}_{ A}\left(\frac{Y_{l+i}}{\|Y_{l}\|}\right)-\mathds{1}_{B}\left(\frac{Y_{l+i}}{\|Y_{l}\|}\right)\bigg|\Bigg]
	=\rho(A,B)
	\end{align}
	 for all $A,B\in\mathcal{A}$. In view of condition (A) (iv), one can prove by the same arguments that this convergence even holds for all $A,B\in\AA\cup\tilde\AA$. In particular, for all fixed $\delta,\eps>0$ one has eventually
\begin{align} \label{eq:bound.for.d1}
	\frac{1}{r_nv_n}E\left[ \big( V_n(g_{\bar A_T})- V_n(g_{\underline{A}_S})\big)^2\right] \le \rho(\bar A_T,\underline{A}_S)+\frac{\eps}2
\end{align}	
	uniformly for all sets $S,T\in\mathcal{T}^{(\delta)}$.

Now consider two sets $A_s,A_t\in\mathcal{A}$ with $\rho(A_s,A_t)\le\delta$.
Because  $\mathcal{T}^{(\delta)}$ defines a partition of $[0,1]^q$, there exist unique sets $S,T\in\mathcal{T}^{(\delta)}$ such that $s\in S$ and $t\in T$.
Then, obviously $\underline{A}_T\subset A_t\subset \bar A_T$ and $\underline{A}_S\subset A_s\subset \bar A_S$. Hence, we may conclude from \eqref{eq:bound.for.d1} and \eqref{eq:rhobound1} that eventually
	\begin{align}
	\frac{1}{r_nv_n}& E\left[ ( V_n(g_{A_t})- V_n(g_{A_s}))^2\right]\\
& \le \frac{1}{r_nv_n}E\left[ \max\big(( V_n(g_{\bar A_T})- V_n(g_{\underline{A}_S}))^2, ( V_n(g_{\underline{A}_T})- V_n(g_{\bar{A}_S}))^2 \big) \right]\\
	&\leq \frac{1}{r_nv_n}E\left[ ( V_n(g_{\bar A_T}) - V_n(g_{\underline{A}_S}))^2\right] + \frac{1}{r_nv_n}E\left[ ( V_n(g_{\underline{A}_T})- V_n(g_{\bar{A}_S}))^2\right]\\
	&\leq \rho(\bar A_T,\underline{A}_S) + \rho(\underline{A}_T,\bar{A}_S) + \eps\\
	&\leq \rho(\bar A_T,A_t)+\rho(A_t,A_s)+\rho(A_s,\underline{A}_S) + \rho(\underline{A}_T,A_t) +\rho(A_t,A_s) +\rho(A_s,\bar{A}_S)+  \eps\\
    &\leq 2\big(\rho(\bar A_T,\underline{A}_T)+\rho(\bar A_S,\underline{A}_S)+ \rho(A_t,A_s)\big)+  \eps\\
    & \le (4q+2)\delta + \eps.
  \end{align}
	Therefore,
	\begin{align}
	&\lim_{\delta\downarrow 0}\limsup_{n\to\infty} \sup_{s,t\in[0,1]^q:\rho(A_s,A_t)\le\delta}\frac{1}{r_nv_n}E\left[ ( V_n(g_{A_s})- V_n(g_{A_t}))^2\right]=0
	\end{align}
	and condition (D1) is satisfied.
	
	Finally we turn to condition (D2) of \DN.
	By condition (A) (vii) there exists $w=(w_k)_{1\le k\le q}\in [0,1]^q$ such that  $0\in A_w$, but $0\not\in A_s$ for all $s\lneq w$. By the monotonicity assumption (A) (ii) it follows that $0\in A_s$ if $s\ge w$.  Next we show that here even equivalence holds. To this end, suppose that $0\in A_s$ for some $s$ for which $s\ge w$ does not hold. Then on the one hand $0\in A_s\setminus A_{s\wedge w}$, but on the other hand $0\not\in A_{s\vee w}\setminus A_w$. However, this contradicts condition (A) (ii), which implies that  $A_s\setminus A_{s\wedge w}\subset A_{s\vee w}\setminus A_w$, as can be easily seen by successively replacing each $s_k<w_k$ with $w_k$.

   Fix some $n\in\N$ and let $\eps>0$ be arbitrary. In the same way as above, one may conclude from conditions (A) (ii) and (vi) that,	for all $1\le k\le q$, the function $F_k:[0,1]\to \mathbb{R}$,
	$$t\mapsto \frac{1}{r_nv_n}E\bigg[\bigg(\sum_{l=1}^{r_n}\ind{\{\|X_l\|>u_n\}}\sum_{h\in H_{n}}\frac{\|X_{l+h}\|^\alpha}{\sum_{j=-s_n}^{s_n}\|X_{l+j}\|^\alpha}\ind{A_{t^{(k)}}}
\Big(\frac{X_{l+h+i}}{\|X_{l+h}\|}\Big)\bigg)^2\bigg]$$
	is the measure generating function of a measure on $[0,1]$ with mass $F_k(1)\le N_n$ (cf.\ \eqref{eq:bed.3.15.forTnga.verifikation}).
	Hence, one can choose $t_{k,j}\in[0,1]$, $0\le j\le J_k^*\le\ceil{N_n(q/\eps)^2}+1$, such that $t_{k,0}:=0$, $t_{k,J_k^*}=1$, $w_k\in \{t_{k,j}|1\le j\le J_k^*\}$, $t_{k,j-1}< t_{k,j}$ and $F_k(t_{k,j}-)-F_k(t_{k,j-1})\le (\eps/q)^{2}$ for all $1\le j\le J_k^*$. These points define a partition $\mathcal{T}^*_k:=\{[t_{k,j-1},t_{k,j})|1\le j\le J_k^*\}\cup\{\{1\}\}$ of $[0,1]$ and hence $\mathcal{T}^*:=\{\times_{k=1}^q T_k\mid T_k\in\mathcal{T}_k^*,\forall\,1\le k\le q\}$ is a partition of $[0,1]^q$.

Now we define brackets
 $$\mathcal{A}^{\eps,n}_T:=\{A_t\mid t\in T\}$$
	for all $T\in\mathcal{T}^*$, so that the set $\mathcal{A}^{\epsilon,n}=\{\mathcal{A}^{\eps,n}_T \mid  T\in\mathcal{T}^*\}$ is a partition of $\mathcal{A}$.	
	Since $0\in A_s$ if and only if $s\geq w$, the above construction  ensures that, for all $\bar\AA\in \mathcal{A}^{\epsilon,n}$, either all or none of the sets in $\bar\AA$ contain 0. Consequently,
 $\ind{\{0\in A\}}=\ind{\{0\in B\}}$ for all $A,B\in\bar\AA$.
	
Using the Cauchy-Schwarz inequality for sums and $(a-b)^2\le a^2-b^2$ for all $a\ge b\ge 0$, we conclude similarly as above that, for all $T=\times_{k=1}^q T_k\in\mathcal{T}^* $ and $A_{T_k}^+:=\bigcup_{s\in T_k} A_{s^{(k)}}$,
	\begin{align}
	&\frac{1}{r_nv_n}E\bigg[\sup_{A,B\in\mathcal{A}^{\eps,n}_T} \Big(\sum_{l=1}^{r_n}\big(g_A(W_{n,l})-g_B(W_{n,l})\big)\Big)^2\bigg]\\
   & \le \frac{1}{r_nv_n}E\bigg[ \Big(\sum_{l=1}^{r_n}\big(g_{\bar A_T}(W_{n,l})-g_{\underline{A}_T}(W_{n,l})\big)\Big)^2\bigg]\\
  &\leq \frac{1}{r_nv_n} E\bigg[\bigg( \sum_{k\in K_T}\sum_{l=1}^{r_n}\ind{\{\|X_l\|>u_n\}}\sum_{h\in H_n}\frac{\|X_{l+h}\|^\alpha}{\sum_{j=-s_n}^{s_n}\|X_{l+j}\|^\alpha} \ind{A_{T_k}^+\setminus A_{(\min T_{k})^{(k)}}} \Big(\frac{X_{t+h+i}}{\|X_{t+h}\|}\Big)\bigg)^2\bigg]\\
	&\leq \frac{|K_T|}{r_nv_n}\sum_{k\in K_T}E\bigg[\bigg(\sum_{l=1}^{r_n}\ind{\{\|X_l\|>u_n\}}\sum_{h\in H_n}\frac{\|X_{l+h}\|^\alpha}{\sum_{j=-s_n}^{s_n}\|X_{l+j}\|^\alpha} \ind{A_{T_k}^+\setminus A_{(\min T_{k})^{(k)}}} \Big(\frac{X_{t+h+i}}{\|X_{t+h}\|}\Big)\bigg)^2\bigg]\\
	&\leq |K_T|\sum_{k\in K_T}\frac{1}{r_nv_n} E\bigg[\bigg(\sum_{l=1}^{r_n}\ind{\{\|X_l\|>u_n\}}\sum_{h\in H_n}\frac{\|X_{l+h}\|^\alpha}{\sum_{j=-s_n}^{s_n}\|X_{l+j}\|^\alpha} \ind{A_{T_k}^+} \Big(\frac{X_{t+h+i}}{\|X_{t+h}\|}\Big)\bigg)^2\bigg]\\
	&\hspace{2cm}-E\bigg[\bigg(\sum_{l=1}^{r_n}\ind{\{\|X_l\|>u_n\}}\sum_{h\in H_n}\frac{\|X_{l+h}\|^\alpha}{\sum_{j=-s_n}^{s_n}\|X_{l+j}\|^\alpha} \ind{ A_{(\min T_{k})^{(k)}}} \Big(\frac{X_{t+h+i}}{\|X_{t+h}\|}\Big)\bigg)^2\bigg] \Bigg)\\
& = |K_T|\sum_{k\in K_T} \big( F_k(\sup T_k-)-F_k(\min{T}_k)\big)
	\le |K_T|\sum_{k\in K_T}\eps^2 q^{-2}
	\le \eps^2.
\label{eq:calculation.for.d2.brackets}
	\end{align}
	Hence, $\mathcal{A}^{\eps,n}_T$ is indeed an $\eps$-bracket wr.t.\ the $L_2^n$-metric considered in condition (D2) of D{\&}N. Since there are $\prod_{k=1}^q (J_k^*+1)$ brackets,
 the bracketing number $N_{[\cdot]}(\eps,\mathcal{A},L_2^n)$ can be bounded by $\prod_{k=1}^q (J_k^*+1) \leq (N_n q^2/\eps^2+3)^q$. Now recall from \eqref{eq:bed.3.15.forTnga.verifikation} that $N_n$ is stochastically bounded. Thus,  for sufficient large $n$  and a suitable constant $c_1>0$,  it follows
	\begin{align}
	&\int_{0}^{\tau_n} \sqrt{ \log N_{[\cdot]}(\eps,\mathcal{A},L_2^n)}\,d\eps
	\leq \int_{0}^{\tau_n} \sqrt{c_1 +\log N_n - 2q\log \eps}\, d\eps
	\end{align}
	which tends to 0 in probability as $n\to\infty$ for all sequences $\tau_n \to 0$. Hence, (D2) is satisfied.
	
	To sum up, we have shown that all conditions of Theorem 2.3 (i) of \DN\ are fulfilled, which yields the assertion.
\end{proof}

\begin{proof}[Proof of Remark \ref{rem:processcon.linear}]
	The remark follows by completely analogous arguments as  Theorem \ref{th:procconv}, if  we  apply part (ii) of Theorem 2.3 of \DN\ (instead of part (i)). Then condition (D2) of \DN\ is not needed any more, but instead we have to verify (D3) of D{\&}N. Note that, by (A) (ii), the sets in $\AA$ are linearly ordered in the case $q=1$. The same then holds true for the functions in $\GG$, which is thus a $VC(2)$-class. This in turn implies (D3) of \DN\ (cf.\ Remark 2.11 of \cite{drees2010}).

Note that the conditions (A) (vi) and (vii) were only used for the proof of (D2) and are hence not needed in the case $q=1$.
\end{proof}

\subsection{Proof of Theorem \ref{th:procconvunknownalpha}.}

Define the function $\phi$ on $l_\alpha$ by $\phi((x_h)_{h\in\Z}):=\log^+ \|x_0\|$ and let $b_n(\phi):=(nv_n/p_n)^{1/2}$.

	The weak convergence of the first summand in the representation
	\begin{align}
	\sqrt{nv_n}& \big(\pnAhh-\pA\big)_{A\in\mathcal{A}}\\
 &= \sqrt{nv_n}(\pnAh-\pA)_{A\in\mathcal{A}} +\sqrt{nv_n}\big(\pnAhh-\pnAh\big)_{A\in\mathcal{A}}
   \label{eq:unknown.alpha.erste.zerlegung}
	\end{align}
	has already be shown in Theorem \ref{th:procconv}.
	
	To deal with the second term, we first analyze the asymptotic behavior of the estimator  $\alphanh=T_{n,\mathbb{R}^d}/\tilde T_{n,\phi}$ with $\tilde T_{n,\phi}:=\sum_{t=1}^{n}\phi(W_{n,t})=\sum_{t=1}^{n}\log^+(\|X_{t}\|/u_n)$.  Let $\Znphi:=(nv_n)^{-1/2}(\tilde T_{n,\phi}-E[\tilde T_{n,\phi}])$. With similar arguments as in the proof of Proposition \ref{prop:procconvt}, we will show weak convergence of $(\Ztn,\Znphi)$ to the centered Gaussian process $(Z,\Zphi)$ defined in Theorem \ref{th:procconvunknownalpha}. Note that to this end  it suffices to prove convergence of the fidis, because asymptotic tightness follows immediately from Proposition \ref{prop:procconvt}. Since $\phi$ is unbounded, here we apply a modification of Theorem 2.4 of \cite{DN20} (D{\&}N) (see Theorem 7.1 in the Supplement) instead of Theorem 2.3.
	To ease the formulas, in what follows we use the notation
	$$ X_{n,t} := \frac{X_t}{u_n}. $$
	Check that $E\big((\log^+\|X_{n,0}\|)^{\eta} \mid \|X_0\|>u_n\big)=\Ord(1)$ for $\eta\geq 0$ due to regular variation of $\|X_0\|$. By condition (BC') and stationarity, we obtain
	\begin{align}	
 \frac{1}{r_nv_n} & E\bigg(\Big(\sum_{t=1}^{r_n}\log^+\|X_{n,t}\|\Big)^{2}\bigg)
	= \sum_{s=1}^{r_n}\sum_{t=1}^{r_n}\frac{1}{r_nv_n} E\big(\log^+\|X_{n,s}\|\cdot\log^+\|X_{n,t}\|\big)\\
	&\leq 2\sum_{k=0}^{r_n}E\big(\log^+\|X_{n,0}\|\cdot\log^+\|X_{n,k}\|\mid \|X_0\|>u_n\big)
	= 2\sum_{k=1}^{r_n}e'_n(k)+\Ord(1)=\Ord(1),
	\end{align}
	that is, condition (7.2) of Theorem 7.1 in the Supplement is satisfied. Markov's inequality yields
	\begin{align}
		P\Big\{\sum_{t=1}^{r_n}\log^+\|X_{n,t}\|>\epsilon \sqrt{nv_n}\Big\} \leq \frac{1}{\eps^2nv_n}E\bigg(\Big(\sum_{t=1}^{r_n}\log^+\|X_{n,t}\|\Big)^2\bigg)
		=\Ord\Big(\frac{r_n}{n}\Big)
	\end{align}
	for all $\eps>0$. Applying H\"{o}lder's inequality, stationarity and condition (M) (i), we conclude for $\delta >0$
	\begin{align}
		&E\bigg(\Big(\sum_{t=1}^{r_n}\log^+\|X_{n,t}\|\Big)^2\Ind{\sum_{j=1}^{r_n}\log^+\|X_{n,j}\|>\eps \sqrt{nv_n}}\bigg)\\
		&\leq \sum_{s=1}^{r_n}\sum_{t=1}^{r_n}E\big((\log^+\|X_{n,s}\|\cdot\log^+\|X_{n,t}\|)^{1+\delta} \big)^{1/(1+\delta)}P\Big\{\sum_{j=1}^{r_n}\log^+\|X_{n,j}\|>\eps \sqrt{nv_n}\Big\}^{\delta/(1+\delta)}\\
		&\le 2r_nv_n^{1/(1+\delta)}\sum_{k=0}^{r_n}E\big((\log^+\|X_{n,0}\|\cdot\log^+\|X_{n,k}\|)^{1+\delta} \mid \|X_0\|>u_n\big)^{1/(1+\delta)} \cdot\Ord\Big(\Big(\frac{r_n}n\Big)^{\delta/(1+\delta)}\Big)\\
		&=\Ord\bigg(r_nv_n^{1/(1+\delta)}\Big(\frac{r_n}{n}\Big)^{\delta/(1+\delta)}\bigg)
		= \Ord\bigg( r_nv_n \Big(\frac{r_n}{nv_n}\Big)^{\delta/(1+\delta)}\bigg)=\ord(r_nv_n).
	\end{align}
	Thus condition (L2) of \DN\ is satisfied for $\phi$, which implies condition (L) of \DN\ (cf.\ Supplement of that paper). All remaining conditions of Theorem 7.1  (i) in the Supplement have been verified in the proof of  Proposition \ref{prop:asymptotic.of.tng}, except for the convergence of the standardized variance of $\Znphi$ and the covariance of $\Znphi$ and $\Ztn(A)$ for all $A\in\mathcal{A}$, which is proved in Lemma \ref{lemma:covaraince.pb.unknown.alpha}. 
	Hence we may conclude that
	\begin{align}
	\big((\Ztn(A))_{A\in \mathcal{A}},\Znphi\big)\xrightarrow{w} \big((Z(A))_{A\in \mathcal{A}},\Zphi\big).
	\end{align}

	By similar arguments as in the proof of Theorem \ref{th:fidiconv} using the bias conditions \eqref{eq:unifbiascond} and (H), we conclude
	\begin{equation}
	\label{eq:joint.convergence.pb.alpha}
	\sqrt{nv_n}\big((\pnAh-\pA)_{A\in\mathcal{A}}, \alphanh-\alpha \big)\xrightarrow{w} \big((\Zp(A))_{A\in\AA},Z_\alpha\big),
	\end{equation}
	with $Z_\alpha:=\alpha Z(\mathbb{R}^d)-\alpha^2\Zphi$ (cf.\ \cite{drees2015}, Lemma 4.4).
	
	For $A\in\AA$, define a function $p_{n,A}:(0,\infty)\to \mathbb{R}$ by
	\begin{align}
	p_{n,A}(a)
	& := \frac 1{\sum_{t=1}^n  \Ind{\|X_t\|>u_n}} \sum_{t=1}^n
	\Ind{\|X_t\|>u_n} \sum_{h= -s_n}^{s_n} \frac{\|X_{n,t+h}\|^a}{\sum_{k=-s_n}^{s_n} \|X_{n,t+k}\|^a}\\
	&  \hspace{3cm}\times \bigg( \Ind{h\in H_n} \ind{A}\Big(\frac{X_{t+h+i}}{\|X_{t+h}\|}\Big)
	+ \Ind{h\in H_n^c} \ind{A}(0) \bigg)
	\end{align}
	so that $\pnAh=p_{n,A}(\alpha)$ and $\pnAhh=p_{n,A}(\alphanh)$. A Taylor expansion yields
	\begin{align}
	&\pnAhh-\pnAh\\
	&= \frac{\hat{\alpha}_n-\alpha}{\sum_{t=1}^{n}\mathds{1}_{\{\|X_t\|>u_n\}}} \sum_{t=1}^{n} \mathds{1}_{\{\|X_t\|>u_n\}} \sum_{h=-s_n}^{s_n}\bigg( \frac{\log(\|X_{n,t+h}\|)\|X_{n,t+h}\|^{\alpha}}{\sum_{k=-s_n}^{s_n} \|X_{n,t+k}\|^{\alpha }}\\
	&\hspace{4.5cm}  - \frac{\|X_{n,t+h}\|^{\alpha} \sum_{k=-s_n}^{s_n}\log(\|X_{n,t+k}\|)\|X_{n,t+k}\|^{\alpha}}{(\sum_{k=-s_n}^{s_n} \|X_{n,t+k}\|^{\alpha })^2}\bigg)\\
	&\hspace{4.5cm}\times \bigg( \Ind{h\in H_n} \ind{A}\Big(\frac{X_{t+h+i}}{\|X_{t+h}\|}\Big)
	+ \Ind{h\in H_n^c} \ind{A}(0) \bigg) \\
	&  +\frac{(\alphanh-\alpha)^2}{2\sum_{t=1}^{n}\mathds{1}_{\{\|X_t\|>u_n\}}} \sum_{t=1}^{n} \mathds{1}_{\{\|X_t\|>u_n\}} \sum_{h=-s_n}^{s_n}\bigg( \frac{\log^2(\|X_{n,t+h}\|)\|X_{n,t+h}\|^{\bar{\alpha}}}{\sum_{k=-s_n}^{s_n} \|X_{n,t+k}\|^{\bar{\alpha} }}\\
	&\hspace{4cm} -2\frac{\log(\|X_{n,t+h}\|)\|X_{n,t+h}\|^{\bar{\alpha}} \sum_{k=-s_n}^{s_n}\log(\|X_{n,t+k}\|)\|X_{n,t+k}\|^{\bar{\alpha}}}{(\sum_{k=-s_n}^{s_n} \|X_{n,t+k}\|^{\bar{\alpha} })^2}\\
	& \hspace{4cm}-\frac{\|X_{n,t+h}\|^{\bar{\alpha}} \sum_{k=-s_n}^{s_n}\log^2(\|X_{n,t+k}\|)\|X_{n,t+k}\|^{\bar{\alpha}}}{(\sum_{k=-s_n}^{s_n} \|X_{n,t+k}\|^{\bar{\alpha} })^2}\\
    & \hspace{4cm} +2\frac{\|X_{n,t+h}\|^{\bar{\alpha}} (\sum_{k=-s_n}^{s_n}\log(\|X_{n,t+k}\|)\|X_{n,t+k}\|^{\bar{\alpha}})^2}{(\sum_{k=-s_n}^{s_n} \|X_{n,t+k}\|^{\bar{\alpha} })^3}\bigg)\\
	&\hspace{4.5cm}\times \bigg( \Ind{h\in H_n} \ind{A}\Big(\frac{X_{t+h+i}}{\|X_{t+h}\|}\Big)
	+ \Ind{h\in H_n^c} \ind{A}(0) \bigg) \label{eq:unknown.alpha.zweite.zerlegung}\\
	&=:I(A) + II(A),	
	\end{align}
	with $\bar{\alpha}=\bar{\alpha}_n=\lambda_n \alpha +(1-\lambda_n)\alphanh$ for some (random) $\lambda_n\in(0,1)$. 
Define functions $f_A^{(m)}$ on $l_\alpha\to\R$ by
\begin{align}
	f_A^{(m)}&\big((w_h)_{h\in\Z}\big)
	:=\Ind{\|w_0\|>1} \sum_{|h|\leq m}\bigg( \frac{\log(\|w_h\|)\|w_h\|^{\alpha}}{\sum_{|k|\leq m} \|w_k\|^{\alpha }}\\
   &\hspace{5cm}
  -\frac{\|w_h\|^{\alpha} \sum_{|k|\leq m} \log(\|w_k\|)\|w_k\|^{\alpha}}{(\sum_{|k|\leq m} \|w_k\|^{\alpha })^2}\bigg) \ind{A}\Big(\frac{w_{h+i}}{\|w_h\|}\Big)
  \label{eq:def.f.unknown.alpha}
	\end{align}
 and $f_A:=f_A^{(\infty)}$ with the convention $\sum_{|h|\leq \infty}=\sum_{h\in\mathbb{Z}}$.
In the Supplement it is shown that the constant $d_A$ defined in Theorem \ref{th:procconvunknownalpha} equals $E[f_A(Y)]$ (with ($Y=(Y_h)_{h\in\Z}$). Let
	\begin{align}
	\label{eq:def.cna}
	d_n(A):=\frac{1}{nv_n}\sum_{t=1}^{n}f_A(W_{n,t}).
	\end{align}
A combination of Lemma \ref{lemma:unknown.alpha.cna.gleichmaessig} and Lemma \ref{lemma:unknown.alpha.approximating.error} (given below) shows that $\sup_{A\in\AA}|d_n(A)-d_A|=\ord_P(1)$. Because $nv_n/\sum_{t=1}^{n}\mathds{1}_{\{\|X_t\|>u_n\}}\to 1$ in probability (which is immediate from  Proposition \ref{prop:asymptotic.of.tng}), it follows from convergence \eqref{eq:joint.convergence.pb.alpha} that
\begin{align}
	\sqrt{nv_n}I(A) =\sqrt{nv_n}(\hat{\alpha}_n-\alpha)\frac{nv_n}{\sum_{t=1}^{n}\mathds{1}_{\left\{\|X_t\|>u_n\right\}}}d_n(A) \xrightarrow{w} d_AZ_\alpha
\end{align}
uniformly for all $A\in\mathcal{A}$.
By Lemma \ref{lemma:unknow.alpha.part.ii}, the term $II(A)$ is asymptotically negligible. Hence, we may conclude from \eqref{eq:unknown.alpha.zweite.zerlegung} that
	
	\begin{align}
	\sqrt{nv_n}\left(\pnAhh-\pnAh\right)_{A\in\mathcal{A}}\rightarrow (d_AZ_\alpha)_{A\in\mathcal{A}}.
	\end{align}
    Indeed, the arguments show that this convergence holds jointly with the first component in \eqref{eq:joint.convergence.pb.alpha}.
	Hence, by \eqref{eq:unknown.alpha.erste.zerlegung} and the continuous mapping theorem
	\begin{align}
	\sqrt{nv_n}\left(\pnAhh-\pA\right)_{A\in\mathcal{A}}
	&\xrightarrow{w}(\Zp(A)+d_AZ_\alpha)_{A\in\mathcal{A}}\\
	&=\big(Z(A)-(P\{\Theta_i\in A\}-\alpha d_A)Z(\R^d)-\alpha^2 d_A\Zphi\big)_{A\in\AA}. \qed
	\end{align}

The Lemmas \ref{lemma:unknown.alpha.ewertkonvergenz}--\ref{lemma:unknown.alpha.approximating.error} establish uniform convergence of $d_n(A)-d_A$ to 0 in probability.
\begin{lemma}
	\label{lemma:unknown.alpha.ewertkonvergenz}
	Under Conditions  (RV), (S) and (C$\Theta$), we have
	\begin{align}
\lim_{n\to\infty}\frac{1}{v_n}E[f_A^{(m)}(W_{n,0})]=E[f_A^{(m)}(Y)]=:d_A^{(m)}\in\R
	\end{align}
	for all $A\in\mathcal{A}$ and all $m\in\mathbb{N}$, with $f_A^{(m)}$ defined by \eqref{eq:def.f.unknown.alpha}.
\end{lemma}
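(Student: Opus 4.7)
The plan is to exploit the scale invariance of $f_A^{(m)}$ (modulo its outer indicator) to recast the claim as weak convergence of the angular components of $X$ given an exceedance, then to promote weak convergence to expectation convergence via a truncation of the logarithm. The first step is to observe that, apart from $\Ind{\|w_0\|>1}$, the function $f_A^{(m)}(w)$ is invariant under the rescaling $w\mapsto cw$, $c>0$: each weight $\|w_h\|^\alpha/\sum_{|k|\le m}\|w_k\|^\alpha$, the centered logarithm $\log\|w_h\|-\sum_k(\|w_k\|^\alpha/\sum_j\|w_j\|^\alpha)\log\|w_k\|$, and the indicator $\ind{A}(w_{h+i}/\|w_h\|)$ is homogeneous of degree zero. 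Hence $f_A^{(m)}(w)=\Ind{\|w_0\|>1}\,h_A^{(m)}(w/\|w_0\|)$ for a function $h_A^{(m)}$ depending only on the angular components $(w_h/\|w_0\|)_{|h|\le m+|i|}$. Substituting $w=W_{n,0}$ and using $P\{\|X_0\|>u_n\}=v_n$ yields
\[
\frac{1}{v_n}E\bigl[f_A^{(m)}(W_{n,0})\bigr] = E\bigl[h_A^{(m)}\bigl((X_h/\|X_0\|)_{|h|\le m+|i|}\bigr)\,\bigm|\,\|X_0\|>u_n\bigr],
\]
while, since $\|Y_0\|\ge 1$ almost surely, $E[f_A^{(m)}(Y)] = E[h_A^{(m)}((\Theta_h)_{|h|\le m+|i|})]$.

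Next I would invoke the definition \eqref{eq:tailprocess} of the tail process together with the identity $\Theta_h=Y_h/\|Y_0\|$; the continuous mapping theorem then yields the weak convergence
\[
(X_h/\|X_0\|)_{|h|\le m+|i|}\,\bigm|\,\|X_0\|>u_n \;\xrightarrow{w}\;(\Theta_h)_{|h|\le m+|i|}.
\]
To upgrade this to convergence of $E\,h_A^{(m)}$, I would check that $h_A^{(m)}$ is continuous at $P^\Theta$-almost every realization: discontinuities of $\ind{A}$ occur on $\partial A$, which is $P^\Theta$-negligible for the relevant ratios by Lemma \ref{lemma:umformulierung.e3} combined with (C$\Theta$); the products $\|w_h\|^\alpha\log\|w_h\|$ are continuous at $\|w_h\|=0$ under the convention $0\log 0:=0$; and after the angular normalization the denominator satisfies $\sum_{|k|\le m}\|v_k\|^\alpha\ge\|v_0\|^\alpha=1$, so no further singularities arise.

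Because $h_A^{(m)}$ is unbounded in its $\log$ factors, pure continuous mapping does not suffice. I would therefore truncate: define $h_{A,K}^{(m)}$ by replacing each $\log\|v_h\|$ with $(\log\|v_h\|\vee(-K))\wedge K$, obtaining a bounded and a.s.\ continuous function. Portmanteau then delivers the expectation convergence at each fixed $K$, and it remains to bound the truncation error uniformly in $n$. Using $\sum_{|k|\le m}\|v_k\|^\alpha\ge 1$ one obtains $|h_A^{(m)}(v)-h_{A,K}^{(m)}(v)|\le 2\sum_{|h|\le m}\|v_h\|^\alpha\,|\log\|v_h\||\,\Ind{|\log\|v_h\||>K}$, so the crux is a uniform-in-$n$ estimate for $E\bigl[\sum_{|h|\le m}\|X_h/\|X_0\|\|^\alpha |\log\|X_h/\|X_0\|\||\,\Ind{|\log\|X_h/\|X_0\|\||>K}\,\bigm|\,\|X_0\|>u_n\bigr]$. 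In the limit this reduces to the finiteness of $E[\|\Theta_h\|^\alpha|\log\|\Theta_h\||]$, which follows from the time-change formula \eqref{eq:timechangeform} applied to $f(\theta)=|\log\|\theta_0\||\,\|\theta_0\|^\alpha\,\ind{\|\theta_0\|>0}$, giving an identity in terms of $E[|\log\|\Theta_{-h}\||\,\Ind{\|\Theta_{-h}\|>0}]$; the pre-limit conditional expectations are controlled through regular variation of $\|X_0\|$ and Potter bounds, in the spirit of \eqref{eq:karamata}. This uniform tail estimate — and with it the finiteness assertion $d_A^{(m)}\in\R$ — is the most delicate step of the argument; the rest of the proof is either direct rescaling or standard continuous mapping.
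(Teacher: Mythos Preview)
Your scale-invariance observation is correct and elegant: $f_A^{(m)}(w)=\Ind{\|w_0\|>1}\,h_A^{(m)}(w/\|w_0\|)$, so the problem reduces to convergence of expectations of $h_A^{(m)}$ under the \emph{spectral} limit $(X_h/\|X_0\|)_{|h|\le m+|i|}\mid\|X_0\|>u_n\Rightarrow(\Theta_h)_{|h|\le m+|i|}$. This is a genuinely different route from the paper, which stays with $W_{n,0}=(X_h/u_n)$ and the tail process $Y$, and establishes uniform integrability directly via a $(1+\eta)$-th moment bound on $f_A^{(m)}(W_{n,0})$.

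However, your truncation-error estimate has a gap in the $\log^+$ regime. Bounding the denominator only by $\sum_{|k|\le m}\|v_k\|^\alpha\ge 1$ leaves the factor $\|v_h\|^\alpha$ in place, so you are asking for a uniform-in-$n$ bound on
\[
E\Big[(\|X_h\|/\|X_0\|)^\alpha\,\log^+(\|X_h\|/\|X_0\|)\,\Ind{\|X_h\|/\|X_0\|>e^K}\,\Big|\,\|X_0\|>u_n\Big].
\]
Your appeal to the time-change formula correctly transforms the limiting quantity $E[\|\Theta_h\|^\alpha\log^+\|\Theta_h\|]$ into $E[\log^-\|\Theta_{-h}\|\,\Ind{\Theta_{-h}\ne 0}]$, but this last expectation is \emph{not} guaranteed to be finite under (RV), (S), (C$\Theta$) alone, and Potter/Karamata bounds on $\|X_0\|$ do not control it either (the joint behaviour of $(X_0,X_h)$ is at stake, and the $\alpha$-th power is exactly borderline).

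The fix is the same trick the paper uses: for the $\log^+$ part, bound the denominator by $\sum_{|k|\le m}\|v_k\|^\alpha\ge\|v_h\|^\alpha$, which cancels the $\alpha$-th power and leaves only $\log^+\|v_h\|\,\Ind{\|v_h\|>e^K}$. Then, since $\|X_0\|>u_n$ implies $\log^+(\|X_h\|/\|X_0\|)\le\log^+(\|X_h\|/u_n)$, a uniform $(1+\eta)$-moment bound on $\log^+(\|X_h\|/u_n)$ given $\|X_0\|>u_n$ suffices; this follows from regular variation because $(\log^+x)^{1+\eta}=o(x^{\alpha-\varepsilon})$. The $\log^-$ part is unproblematic, as you note, since $x^\alpha\log^- x$ is bounded. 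With this amendment both the uniform truncation error and the finiteness $d_A^{(m)}\in\R$ go through. In effect, both approaches hinge on the same denominator inequality; the paper just applies it earlier, inside a direct uniform-integrability argument, whereas your route adds the (dispensable but clarifying) passage to the spectral process.
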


\begin{proof}
   Since $P\{Y_{h+i}/\|Y_{h}\|\in \partial A,\|Y_h\|>0\}=0$ for all $h\in\mathbb{Z}$ and all $A\in\mathcal{A}$ by Lemma \ref{lemma:umformulierung.e3}, the function $f_A^{(m)}$ is $P^{Y}$-a.s. continuous as finite sum of continuous functions. Because
   $\mathcal{L}((X_h/u_n)_{|h|\leq m}\mid \|X_0\|>u_n)\xrightarrow{w}\mathcal{L} ((Y_h)_{|h|\leq m})$ by the definition of the tail process,
the assertion
	$
 v_n^{-1}E[f_A^{(m)}(W_{n,0})] = E\big(f_A^{(m)}(W_{n,0})\mid \|X_0\|>u_n) \to
	d_A\in\R$ follows, provided  the random variables $v_n^{-1}f_A^{(m)}(W_{n,0}), n\in\N$, are uniformly integrable.
	 This, in turn, is implied by the uniform moment bound
	\begin{align}
	\sup_{n\in\mathbb{N}}E\big(|f_A^{(m)}(W_{n,0})|^{1+\eta} \mid\|X_0\|>u_n\big)<\infty
	\end{align}
	for some $\eta >0$. Due to the Minkowski inequality, we can verify this bound separately for both summands in the definition of $f_A^{(m)}$, whose moduli are maximal for $A=\R^d$. Since $s_n>m$ for $n$ large enough, it thus suffices to show that
	\begin{align}
E\bigg(\bigg|\frac{\sum_{h=-m}^{m} \log(\|X_{n,h}\|) \|X_{n,h}\|^{\alpha}}{\sum_{k=-m}^{m} \|X_{n,k}\|^{\alpha }}\bigg|^{1+\eta}\,\Big|\, \|X_0\|>u_n\bigg) \label{eq:unknown.alpha.1.unif.int.1}
	\end{align}
is bounded.
	Using Jensen's inequality and the fact that the denominator is at least 1 if $\|X_0\|>u_n$, this expectation  can be bounded by
	\begin{align}
	& E\bigg(  \frac{\sum_{h=-m}^{m} |\log\|X_{n,h}\||^{1+\eta} \|X_{n,h}\|^{\alpha}}{\sum_{k=-m}^{m} \|X_{n,k}\|^{\alpha }} \,\Big|\, \|X_0\|>u_n\bigg)\\
   & \le \sum_{h=-m}^{m} \Big[ E\big(  (\log^+\|X_{n,h}\|)^{1+\eta} \mid \|X_0\|>u_n\big)
   + E\big((\log^-\|X_{n,h}\|)^{1+\eta} \|X_{n,h}\|^{\alpha} \mid \|X_0\|>u_n\big)\Big].
  \end{align}
The first expectation is bounded because the time series is regularly varying and $(\log^+ x)^{1+\eta}=\ord(x^{\alpha-\eps})$ for some $\eps>0$ (cf.\ \cite{KS20}, Section 2.3.3), while the second is bounded, because $(\log^- x)^{1+\eta} x^\alpha$ is a bounded function. Since $m$ is fixed, \eqref{eq:unknown.alpha.1.unif.int.1} is bounded, which concludes the proof.
\end{proof}

\begin{lemma}
	\label{lemma:unknown.alpha.cna.convergence}
	If the conditions (RV), (S), (C$\Theta$)
are fulfilled, then
	\begin{align}
	d_n^{(m)}(A):=\frac{1}{nv_n}\sum_{t=1}^{n}f_A^{(m)}(W_{n,t}) \rightarrow d_A^{(m)}
	\end{align}
	in probability for all $A\in\mathcal{A}$ and all $m\in\mathbb{N}$.
\end{lemma}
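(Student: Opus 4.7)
The plan is to derive the convergence from Chebyshev's inequality. By stationarity, $E[d_n^{(m)}(A)] = v_n^{-1} E[f_A^{(m)}(W_{n,0})]$, which converges to $d_A^{(m)}$ by Lemma~\ref{lemma:unknown.alpha.ewertkonvergenz}; it therefore suffices to show $Var(d_n^{(m)}(A)) \to 0$.

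The critical structural feature is that for fixed $m$, the cluster functional $f_A^{(m)}(W_{n,t})$ depends on $W_{n,t}$ only through the \emph{fixed}-length block $(X_{t-(m+|i|)},\ldots,X_{t+(m+|i|)})$ of size $M_0 := 2(m+|i|)+1$ independent of $n$. The naive Cauchy--Schwarz covariance bound $|Cov(f_A^{(m)}(W_{n,0}),f_A^{(m)}(W_{n,j}))| = O(v_n)$, which follows from the $L^2$ conditional moment bound, would yield a normalised variance of order $1/v_n$ -- far too weak -- so mixing must be exploited at large lags. Since $f_A^{(m)}$ is unbounded through its logarithmic factors, I would use truncation rather than a Davydov-type inequality for unbounded functions, which yields unfavourable exponents in $v_n$.

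Set $f^K := f_A^{(m)} \mathds{1}_{|f_A^{(m)}| \leq K}$ and split the centred sum into a bounded part $S_n^K$ and a tail part $S_n^{K,c}$. The proof of Lemma~\ref{lemma:unknown.alpha.ewertkonvergenz} actually gives $E[|f_A^{(m)}(W_{n,0})|^{1+\eta} \mid \|X_0\|>u_n] \leq C$ for \emph{every} $\eta>0$, since $(\log^\pm x)^{1+\eta}$ is dominated by $x^\alpha$ near both $0$ and $\infty$. Taking, say, $\eta=2$ and using $|f-f^K| \leq |f|^{1+\eta}/K^\eta$ yields $E|S_n^{K,c}/(nv_n)| \leq 2CK^{-\eta}$ uniformly in $n$, vanishing as $K \to \infty$.

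For $S_n^K$ I would split the covariance sum by lag. For $|j| \leq l_n$, the bound $|Cov(f^K(W_{n,0}),f^K(W_{n,j}))| \leq 2K^2 v_n$ (from Cauchy--Schwarz and the indicator structure $f^K \ne 0 \Rightarrow \|X_0\|>u_n$) contributes $O(K^2 l_n/(nv_n))$ to the normalised variance, which tends to $0$ since $l_n = \ord(\sqrt{nv_n})$ by (S). For $|j| > l_n$, Davydov's inequality for $\beta$-mixing bounded functions gives $|Cov| \leq 4K^2 \beta_{|j|-M_0}$; since $M_0$ is fixed while $s_n \to \infty$, the remark following (S) yields $\beta_{|j|-M_0} \leq \beta_{l_n-s_n} = \ord(r_n/n)$ for $n$ large and every $|j| > l_n$, so the total contribution is $O(K^2 r_n/(nv_n^2)) = \ord((nv_n)^{-3/2}) \to 0$. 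Hence $Var(S_n^K/(nv_n)) \to 0$ for each fixed $K$; a standard $K \to \infty$ step via Chebyshev then completes the argument. The main obstacle is precisely this interplay between unboundedness and mixing: truncation is essential to unlock a usable $\beta$-mixing estimate at distant lags, and the fixed block-size property of $f_A^{(m)}$ (in contrast to the $s_n$-growing functionals $g_A$) is what makes the estimate $\beta_{l_n-M_0} = \ord(r_n/n)$ available via the remark after (S).
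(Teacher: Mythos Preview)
Your covariance--splitting argument breaks down at the long-lag step. You claim that the contribution of lags $|j|>l_n$ to the normalised variance is $O\bigl(K^2 r_n/(nv_n^2)\bigr)=\ord\bigl((nv_n)^{-3/2}\bigr)$, but this equality is false: dividing the two quantities gives
\[
\frac{r_n/(nv_n^2)}{(nv_n)^{-3/2}} \;=\; r_n\sqrt{n/v_n}\;\longrightarrow\;\infty,
\]
so the claimed order relation goes the wrong way. More importantly, $r_n/(nv_n^2)$ need not even tend to~0 under~(S): with $v_n=n^{-2/3}$ and $r_n=n^{1/7}$ (which satisfy $r_nv_n\to 0$, $r_n=\ord(\sqrt{nv_n})$, $nv_n\to\infty$) one gets $r_n/(nv_n^2)=n^{10/21}\to\infty$. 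The root cause is that condition~(S) gives only $\beta_{l_n}=\ord(r_n/n)$ for a \emph{single} sequence~$l_n$, not summability of the mixing coefficients; your uniform bound $\beta_{|j|-M_0}\le \beta_{l_n-s_n}$ followed by summation over $\sim n$ lags therefore produces the extra factor of~$n$ that the normalisation cannot absorb. Truncation at level~$K$ does not help here, since the problematic factor is independent of~$K$.

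The paper circumvents this by a Bernstein-type blocking argument rather than a direct covariance decomposition. It groups the sum into $O(n/r_n)$ blocks of length~$r_n$ (split into two interlaced families so that consecutive blocks in each family are separated by $r_n-2s_n-1>l_n$ observations), uses Eberlein's coupling inequality to replace them by independent copies at total-variation cost $O\bigl((n/r_n)\beta_{l_n}\bigr)=\ord(1)$, and then applies Chebyshev to the independent blocks. The variance of a single block sum is shown to be $\ord(nr_nv_n^2)$ (equivalently $E[(F_{n,A}^{(m)}(1,1))^2]=\ord(n/r_n)$) by a crude second-moment estimate that uses only stationarity, regular variation and $r_n=\ord(\sqrt{nv_n})$; the resulting normalised variance is then $(r_n/n)\cdot\ord(n/r_n)=\ord(1)$. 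The point is that blocking converts the mixing hypothesis into a single coupling step over $n/r_n$ blocks, which is exactly what $\beta_{l_n}=\ord(r_n/n)$ is tailored for, whereas summing Davydov-type bounds over all lags requires strictly more information about the decay of~$\beta_k$ than~(S) provides.
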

\begin{proof}
   Rewrite $d_n^{(m)}(A)$ as
	\begin{align}
	\label{eq:unknow.alpha.1.decompose}
	d_n^{(m)}(A)
	=\frac{r_n}{n}\sum_{l=1}^{2} \sum_{j=1}^{m_{n,l} } F_{A,n}^{(m)}(l,j) + \frac{1}{nv_n}\sum_{t=m_nr_n+1}^{n}f_A^{(m)}(W_{n,t}) ,
	\end{align}
	with $m_n:=\floor{n/r_n}$, $m_{n,l}:=\max\{j\in\mathbb{N}_0\mid 2(j-1)+l\le n/r_n\} \sim n/(2r_n)$ and
   $$ F_{n,A}^{(m)}(l,j) := \frac 1{r_nv_n}\sum_{t=1}^{r_n} f_A^{(m)}(W_{n,(2(j-1)+l-1)r_n+t}). $$
    We first prove
    \begin{align}
	\label{eq:gesetz.grosse.zahlen.bed.unknown.alpha.qweies.moment}
	E[(F^{(m)}_{n,A}(1,1))^2]=\ord(n/r_n).
	\end{align}
Check that
\begin{align}
	&E\bigg[\bigg(\sum_{t=1}^{r_n}|f_A^{(m)}(W_{n,t})|\bigg)^2\bigg]\\
	&\leq 4 E\bigg[\bigg(\sum_{t=1}^{r_n}\mathds{1}_{\left\{\|X_t\|>u_n\right\}}\sum_{h=-m}^{m} \frac{|\log \|X_{n,t+h}\||\|X_{n,t+h}\|^{\alpha}}{\sum_{k=-m}^{m} \|X_{n,t+k}\|^{\alpha }}\bigg)^2\bigg]\\
	&\leq 8\Bigg( E\bigg[\bigg(\sum_{t=1}^{r_n}\mathds{1}_{\{\|X_t\|>u_n\}} \sum_{h=-m}^{m} \frac{|\log\|X_{n,t+h}\||\|X_{n,t+h}\|^{\alpha}}{\sum_{k=-m}^{m} \|X_{n,t+k}\|^{\alpha }}\mathds{1}_{\{\|X_{t+h}\|< u_n\}}\bigg)^2\bigg]\\
  & \hspace{1cm} + r_n^2 E\bigg[\bigg( \frac 1{r_n} \sum_{t=1}^{r_n}\mathds{1}_{\{\|X_t\|>u_n\}}\sum_{h=-m}^{m} \frac{\log(\|X_{n,t+h}\|)\|X_{n,t+h}\|^{\alpha}}{\sum_{k=-m}^{m} \|X_{n,t+k}\|^{\alpha }}\mathds{1}_{\{\|X_{t+h}\|\geq u_n\}}\bigg)^2\bigg]\Bigg)\\	
	&=:8(E_1+E_2).  \label{eq:expectFnarep}
	\end{align}
	According to \eqref{eq:bed.3.15.forTnga.verifikation}, $E_1$ is at most of the order $r_nv_n=\ord(nr_nv_n^2)$, since $\log^-(x)x^\alpha$ is bounded and the denominator is at least 1. 
	Using Jensen's inequality twice and stationarity, 
    we may conclude for the second term
	\begin{align}
	E_2 & \le r_n^2 E\bigg[\frac 1{r_n}  \sum_{t=1}^{r_n}
\bigg(\mathds{1}_{\{\|X_t\|>u_n\}} \sum_{h=-m}^{m} \frac{\log^+(\|X_{n,t+h}\|)\|X_{n,t+h}\|^{\alpha}}{\sum_{k=-m}^{m} \|X_{n,t+k}\|^{\alpha }}\bigg)^2\bigg]\\	
	&=r_n^2 E\bigg[\Ind{\|X_0\|>u_n}\bigg(\sum_{h=-m}^{m} \frac{\log^+(\|X_{n,h}\|)\|X_{n,h}\|^{\alpha}}{\sum_{k=-m}^{m} \|X_{n,k}\|^{\alpha }}\bigg)^2 \bigg]\\
	&\le r_n^2v_n E\bigg(\sum_{h=-m}^{m} \frac{ (\log^+(\|X_{n,h}\|))^{2}\|X_{n,h}\|^{\alpha}}{\sum_{k=-m}^{m} \|X_{n,k}\|^{\alpha }} \,\bigg|\, \|X_0\|>u_n\bigg)\\
	&\leq r_n^2v_n \sum_{h=-m}^{m}E\Big(\big( \log^+\|X_{n,h}\|\big)^2\,\big|\, \|X_0\|>u_n\Big)
	=\Ord(r_n^2v_n)=\ord(nr_nv_n^2),
	\end{align}
where the last two steps follow from regular variation as before (cf.\ \cite{KS20}, Section 2.3.3) and $r_n=\ord(\sqrt{nv_n})$.
	
  Combine this result with the bound on $E_1$ and \eqref{eq:expectFnarep}	to conclude
  \eqref{eq:gesetz.grosse.zahlen.bed.unknown.alpha.qweies.moment}.

  These calculations also show that the last summand in \eqref{eq:unknow.alpha.1.decompose} vanishes in $L_2$-norm and thus in probability:
  \begin{align}
	E&\bigg[\bigg(\frac{1}{nv_n}\sum_{t=m_nr_n+1}^{n}f_A^{(m)}(W_{n,t})\bigg)^2\bigg]
	\leq \frac{1}{(nv_n)^2}E\bigg[\bigg( \sum_{t=1}^{r_n}|f_A^{(m)}(W_{n,t})|\bigg)^2\bigg]    =\ord(r_n/n)=\ord(1).
  \end{align}
	
   Denote independent copies of $F_{n,A}^{(m)}(l,j)$ by $F_{n,A}^{(m)*}(l,j)$. Since, for fixed $l\in\{1,2\}$ and different $j\in\{1,\ldots, m_{n,l}\}$, the random variables  $F_{n,A}^{(m)}(l,j)$  are measurable functions of random variables $X_t$ which are separated in time by at least $r_n-2s_n-1$ observations, Eberlein's inequality (\cite{eberlein1984}) combined with condition (S) shows that the total variation distance between the joint distribution of the $F_{n,A}^{(m)}(l,j)$ and the joint distribution of the $F_{n,A}^{(m)*}(l,j)$ tends to 0:
	\begin{align}
	\big\|P^{(F^{(m)*}_{n,A}(l,j))_{1\le j\le m_{n,l}}} -P^{(F_{n,A}^{(m)}(l,j))_{1\le j\le m_{n,l}}} \big\|_{TV} \leq m_{n,l}\beta_{r_n-2s_n-1}=\Ord(n\beta_{l_n}/r_n) \to 0
	\end{align}
	for $l\in\{1,2\}$. Thus, in view of \eqref{eq:unknow.alpha.1.decompose}, $d_n^{(m)}(A)$ converges to $d_A^{(m)}$ in probability if
\begin{align}  \label{eq:FAnindepconv}
  \frac{r_n}{n}\sum_{j=1}^{m_{n,l} } F_{n,A}^{(m)*}(l,j) \to \frac{d_A^{(m)}}2
\end{align}
in probability for $l\in\{1,2\}$. This, however, follows by Chebyshev's inequality from \eqref{eq:gesetz.grosse.zahlen.bed.unknown.alpha.qweies.moment} and Lemma \ref{lemma:unknown.alpha.ewertkonvergenz}, which shows that
  $$ \frac{r_n}{n}\sum_{j=1}^{m_{n,l} } E\big[F_{n,A}^{(m)*}(l,j)] = \frac{r_n m_{n,l}}{n}\frac{E[f_A^{(m)}(W_{n,0})]}{v_n}\to \frac{d_A^{(m)}}2. \qedhere$$
\end{proof}

\begin{lemma}
	\label{lemma:unknown.alpha.cna.gleichmaessig}
	Suppose (RV), (S), (C$\Theta$), and (A)
 are satisfied. Then $\sup_{A\in\mathcal{A}}|d_A^{(m)}|<\infty$ and $\sup_{A\in\mathcal{A}}|d_n^{(m)}(A)-d_A^{(m)}|=\ord_P(1)$ for all $m\in\mathbb{N}$.
\end{lemma}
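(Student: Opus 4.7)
The plan is to handle the two claims separately, exploiting the fact that $m$ is fixed so that all sums $\sum_{|h|\le m}$ are finite and the function $f_A^{(m)}$ depends on $A$ only through the finitely many indicators $\ind{A}(w_{h+i}/\|w_h\|)$, $|h|\le m$. For uniform boundedness of $d_A^{(m)}$, I would dominate $|f_A^{(m)}|$ by a function $F^{(m)}$ that does not involve $A$: setting $S(w):=\sum_{|k|\le m}\|w_k\|^\alpha$ and $L(w):=\sum_{|k|\le m}\log\|w_k\|\,\|w_k\|^\alpha$, the representation
\begin{align}
f_A^{(m)}(w)=\ind{\|w_0\|>1}\sum_{|h|\le m}\frac{\|w_h\|^\alpha}{S(w)}\Big(\log\|w_h\|-\frac{L(w)}{S(w)}\Big)\ind{A}\Big(\frac{w_{h+i}}{\|w_h\|}\Big)
\end{align}
immediately yields $|f_A^{(m)}(w)|\le F^{(m)}(w):=\ind{\|w_0\|>1}\sum_{|h|\le m}S(w)^{-1}\|w_h\|^\alpha(|\log\|w_h\||+|L(w)|/S(w))$, and the moment computations of Lemma \ref{lemma:unknown.alpha.ewertkonvergenz} show $E[F^{(m)}(Y)]<\infty$. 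Hence $\sup_{A\in\AA}|d_A^{(m)}|\le E[F^{(m)}(Y)]<\infty$.

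For the uniform convergence I would mimic the bracketing construction used in the proof of Proposition \ref{prop:procconvt}. The key observation is that for $A\subseteq B$ in $\AA$,
\begin{align}
|f_A^{(m)}(w)-f_B^{(m)}(w)|\le F^{(m)}(w)\sum_{|h|\le m}\ind{B\setminus A}\Big(\frac{w_{h+i}}{\|w_h\|}\Big),
\end{align}
so the oscillation of $A\mapsto f_A^{(m)}$ is controlled by the same indicator gaps that drove the argument for $g_A$. Define, for each coordinate $1\le k\le q$,
\begin{align}
\tilde H_k(t):=\sum_{|h|\le m}E\bigg[F^{(m)}(Y)\ind{A_{t^{(k)}}}\Big(\frac{Y_{h+i}}{\|Y_h\|}\Big)\bigg],
\end{align}
which is non-decreasing in $t$ by (A) (ii) and right-continuous by the monotone convergence theorem together with (A) (v) (via Lemma \ref{lemma:umformulierung.e3}); by Part~1 it is finite. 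Choosing, exactly as in the proof of Proposition \ref{prop:procconvt}, a partition $\mathcal{T}^{(\delta)}$ of $[0,1]^q$ on which each $\tilde H_k$ has increment at most $\delta/q$, we obtain finitely many brackets $\AA_T^\delta=\{A_t\mid t\in T\}$ with $\underline A_T\subset A\subset \bar A_T$ for $A\in\AA_T^\delta$.

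It then remains to bound, for each $T$, the term $\sup_{A\in\AA_T^\delta}|d_n^{(m)}(A)-d_n^{(m)}(\underline A_T)|$ by $(nv_n)^{-1}\sum_{t=1}^n F^{(m)}(W_{n,t})\sum_{|h|\le m}\ind{\bar A_T\setminus\underline A_T}(W_{n,t,h+i}/\|W_{n,t,h}\|)$. Its expectation converges to $\sum_{k\in K_T}(\tilde H_k(\sup T_k-)-\tilde H_k(\min T_k))\le\delta$ by a suitable pre-asymptotic analogue of Lemma \ref{lemma:unknown.alpha.ewertkonvergenz} applied to the $A$-free dominating integrand, using Potter bounds to ensure the required uniform integrability. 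A second-moment estimate exactly analogous to \eqref{eq:gesetz.grosse.zahlen.bed.unknown.alpha.qweies.moment}, together with Chebyshev, shows that these random oscillations concentrate around their means with error $\ord_P(1)$. Combining this with pointwise convergence of $d_n^{(m)}(\underline A_T)$ to $d_{\underline A_T}^{(m)}$ from Lemma \ref{lemma:unknown.alpha.cna.convergence} over the finite set of brackets, and letting $\delta\downarrow 0$, yields the uniform convergence.

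The main obstacle is that the weights inside $f_A^{(m)}$ can change sign (because $\log\|w_h\|-L/S$ is not signed), which rules out a direct monotone-in-$A$ inclusion between $f_A^{(m)}$ and $f_{\underline A_T}^{(m)}$, $f_{\bar A_T}^{(m)}$. Replacing monotonicity with the dominating function $F^{(m)}$ and the non-negative measure-generating functions $\tilde H_k$ circumvents this, but requires enough integrability of $F^{(m)}$ (uniformly in $n$ after conditioning on $\|X_0\|>u_n$), which is the technical heart of the argument and relies on the moment bounds used in Lemma \ref{lemma:unknown.alpha.ewertkonvergenz}.
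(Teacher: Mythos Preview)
Your approach is correct in outline, but it takes a different and somewhat heavier route than the paper. You correctly identify the central obstacle: the weights $\log\|w_h\|-L(w)/S(w)$ have no sign, so $A\mapsto f_A^{(m)}$ is not monotone in inclusion and the standard Glivenko--Cantelli sandwich does not apply directly. You resolve this by introducing a dominating function $F^{(m)}$ and controlling oscillations via a product bound $|f_A^{(m)}-f_B^{(m)}|\le F^{(m)}\sum_{|h|\le m}\ind{B\setminus A}(\cdot)$; this forces you into an extra concentration step for the random oscillations $R_{n,T}$, for which you need to replicate the block/$\beta$-mixing variance argument of Lemma \ref{lemma:unknown.alpha.cna.convergence}. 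That works, but it is additional machinery.

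The paper's proof avoids all of this by the simple device of splitting $\log=\log^+-\log^-$, writing $f_A^{(m)}=f_A^{(I,+)}-f_A^{(I,-)}-f_A^{(II,+)}+f_A^{(II,-)}$ with each piece \emph{non-negative and monotone in $A$}. This restores exactly the structure needed for the classical bracketing Glivenko--Cantelli argument (van der Vaart--Wellner, Theorem 2.4.1): for each piece one has $d_n^{(\sharp,\pm)}(\underline A_T)\le d_n^{(\sharp,\pm)}(A)\le d_n^{(\sharp,\pm)}(\bar A_T)$, so pointwise convergence at the finitely many bracket endpoints $\underline A_T,\bar A_T\in\AA\cup\tilde\AA$ (already established in Lemma \ref{lemma:unknown.alpha.cna.convergence}) together with the bracket-size bound $E[f_{\bar A_T}^{(\sharp,\pm)}(Y)-f_{\underline A_T}^{(\sharp,\pm)}(Y)]<q\eps$ yields uniformity immediately. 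No dominating function and no separate concentration argument for oscillations are needed. Your approach buys nothing extra here; the positive/negative-part decomposition is the cleaner resolution of the sign issue you flagged.
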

\begin{proof}
Fix $m\in\mathbb{N}$ and let
	\begin{align}
	f_{A}^{(I,\pm)}(w) & = \Ind{\|w_0\|>1}   \sum_{|h|\leq m} \frac{\log^{\pm}(\|w_h\|)\|w_h\|^{\alpha}}{\sum_{|k|\leq m} \|w_k\|^{\alpha }} \ind{A} \Big(\frac{w_{h+i}}{\|w_h\|}\Big),\\
	 f_{A}^{(II,\pm)}(w) & = \Ind{\|w_0\|>1}  \sum_{|h|\leq m}\frac{\|w_h\|^{\alpha} \sum_{|k|\leq m}\log^{\pm}(\|w_k\|)\|w_k\|^{\alpha}}{(\sum_{|k|\leq m} \|w_k\|^{\alpha })^2}  \ind{A}\Big(\frac{w_{h+i}}{\|w_h\|}\Big).
	\end{align}
	(To improve readability, here we suppress the dependence on $m$ in the notation.)
	In the proof of Lemma \ref{lemma:unknown.alpha.ewertkonvergenz}, it has been shown that the families of random variables $v_n^{-1}f_{A}^{(\sharp,\pm)}(W_{n,0})$, $n\in\mathbb{N}$, are uniformly integrable for $\sharp\in\{I,II\}$ (cf.\ \eqref{eq:unknown.alpha.1.unif.int.1}). Therefore, by the same arguments as used in the proof of Lemma \ref{lemma:unknown.alpha.cna.convergence}, it follows that
	\begin{align}
	d_n^{(\sharp,\pm)}(A) :=
  \frac{1}{nv_n}\sum_{t=1}^{n}f_A^{(\sharp,\pm)}(W_{n,t})\xrightarrow{P} 
  E\big[f_A^{(\sharp,\pm)}((Y_t)_{t\in\Z})\big]=:d_{A}^{(\sharp,\pm)}<\infty
	\end{align}
	for all $A\in\mathcal{A}$ and $\sharp\in\{I,II\}$. Indeed, this even holds for all $A\in\AA\cup\tilde\AA=\{A_t,A_t^-\mid t\in[0,1+\iota]^q\}$ as assumption (A) (iv) ensures the necessary continuity property for sets $A_t^-$. Since $f_{A}^{(m)}=f_{A}^{(I,+)}-f_{A}^{(I,-)}-f_{A}^{(II,+)}+f_{A}^{(II,-)}$ and $d_A^{(\sharp,\pm)}\le d_{\R^d}^{(\sharp,\pm)}<\infty$, it suffices to prove $\sup_{A\in\mathcal{A}}|d_{n}^{(\sharp,\pm)}(A)-d_{A}^{(\sharp,\pm)}|=\ord_P(1)$ for all $\sharp\in\{I,II\}$.
	
	Fix some $\eps>0$ and, for the time being, some $k\in\{1,\ldots,q\}$.
	By assumptions (A) (ii) and (v), $[0,1]\ni t\mapsto E\big[f_{A_{t^{(k)}}}^{(\sharp,\pm)}(Y)\big]$ (with $Y=(Y_h)_{h\in\Z}$) is non-decreasing and right-continuous. Thus, by the same arguments as used in the proof of Proposition \ref{prop:procconvt} for the function $H_k$, there exist $J_k\in\N$ and $0=:t_{k,0}<t_{k,1}<\ldots<t_{k,J_k}:=1$ such that
\begin{align}
   E\bigg[f_{\cup_{s< t_{k,j}}A_{s^{(k)}}}^{(\sharp,\pm)}(Y) -f_{A_{t_{k,j-1}^{(k)}}}^{(\sharp,\pm)}(Y)\bigg]<\eps
\end{align}
   for all $1\le j\le J_k$. Let $\mathcal{T}_k:=\{[t_{k,j-1},t_{k,j})\mid 1\le j\le J_k\}\cup\{\{1\}\}$.

   We now define brackets for $(f_{A}^{(\sharp,\pm)})_{A\in\mathcal{A}}$ by
	$$[{T_1,\ldots,T_q} ]:=\{f_{A_{(s_1,\ldots,s_l)}}^{(\sharp,\pm)} \mid s_k\in T_k,\;\forall\, 1\le k\le q\}$$
	for all $T_k\in\mathcal{T}_k$, $1\le k\le q$. Recall  definition \eqref{eq:setboundsdef} of $\bar A_T$ and $\underline{A}_T$. By construction, for all $s,t\in T:=\times_{k=1}^q T_k$,
   \begin{align}
     E  \big[\big|f_{A_s}^{(\sharp,\pm)}(Y) -f_{A_t}^{(\sharp,\pm)}(Y)\big|\big]
       & \le E \big[f_{\bar A_T}^{(\sharp,\pm)}(Y) -f_{\underline{A}_T}^{(\sharp,\pm)}(Y)\big]\\
        &\le \sum_{k=1}^q E \big[f_{\cup_{r\in T_k} A_{r^{(k)}}}^{(\sharp,\pm)}(Y)-f_{A_{(\min T_k)^{(k)}}}^{(\sharp,\pm)}(Y)\big] <q\eps.
   \end{align}
  This proves that, for arbitrary $\eps>0$, $(f_A^{(\sharp,\pm)})_{A\in\AA}$ is covered by finitely many $q\eps$-brackets w.r.t.\ the $L_1(P^{Y})$-norm.
	Since $d_{n}^{(\sharp,\pm)}(A)$ converges pointwise for all $A\in\mathcal{A}\cup\tilde\AA$, the  proof of Theorem 2.4.1 of \cite{vanderVaart1996} shows that
	$\sup_{A\in\mathcal{A}}|d_n^{(\sharp,\pm)}(A)- d_A^{(\sharp,\pm)}|=\ord_P(1)$ for all $\sharp\in\{I,II\}$.
\end{proof}

\begin{lemma}
	\label{lemma:unknown.alpha.approximating.error}
	\begin{itemize}
		\item[(i)] Suppose the Conditions (RV), (S), (BC), (BC'), (TC), and (M)~(ii), (iii) are satisfied. Then $$\lim_{m\to\infty}\limsup_{n\to\infty}\sup_{A\in\AA} E|d_n^{(m)}(A)-d_n(A)|=0.$$
		\item[(ii)] Under Conditions (RV), (S) and (M) (ii) $$\lim_{m\to\infty}\sup_{A\in\AA}|d^{(m)}_A-d_{A}|=0.$$
	\end{itemize}
\end{lemma}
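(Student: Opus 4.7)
For both parts, the plan is to construct an $A$-independent upper bound $G_m(w)$ on $\sup_{A\in\AA}|f_A(w)-f_A^{(m)}(w)|$ and then analyze $E[G_m(\cdot)]$ in each setting. With the shorthand $S(w):=\sum_{k\in\Z}\|w_k\|^\alpha$, $S_m(w):=\sum_{|k|\leq m}\|w_k\|^\alpha$, $P_m(w):=S_m(w)/S(w)$, $p_h(w):=\|w_h\|^\alpha/S(w)$, and $\bar{L}(w):=\sum_k p_k(w)\log\|w_k\|$, together with their truncated counterparts $p_h^{(m)}:=p_h/P_m$ and $\bar{L}^{(m)}$, the definition \eqref{eq:def.f.unknown.alpha} rewrites as
\begin{align}
f_A^{(m)}(w)=\Ind{\|w_0\|>1}\sum_{|h|\leq m}p_h^{(m)}(w)\big(\log\|w_h\|-\bar{L}^{(m)}(w)\big)\ind{A}\Big(\frac{w_{h+i}}{\|w_h\|}\Big),
\end{align}
and similarly for $f_A$ (sum over $h\in\Z$). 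Decomposing $f_A-f_A^{(m)}$ into a tail part $\sum_{|h|>m}$ and a denominator-correction part (from $p_h^{(m)}\ne p_h$ and $\bar{L}^{(m)}\ne\bar{L}$) and using $|\ind{A}|\leq 1$ yields an $A$-free bound
\begin{align}
G_m(w):=C\,\Ind{\|w_0\|>1}\Big[\sum_{|h|>m}p_h|\log\|w_h\||+(1-P_m)|\bar{L}|+\frac{1-P_m}{P_m}\sum_{|h|\leq m}p_h|\log\|w_h\||+\frac{|R_m|}{P_m}\Big],
\end{align}
with $R_m:=\sum_{|h|>m}p_h\log\|w_h\|$.

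For part (ii), condition (BC) together with Lemma \ref{lemma:AC} and the representation $Y=\|Y_0\|\Theta$ ensures $Y\in l_\alpha$ a.s., so $P_m(Y)\uparrow 1$ and both $R_m(Y)$ and $\sum_{|h|>m}p_h(Y)|\log\|Y_h\||$ vanish a.s., giving $G_m(Y)\to 0$ a.s. The moment assumption (M)(ii), in its $Y$-equivalent form from Remark \ref{rem:cond}(a), combined with monotone convergence in $m$ and H\"older's inequality, yields $E[\sum_h p_h(Y)|\log\|Y_h\||]<\infty$. On the event $\{\|Y_0\|>1\}$ (of full probability, since $\|Y_0\|$ is Pareto with threshold $1$) one has $1/P_m(Y)\leq S(Y)/\|Y_0\|^\alpha<\infty$ a.s., so $G_m(Y)$ is dominated by an integrable random variable not depending on $m$. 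Dominated convergence then gives $E[G_m(Y)]\to 0$, hence $\sup_{A\in\AA}|d_A^{(m)}-d_A|\leq E[G_m(Y)]\to 0$.

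For part (i), stationarity gives $\sup_{A\in\AA}E|d_n^{(m)}(A)-d_n(A)|\leq E(G_m(W_{n,0})\mid\|X_0\|>u_n)$. Splitting $|\log|=\log^++\log^-$, the $\log^-$-weighted tail $\sum_{m<|h|\leq s_n}p_h(W_{n,0})\log^-(\|X_h\|/u_n)$ is exactly what condition (M)(iii) controls; the unweighted tail $\sum_{m<|h|\leq s_n}p_h(W_{n,0})$ is handled by cutting at $cu_n$, with the $\{\|X_h\|\leq cu_n\}$ part covered by (TC) and the $\{\|X_h\|>cu_n\}$ part by the exceedance bound $P(\|X_h\|>cu_n\mid\|X_0\|>u_n)\leq C\,e_{n,c}(|h|)$ from (BC) combined with a Karamata-type estimate on $u_n^{-\alpha}E(\|X_h\|^\alpha\Ind{\|X_h\|>cu_n}\mid\|X_0\|>cu_n)$; and the $\log^+$ contribution, nonzero only when $\|X_h\|>u_n$, is bounded via $\log^+(x)\leq x^\eta/\eta$ for small $\eta>0$, reducing to a moment bound that is summable in $|h|$ through (BC') and (BC). The denominator-correction terms decompose into products of these tail quantities with $1/P_m(W_{n,0})$, which on $\{\|X_0\|>u_n\}$ is bounded by $S(W_{n,0})$; the resulting products remain manageable because $1/P_m$ always multiplies a vanishing factor.

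The main obstacle is precisely this bookkeeping: $1/P_m$ is not uniformly bounded over realizations $w\in l_\alpha$, so naive dominated convergence cannot be applied directly. The structural observation that rescues the argument is that $1/P_m$ appears only in products with a vanishing quantity ($1-P_m$, $R_m$, or a truncated tail sum), so that under the combined hypotheses (TC), (M)(ii)--(iii), (BC), and (BC'), each such product is integrable and vanishes in the appropriate limit, which yields both conclusions.
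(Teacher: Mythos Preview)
Your overall strategy---bounding $|f_A-f_A^{(m)}|$ uniformly in $A$ by an $A$-free quantity and then controlling its expectation---is the same as the paper's, but the execution has two genuine gaps.

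For part (ii), the claim that $G_m(Y)$ is dominated by a single integrable random variable does not follow from the ingredients you list. The problematic terms are those involving $1/P_m$: for instance, bounding $\frac{1-P_m}{P_m}\sum_{|h|\le m}p_h|\log\|Y_h\||$ via $1/P_m\le S(Y)$ leads to a dominant of order $\sum_h\|Y_h\|^\alpha|\log\|Y_h\||$, whose integrability is \emph{not} implied by (M)(ii), which only controls the \emph{normalized} quantity $\sum_h p_h|\log\|Y_h\||^{1+\delta}$. The paper avoids this by applying H\"older's inequality to decouple the two factors: the $(1{+}\delta)$-moment of $\sum_{|h|\le m}p_h^{(m)}|\log\|Y_h\||$ is uniformly bounded by (M)(ii), while $(1-P_m)^{(1+\delta)/\delta}\le 1$ and tends to $0$ a.s., so its expectation vanishes by dominated convergence. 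Your final paragraph recognizes the difficulty but does not supply this decoupling; asserting that ``$1/P_m$ multiplies a vanishing factor'' is not enough when the other factor is not controlled in $L^1$.

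For part (i), the proposed Karamata estimate on $u_n^{-\alpha}E(\|X_h\|^\alpha\Ind{\|X_h\|>cu_n}\mid\|X_0\|>cu_n)$ fails: since $\|X_h\|$ is regularly varying with index exactly $\alpha$, this conditional $\alpha$-th moment need not be finite (Karamata applies only for exponents $q<\alpha$). The paper instead uses the trivial bound $p_h\le 1$, reducing the large-value contribution to $P(\|X_h\|>cu_n\mid\|X_0\|>u_n)\le C\,e_{n,c}(|h|)$ via (BC) and regular variation of $\|X_0\|$. Likewise, the $\log^+$ contribution is handled in the paper directly through (BC'), since $\log^+\|X_{n,h}\|$ is precisely the quantity bounded there; the detour via $\log^+x\le x^\eta/\eta$ does not connect cleanly to the available hypotheses. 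The denominator-correction terms in part (i) suffer from the same missing H\"older step as in part (ii).
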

\begin{proof}
	By stationarity one has for sufficiently large $n$ (such that $\{-m,\ldots,m\}\subset H_n$)
	\begin{align}
	&E(|d_n^{(m)}(A)-d_n(A)|)\leq E(|f_A^{(m)}(W_{n,0})-f_A(W_{n,0})|\mid \|X_0\|>u_n)\\
	&\leq  E\Big( \Big| \frac{\sum_{|h|\leq m} \log(\|X_{n,h}\|)\|X_{n,h}\|^{\alpha} \ind{A} \big(\frac{X_{h+i}}{\|X_{h}\|}\big)}{\sum_{|k|\leq m} \|X_{n,k}\|^{\alpha }} \\
	&\hspace{0,5cm}- \frac{\sum_{|h|\leq s_n}\log(\|X_{n,h}\|)\|X_{n,h}\|^{\alpha}\Big( \Ind{h\in H_n} \ind{A}\big(\frac{X_{h+i}}{\|X_{h}\|}\big)
	+ \Ind{h\in H_n^c} \ind{A}(0) \Big)}{\sum_{|k|\leq s_n} \|X_{n,k}\|^{\alpha }}\Big| \, \Big|\, \|X_0\|>u_n\Big)\\
	&\hspace{0.2cm}+ E\Bigg(  \Big| \frac{\sum_{|h|\leq m}\|X_{n,h}\|^\alpha \ind{A} \big(\frac{X_{h+i}}{\|X_{h}\|}\big)\sum_{|k|\leq m}\log(\|X_{n,k}\|)\|X_{n,k}\|^\alpha }{(\sum_{|k|\leq m} \|X_{n,k}\|^{\alpha })^2} \\
	&\hspace{0.2cm}-  \frac{\sum_{|h|\leq s_n}\|X_{n,h}\|^\alpha\Big( \Ind{h\in H_n} \ind{A}\big(\frac{X_{h+i}}{\|X_{h}\|}\big)
	+ \Ind{h\in H_n^c} \ind{A}(0) \Big) \sum_{|k|\leq s_n}\log(\|X_{n,k}\|)\|X_{n,k}\|^\alpha }{(\sum_{|k|\leq s_n} \|X_{n,k}\|^{\alpha })^2}\, \Big| \, \\
	&\hspace{2cm}   \Big|\, \|X_0\|>u_n\Bigg)\\
	&=: T_1+T_2.
	\end{align}
	In the following calculations we will use identities of the type
	\begin{align}
		\label{eq:sum-difference}
		&\sum_{|h|\leq s_n}a_h\sum_{|k|\leq m}b_k-\sum_{|h|\leq m}a_h\sum_{|k|\leq s_n}b_k
		= \sum_{m<|h|\leq s_n}a_h\sum_{|k|\leq m}b_k-\sum_{|h|\leq m}a_h\sum_{m<|k|\leq s_n}b_k
	\end{align}
	for $a_h,b_k\in\mathbb{R}$. Reducing the fractions to a common denominator, using \eqref{eq:sum-difference} and the triangle inequality and bounding all indicators by $1$, we obtain
	\begin{align}
		T_1&\leq E\bigg( \frac{\sum_{|h|\leq m} |\log\|X_{n,h}\||\|X_{n,h}\|^{\alpha} \sum_{m<|k|\leq s_n} \|X_{n,k}\|^{\alpha }}{\sum_{|k|\leq m} \|X_{n,k}\|^{\alpha }\sum_{|k|\leq s_n} \|X_{n,k}\|^{\alpha }}\,\Big|\, \|X_0\|>u_n\bigg) \\
		&\hspace{0,5cm}+E\bigg(\frac{\sum_{m<|h|\leq s_n} |\log\|X_{n,h}\||\|X_{n,h}\|^{\alpha} }{\sum_{|k|\leq s_n} \|X_{n,k}\|^{\alpha }}   \,\Big|\, \|X_0\|>u_n\bigg)\\
		&=:T_{1,1}+T_{1,2}.
	\end{align}
	Applying the H\"{o}lder inequality for expectations and for sums, for $\delta>0$, we can bound $T_{1,1}$ by
	\begin{align}
		T_{1,1}\leq& \Bigg( E\bigg( \frac{\sum_{|h|\leq m} |\log\|X_{n,h}\||^{1+\delta}\|X_{n,h}\|^{\alpha} }{\sum_{|k|\leq m} \|X_{n,k}\|^{\alpha }}\,\Big|\, \|X_0\|>u_n\bigg)\Bigg)^{1/(1+\delta)}\\
		&\hspace{3cm}\times \Bigg(E\bigg( \bigg(\frac{ \sum_{m<|k|\leq s_n} \|X_{n,k}\|^{\alpha }}{\sum_{|k|\leq s_n} \|X_{n,k}\|^{\alpha }}\bigg)^{(1+\delta)/\delta}\,\Big|\, \|X_0\|>u_n\bigg)\Bigg)^{\delta/(1+\delta)}.
	\end{align}
	Because of uniform integrability (cf. \eqref{eq:unknown.alpha.1.unif.int.1}), the first expectation converges to the expectation that is bounded in \eqref{eq:condMiiivar}. The second expectation is bounded by
	\begin{align}
		&E\left( \frac{ \sum_{m<|k|\leq s_n} \|X_{n,k}\|^{\alpha }}{\sum_{|k|\leq s_n} \|X_{n,k}\|^{\alpha }}\,\Big|\, \|X_0\|>u_n\right)\\
		&\leq E\left( \frac{ \sum_{m<|k|\leq s_n} \|X_{n,k}\|^{\alpha }\Ind{\|X_k\|\leq cu_n}}{\sum_{|k|\leq s_n} \|X_{n,k}\|^{\alpha }}\,\Big|\, \|X_0\|>u_n\right) \\
		&\hspace{0,5cm}+ \sum_{m<|k|\leq s_n} P(\|X_{k}\|>cu_n\mid \|X_0\|>cu_n) \frac{P(\|X_0\|>cu_n)}{P(\|X_0\|>u_n)} \\
		&\leq E\left( \frac{ \sum_{m<|k|\leq s_n} \|X_{n,k}\|^{\alpha }\Ind{\|X_k\|\leq cu_n}}{\sum_{|k|\leq s_n} \|X_{n,k}\|^{\alpha }}\,\Big|\, \|X_0\|>u_n\right) + 4c^{-\alpha}\sum_{h=m}^{s_n}e_{n,c}(h),
	\end{align}
	for sufficiently large $n$ and the constant $c\in (0,1)$ from Condition (TC), and hence $\lim_{m\to\infty}\limsup_{n\to\infty}T_{1,1}=0$ by condition (M) (ii), (BC) and (TC). Condition (BC') implies
	\begin{align}
		&E\bigg(\frac{\sum_{m<|h|\leq s_n} \log^{+}(\|X_{n,h}\|)\|X_{n,h}\|^{\alpha} }{\sum_{|k|\leq s_n} \|X_{n,k}\|^{\alpha }}   \,\Big|\, \|X_0\|>u_n\bigg)\\
		&\leq \sum_{m<|h|\leq s_n} E\left( \log^{+}\|X_{n,h}\|   \mid \|X_0\|>u_n\right)
		\leq 2\sum_{m<h\leq s_n} e_n'(h),
	\end{align}
 	and thus $\lim_{m\to\infty}\limsup_{n\to\infty}T_{1,2}=0$ by condition (BC') and (M) (iii).

 	Similarly, one can show that
 	\begin{align}
 	&T_2\leq 2E\bigg( \frac{ \sum_{|k|\leq m}|\log\|X_{n,k}\||\|X_{n,k}\|^\alpha   \sum_{m<|h|\leq s_n} \|X_{n,h}\|^\alpha} {\sum_{|k|\leq m} \|X_{n,k}\|^{\alpha } \sum_{|k|\leq s_n} \|X_{n,k}\|^{\alpha }}      \,\Big|\, \|X_0\|>u_n\bigg)\\
 	&\hspace{1cm}+E\bigg( \frac{ \sum_{m<|k|\leq s_n}|\log\|X_{n,k}\||\|X_{n,k}\|^\alpha }{\sum_{|k|\leq s_n} \|X_{n,k}\|^{\alpha }}   \,\Big|\, \|X_0\|>u_n\bigg)\\
 	&\hspace{1cm}+E\bigg( \frac{\sum_{m<|h|\leq s_n}\|X_{n,h}\|^\alpha \sum_{|k|\leq m}|\log\|X_{n,k}\||\|X_{n,k}\|^\alpha }{(\sum_{|k|\leq s_n} \|X_{n,k}\|^{\alpha })^2}   \,\Big|\, \|X_0\|>u_n\bigg)\\
 	&\le 3T_{1,1}+T_{1,2},
 	\end{align}
 	so that $\lim_{m\to\infty}\limsup_{n\to\infty}T_2=0$. Since the bounds on $T_1$ and $T_2$ do not depend on $A$, all convergences hold uniformly in $A$, which proves  assertion (i).	
	Assertion (ii) follows by similar arguments, which are given in detail in the Supplement.
\end{proof}

The next two lemmas establish the asymptotic negligibility of $II(A)$. The proof of the first lemma is given in the Supplement.
\begin{lemma}
	\label{lemma:maximizing.m}
	For $m\in\N$ and $a=(a_1,\ldots,a_m)\in [0,1]^m$ let
	\begin{align}
		M(a)& :=\frac{\sum_{k=1}^{m}a_k \log^2a_k}{1+\sum_{k=1}^{m}a_k},\qquad
		\tilde M(a)  := \frac{\sum_{k=1}^{m}a_k|\log a_k|}{1+\sum_{k=1}^{m}a_k}
	\end{align}
	with $0\log^i(0):=0$ for $i\in\{1,2\}$.
	Then $\sup_{a\in[0,1]^m} M(a)=\Ord(\log^2 m)$ and $\sup_{a\in[0,1]^m} \tilde M(a)$ $=\Ord(\log m)$
	as $m\to\infty$.
\end{lemma}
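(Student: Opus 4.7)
The plan is to split the indices $\{1,\ldots,m\}$ according to whether $a_k\ge 1/m$ or $a_k<1/m$, exploiting two elementary facts: on the upper range $[1/m,1]$ one has the trivial bound $|\log a_k|\le \log m$, while on the lower range the functions $x\mapsto x\log^2 x$ and $x\mapsto -x\log x$ are monotone increasing on $(0,e^{-2}]$ and $(0,e^{-1}]$ respectively, and both of these intervals contain $(0,1/m]$ once $m$ is sufficiently large.

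To bound $M(a)$, for indices with $a_k\ge 1/m$ I would use $a_k\log^2 a_k\le a_k\log^2 m$, while for indices with $a_k<1/m$ the monotonicity of $x\mapsto x\log^2 x$ on $(0,e^{-2}]$ gives $a_k\log^2 a_k\le (1/m)\log^2 m$. Summing the two contributions over all $k$ yields
\begin{align}
\sum_{k=1}^m a_k\log^2 a_k \;\le\; \log^2 m\sum_{k=1}^m a_k + m\cdot\frac{\log^2 m}{m} \;=\; \log^2 m\Big(1+\sum_{k=1}^m a_k\Big),
\end{align}
so dividing by $1+\sum_{k=1}^m a_k$ produces the uniform bound $M(a)\le \log^2 m$. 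The second assertion is entirely analogous: bound $a_k|\log a_k|\le a_k\log m$ on the upper range, and on $(0,1/m]$ apply the monotonicity of $-x\log x$ on $(0,e^{-1}]$ to get $a_k|\log a_k|\le (\log m)/m$; after summation this yields $\tilde M(a)\le \log m$ uniformly in $a$.

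The only minor technical caveat concerns small $m$ for which $1/m\ge e^{-2}$ (or $e^{-1}$), but this excludes only finitely many values and in those cases the trivial pointwise bound $x\log^2 x\le 4/e^2$ on $[0,1]$ shows that $M(a)$ and $\tilde M(a)$ are $\Ord(1)$, which is consistent with the asymptotic claim. I do not anticipate any genuine obstacle here; the key insight is that by splitting at the threshold $1/m$, the denominator $1+\sum_k a_k$ absorbs all the mass above the threshold, while the mass below contributes only an additive constant times $\log^2 m$ (respectively $\log m$), both of which fit into the required bound.
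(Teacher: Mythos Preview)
Your argument is correct. The threshold split at $1/m$ is exactly the right idea: on $[1/m,1]$ the bound $|\log a_k|\le\log m$ feeds the term $a_k\log^2 a_k$ (resp.\ $a_k|\log a_k|$) into $a_k\log^2 m$ (resp.\ $a_k\log m$), which is absorbed by the denominator $1+\sum_k a_k$; on $(0,1/m]$ the monotonicity of $x\mapsto x\log^2 x$ on $(0,e^{-2}]$ and of $x\mapsto -x\log x$ on $(0,e^{-1}]$ caps each term at $(\log^2 m)/m$ (resp.\ $(\log m)/m$), and summing at most $m$ such terms produces the additive $\log^2 m$ (resp.\ $\log m$) that matches the ``$1$'' in the denominator. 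The handling of small $m$ via the global bound $x\log^2 x\le 4e^{-2}$ on $[0,1]$ is also fine, since only finitely many values are affected and the claim is asymptotic.

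The paper defers its own proof of this lemma to the Supplement, which is not part of the source provided here, so a line-by-line comparison is not possible. That said, your approach is entirely self-contained and elementary, and yields the sharp constants $M(a)\le\log^2 m$ and $\tilde M(a)\le\log m$ (for $m\ge 8$ and $m\ge 3$ respectively) rather than merely the $\Ord(\cdot)$ statements. There is nothing to add.
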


\begin{lemma}
	\label{lemma:unknow.alpha.part.ii}
	If the conditions (RV), (S), (BC), (TC), (C$\Theta$), (BC'), (M) (i), (H) and $\log^4 n=\hbox{o}(nv_n)$ are fulfilled, then the term $II(A)$ defined in \eqref{eq:unknown.alpha.zweite.zerlegung} is uniformly negligible:
	\begin{align}
	\sup_{A\in\mathcal{A}}|II(A)|=\ord_P\big((nv_n)^{-1/2}\big).
	\end{align}
\end{lemma}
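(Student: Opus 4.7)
The plan is to exploit that $II(A)$ is quadratic in $\hat\alpha_n-\alpha$ and to show that the accompanying factor is only polylogarithmically large in $n$. Since the earlier part of the proof of Theorem \ref{th:procconvunknownalpha} already gives $\sqrt{nv_n}(\hat\alpha_n-\alpha)=O_P(1)$, one has $(\hat\alpha_n-\alpha)^2=O_P((nv_n)^{-1})$. Writing $II(A)=\tfrac12(\hat\alpha_n-\alpha)^2\,\tilde Q_n(A)$ with $\tilde Q_n(A)$ the bracketed second-derivative expression in \eqref{eq:unknown.alpha.zweite.zerlegung} divided by $\sum_{t=1}^n\Ind{\|X_t\|>u_n}$, it suffices to prove $\sup_{A\in\AA}|\tilde Q_n(A)|=O_P(\log^2 s_n)$. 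Since $s_n\le n$ and $\log^4 n=\ord(nv_n)$, this then yields the desired $\sup_A|II(A)|=O_P(\log^2 n/(nv_n))=\ord_P((nv_n)^{-1/2})$.

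The key is a pathwise second-derivative identity that is uniform in $A$. The summand at $t$ can be written as $\partial_a^2 F_t(\bar\alpha,A)$ with $F_t(a,A):=\sum_{h=-s_n}^{s_n}w_h(a)g_{h,A}$, where $w_h(a):=\|X_{n,t+h}\|^a/\sum_k\|X_{n,t+k}\|^a$ are probability weights and $g_{h,A}:=\Ind{h\in H_n}\ind A(X_{t+h+i}/\|X_{t+h}\|)+\Ind{h\in H_n^c}\ind A(0)\in[0,1]$. A direct calculation with $L_k:=\log\|X_{n,t+k}\|$ gives $\partial_a^2 F_t(a,A)=E_w[g_A(L-\bar L)^2]-E_w[g_A]\mathrm{Var}_w(L)$, where $E_w$, $\mathrm{Var}_w$ and $\bar L$ refer to the weights $w_h(a)$. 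Since $|g_{h,A}|\le 1$, this yields the $A$-free bound
\[
|\partial_a^2 F_t(\bar\alpha,A)|\le 2\,\mathrm{Var}_{w(\bar\alpha)}(L)\le \frac{2\sum_{|k|\le s_n}L_k^2\|X_{n,t+k}\|^{\bar\alpha}}{\sum_{|k|\le s_n}\|X_{n,t+k}\|^{\bar\alpha}}.
\]

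One then works on the event $E_n:=\{\bar\alpha\in[\alpha/2,2\alpha]\}$, which has probability tending to $1$ by the consistency of $\hat\alpha_n$. Since $\|X_t\|>u_n$ forces the denominator to be at least $\|X_{n,t}\|^{\bar\alpha}\ge 1$ on $E_n$, it is natural to split into ``small'' indices ($\|X_{n,t+k}\|\le 1$) and ``big'' indices ($\|X_{n,t+k}\|>1$). For small $k$, set $a_k:=\|X_{n,t+k}\|^{\bar\alpha}\in[0,1]$ and note $L_k^2\|X_{n,t+k}\|^{\bar\alpha}=\bar\alpha^{-2}a_k\log^2 a_k$; the bound $\sum_k\|X_{n,t+k}\|^{\bar\alpha}\ge 1+\sum_{k\text{ small}}a_k$ combined with Lemma \ref{lemma:maximizing.m} (applied with $m=2s_n+1$) gives a deterministic bound of the form $C\log^2 s_n/\bar\alpha^2$. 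For big $k$, using $\sum_k\|X_{n,t+k}\|^{\bar\alpha}\ge \sum_{k\text{ big}}\|X_{n,t+k}\|^{\bar\alpha}$, the fraction is a weighted average of $(\log^+\|X_{n,t+k}\|)^2$ and is thus dominated by $\sum_{|k|\le s_n}(\log^+\|X_{n,t+k}\|)^2\Ind{\|X_{t+k}\|>u_n}$.

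It remains to control the $t$-average of the big-index piece, which by stationarity has expectation $\sum_{|k|\le s_n}v_n^{-1}E[(\log^+\|X_{n,k}\|)^2\Ind{\|X_k\|>u_n,\|X_0\|>u_n}]$. A crude bound using $(\log^+ x)^2\le C+x^\delta$ for some $\delta<\alpha$, Karamata's theorem (cf.\ \eqref{eq:karamata}), and the clustering bound $P(\|X_0\|>u_n\mid\|X_k\|>u_n)\le e_{n,1}(|k|)$ reduces this sum to $\Ord(1)\sum_{|k|\le s_n}e_{n,1}(|k|)=\Ord(1)$ by (BC). Combined with Markov's inequality and $\sum_t\Ind{\|X_t\|>u_n}/(nv_n)\to 1$ in probability, the big-index average is $O_P(1)$, and thus $\sup_A|\tilde Q_n(A)|=O_P(\log^2 s_n)$ on $E_n$. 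The main obstacle is twofold: extracting a merely polylogarithmic bound for the small-index contribution in spite of the random exponent $\bar\alpha$ (handled by Lemma \ref{lemma:maximizing.m} after reducing to probability weights via $\|X_{n,t}\|^{\bar\alpha}\ge 1$), and preventing the up to $2s_n+1$ big indices from producing an estimate of order $s_n$, which is averted by noting that $\log^+\|X_{n,t+k}\|$ vanishes off the rare event $\{\|X_{t+k}\|>u_n\}$ whose total frequency in a cluster is controlled by (BC).
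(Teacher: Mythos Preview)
Your overall strategy coincides with the paper's: reduce to $(\hat\alpha_n-\alpha)^2=O_P((nv_n)^{-1})$ times a remainder that is only polylogarithmic in $n$, then split the weighted $\log^2$-sum into small ($\|X_{t+h}\|\le u_n$) and big ($\|X_{t+h}\|>u_n$) indices, and handle the former via Lemma \ref{lemma:maximizing.m}. Your second-derivative identity $\partial_a^2 F_t=E_w[g_A(L-\bar L)^2]-E_w[g_A]\mathrm{Var}_w(L)$ and the resulting bound $|\partial_a^2 F_t|\le 2\,\mathrm{Var}_w(L)$ is a clean replacement for the paper's cruder triangle-inequality decomposition into $2T_1+4T_2$; both lead to the same key quantity $\sum_k L_k^2\|X_{n,t+k}\|^{\bar\alpha}\big/\sum_k\|X_{n,t+k}\|^{\bar\alpha}$.

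The gap is in your treatment of the big-index piece. You claim that the $t$-average of $\sum_{|k|\le s_n}(\log^+\|X_{n,t+k}\|)^2\Ind{\|X_{t+k}\|>u_n}$ is $O_P(1)$, and that this follows from $(\log^+ x)^2\le C+x^\delta$, Karamata, and (BC). But after inserting this inequality, you must control $v_n^{-1}E\big[\|X_{n,k}\|^\delta\Ind{\|X_k\|>u_n,\|X_0\|>u_n}\big]$ summed over $|k|\le s_n$, and there is no factorization that turns this into $O(1)\cdot e_{n,1}(|k|)$: the moment $\|X_{n,k}\|^\delta$ and the indicator $\Ind{\|X_0\|>u_n}$ are entangled, and neither (BC) nor Karamata separates them. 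A H\"older or layer-cake argument yields at best $C\,e_{n,1}(|k|)^{1-\eps}$ for some $\eps>0$, whose summability is not guaranteed by (BC).

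The paper avoids this entirely by a much simpler bound: on the event $B_n:=\{\max_{1\le t\le n}\|X_{n,t}\|\le n^{2/\alpha}\}$, which has $P(B_n^c)\to 0$ by regular variation, one has $(\log^+\|X_{n,t+k}\|)^2\le (2\alpha^{-1}\log n)^2$ deterministically, so the big-index fraction is bounded by $4\alpha^{-2}\log^2 n$. This gives the big-index contribution to $\tilde Q_n(A)$ as $O_P(\log^2 n)$ rather than $O_P(1)$, but since you only need $\sup_A|\tilde Q_n(A)|=o_P(\sqrt{nv_n})$ and $\log^2 n=o(\sqrt{nv_n})$ by assumption, this suffices. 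Replacing your expectation argument by this high-probability maximum bound repairs the proof with no other changes.
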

\begin{proof}
	Because $(\hat{\alpha}_n-\alpha)^2=\Ord_P((nv_n)^{-1})$ by \eqref{eq:joint.convergence.pb.alpha} and $nv_n/(\sum_{t=1}^{n}\mathds{1}_{\{\|X_t\|>u_n\}})\to 1$ in probability, due to the weak convergence in Proposition \ref{prop:asymptotic.of.tng} for $\mathbb{R}^d$, it suffices to show that
	\begin{align} \label{eq:unknow.alpha.second.taylor}
  \frac{2\sum_{t=1}^{n} \Ind{\|X_t\|>u_n}}{(\hat{\alpha}_n-\alpha)^2}|II(A)|
	&\le 2\sum_{t=1}^{n} \Ind{\|X_t\|>u_n} \sum_{h=-s_n}^{s_n} \frac{\log^2(\|X_{n,t+h}\|)\|X_{n,t+h}\|^{\bar{\alpha}}}{\sum_{k=-s_n}^{s_n} \|X_{n,t+k}\|^{\bar{\alpha} }}\\
   & \hspace{0,5cm} + 4 \sum_{t=1}^{n} \Ind{\|X_t\|>u_n} \bigg(\frac{ \sum_{h=-s_n}^{s_n}|\log(\|X_{n,t+h}\|)|\|X_{n,t+h}\|^{\bar{\alpha}}}{\sum_{k=-s_n}^{s_n} \|X_{n,t+k}\|^{\bar{\alpha} }}\bigg)^2\\
  & =: 2T_1 +4T_2 = \ord_P\big((nv_n)^{3/2}\big).
\label{eq:unknow.alpha.second.taylor2}
	\end{align}

	For this purpose we distinguish whether $\|X_{t+h}\|$ exceeds $u_n$ or not.
	Define the set $B_n:=\big\{\max_{1\leq t\leq n}\|X_{n,t}\|\le n^{2/\alpha}\big\}$. The regular variation of $\|X_0\|$ implies, for all $\epsilon\in(0,1/2)$ and sufficient large $n$,
	\begin{align}
	P(B_n^c)&\leq n P\big\{\|X_{n,0}\|>n^{2/\alpha}\big\}\leq nP\big\{\|X_0\|>n^{2/\alpha}\big\}=\ord\big(n(n^{2/\alpha})^{-\alpha(1-\epsilon)}\big)
	=\ord(1).
	\end{align}
	 Since on the set $B_n$
	\begin{align}
	0\leq \log(\|X_{n,t}\|)\mathds{1}_{\{\|X_t\|\geq u_n\}} \leq \frac{2}{\alpha}\log n
	\end{align}
	for all $1\leq t\leq n$, we may conclude that, with probability tending to 1,
	\begin{align}
	  \sum_{t=1}^n & \Ind{\|X_t\|>u_n} \sum_{h=-s_n}^{s_n} \frac{(\log^2(\|X_{n,t+h}\|)\|X_{n,t+h}\|^{\bar{\alpha}}\Ind{\|X_{t+h}\|>u_n}}{\sum_{k=-s_n}^{s_n} \|X_{n,t+k}\|^{\bar{\alpha} }}\\
	 & \leq \frac{4}{\alpha^2}\log^2 n \sum_{t=1}^n \Ind{\|X_t\|>u_n}
= \ord\big((nv_n)^{3/2}\big),
\label{eq:unknow.alpha.2.taylor.I.gro}
	\end{align}
	since $\log^2 n =\hbox{o}(\sqrt{nv_n})$ by assumption.

   Next, observe that by the first assertion of Lemma \ref{lemma:maximizing.m}
  \begin{align}
	& \sum_{t=1}^{n} \Ind{\|X_t\|>u_n} \sum_{h=-s_n}^{s_n} \frac{\log^2(\|X_{n,t+h}\|)\|X_{n,t+h}\|^{\bar{\alpha}}\Ind{\|X_{t+h}\|\le u_n}}{\sum_{k=-s_n}^{s_n} \|X_{n,t+k}\|^{\bar{\alpha} }}\\
	&\leq \sum_{t=1}^{n} \Ind{\|X_t\|>u_n}\frac{1}{\bar{\alpha}^2}\sum_{h=-s_n, h\neq 0}^{s_n} \frac{\log^2(\|X_{n,t+h}\|^{\bar{\alpha}})\|X_{n,t+h}\|^{\bar{\alpha}} \Ind{\|X_{n,t+h}\|\le 1}}{1+\sum_{k=-s_n,k\neq 0}^{s_n} \|X_{n,t+k}\|^{\bar{\alpha} }}\\
   & = \Ord(\log^2 s_n) \sum_{t=1}^{n} \Ind{\|X_t\|>u_n} = \ord\big((nv_n)^{3/2}\big).
\label{eq:unknown.alpha.2.taylor.kleiner1.I}
	\end{align}
Combine \eqref{eq:unknow.alpha.2.taylor.I.gro} and \eqref{eq:unknown.alpha.2.taylor.kleiner1.I} to see that $T_1$ is stochastically of smaller order than $(nv_n)^{3/2}$.

Similarly as in  \eqref{eq:unknow.alpha.2.taylor.I.gro}, on the set $B_n$ we obtain
	\begin{align}
	&\sum_{t=1}^{n} \Ind{\|X_t\|>u_n} \bigg( \frac{ \sum_{h=-s_n}^{s_n}|\log(\|X_{n,t+h}\|)|\|X_{n,t+h}\|^{\bar{\alpha}} \Ind{\|X_{t+h}\|>u_n}}{\sum_{k=-s_n}^{s_n} \|X_{n,t+k}\|^{\bar{\alpha} }}\bigg)^2\\
	& \leq \frac{4}{\alpha^2}\log^2 n \sum_{t=1}^n \Ind{\|X_t\|>u_n}
= \ord\big((nv_n)^{3/2}\big).\label{eq:unknow.alpha.2.taylor.IV.gro}
	\end{align}
	Moreover, by the second assertion of Lemma  \ref{lemma:maximizing.m}
  \begin{align}
	&\sum_{t=1}^{n} \Ind{\|X_t\|>u_n} \bigg(\frac{ \sum_{h=-s_n}^{s_n}|\log\|X_{n,t+h}\||\|X_{n,t+h}\|^{\bar{\alpha}} \Ind{\|X_{t+h}\|\le u_n}}{\sum_{k=-s_n}^{s_n} \|X_{n,t+k}\|^{\bar{\alpha} }}\bigg)^2\\
	& = \Ord (\log^2 s_n) \sum_{t=1}^{n} \Ind{\|X_t\|>u_n} = \ord\big((nv_n)^{3/2}\big).
  \label{eq:secondtermnegativepart}
  \end{align}
  A combination of both these results show that $T_2$, which can be bounded by double the sum of the left-hand sides of \eqref{eq:unknow.alpha.2.taylor.IV.gro} and \eqref{eq:secondtermnegativepart}, is stochastically of smaller order than $(nv_n)^{3/2}$.
  This concludes the proof.
\end{proof}

In the last lemma, the proof of which is given in the Supplement, the limit behavior of the covariance between $(Z(A))_{A\in\mathcal{A}}$ and $\Zphi$ and of  the variance of $\Zphi$ is analyzed.
\begin{lemma}
	\label{lemma:covaraince.pb.unknown.alpha}
	Suppose the conditions (RV), (S), (BC), (TC), (C$\Theta$), and (BC') are satisfied. Then
	\begin{enumerate}
		\item[(i)]
		\begin{align}
		\frac{1}{r_nv_n}&Var\bigg(\sum_{t=1}^{r_n}\phi(W_{n,t})\bigg)
		=\alpha^{-1}\sum_{k\in\mathbb{Z}}E\left[(1\wedge \|\Theta_k\|^\alpha)(|\log(\|\Theta_k\|)|+2\alpha^{-1})\right],
		\end{align}
		
		\item[(ii)] For all $A\in\mathcal{A}$,
		\begin{align}
		\frac{1}{r_nv_n}&Cov\bigg(\sum_{j=1}^{r_n}\phi(W_{n,j}),\sum_{t=1}^{r_n}g_A(W_{n,t})\bigg)\\
		&=\sum_{k\in\mathbb{Z}}E\left[\sum_{h\in\mathbb{Z}}\frac{\|\Theta_h\|^\alpha}{\|\Theta\|_\alpha^\alpha}\ind{ A}\left(\frac{\Theta_{h+i}}{\|\Theta_h\|}\right) (1\wedge \|\Theta_k\|^\alpha)(\log^+\|\Theta_k\|+\alpha^{-1})\right].
		\end{align}
		
	\end{enumerate}
\end{lemma}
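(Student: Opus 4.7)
The plan is to adapt the blueprint of Lemma \ref{lemma:covarianz konkreter schaetzer} to the unbounded integrand $\phi$, using (BC') in place of (BC) to dominate the resulting product of logarithmic factors, and closing with an explicit evaluation of the limit against the Pareto law of $\|Y_0\|$.

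By stationarity, shifting the inner index gives
\begin{align}
\frac{1}{r_nv_n}Cov\bigg(\sum_{j=1}^{r_n}\phi(W_{n,j}),\sum_{t=1}^{r_n}g_A(W_{n,t})\bigg)
=\sum_{|k|<r_n}\Big(1-\tfrac{|k|}{r_n}\Big)\frac{E[\phi(W_{n,k})g_A(W_{n,0})]}{v_n}-\frac{r_n}{v_n}E[\phi(W_{n,0})]\,E[g_A(W_{n,0})],
\end{align}
and analogously for the variance in (i). Since $E[\phi(W_{n,0})]=\Ord(v_n)$ by regular variation and $E[g_A(W_{n,0})]\le v_n$, the correction term is $\Ord(r_nv_n)=\ord(1)$; the analogous term for (i) is handled the same way. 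As $\phi(W_{n,k})$ vanishes on $\{\|X_k\|\le u_n\}$ and $g_A(W_{n,0})$ vanishes on $\{\|X_0\|\le u_n\}$, one may rewrite $v_n^{-1}E[\phi(W_{n,k})g_A(W_{n,0})]=E[\phi(W_{n,k})g_A(W_{n,0})\mid\|X_0\|>u_n]$. Using the finite-dimensional tail process convergence \eqref{eq:tailprocess}, a.s.\ continuity of the integrand (via (C$\Theta$) and Lemma \ref{lemma:umformulierung.e3}) and truncation of the cluster sum in $g_A$ via (TC) (exactly as in \eqref{eq:expecconv.f}), the $k$-th summand converges to $E[\log^+\|Y_k\|\cdot\tilde g_A(Y)]$, where $\tilde g_A(y):=\ind{\{\|y_0\|>1\}}\sum_{h\in\Z}\|y_h\|^\alpha\|y\|_\alpha^{-\alpha}\ind{A}(y_{h+i}/\|y_h\|)$; the analogue for (i) yields $E[\log\|Y_0\|\log^+\|Y_k\|]$ with no cluster-truncation needed. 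Because $\log^+ x\le\max(\log x,1)$ and $\log^+ x\le\max(\log x,\ind{\{x>1\}})$, condition (BC') furnishes the summable bound $e'_n(|k|)$ with $\sum_k e'_\infty(k)<\infty$, so Pratt's lemma converts pointwise into summable convergence.

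It remains to evaluate these limiting expectations. Writing $Y_k=\Theta_k R$ with $R:=\|Y_0\|$ independent of $\Theta$ and having density $\alpha r^{-\alpha-1}\ind{[1,\infty)}(r)$, elementary integration (splitting into the cases $s\ge 1$ and $s\in(0,1)$, with the effective lower limit $1/s$ in the latter) gives, for $s:=\|\Theta_k\|\ge 0$,
\begin{align}
E[\log^+(sR)]&=(s^\alpha\wedge 1)\bigl(\log^+ s+\alpha^{-1}\bigr),\\
E[\log R\cdot\log^+(sR)]&=(s^\alpha\wedge 1)\alpha^{-1}\bigl(|\log s|+2\alpha^{-1}\bigr),
\end{align}
with both sides set to $0$ at $s=0$. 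Since $\|Y_h\|^\alpha/\|Y\|_\alpha^\alpha=\|\Theta_h\|^\alpha/\|\Theta\|_\alpha^\alpha$ and $Y_{h+i}/\|Y_h\|=\Theta_{h+i}/\|\Theta_h\|$ whenever $\|\Theta_h\|>0$, conditioning on $\Theta$ and substituting these identities produces the two displayed formulas.

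The principal obstacle is the dominated-convergence/Pratt step: because $\phi$ is unbounded, (BC) is insufficient and one needs the stronger (BC') (combined with the pointwise bounds on $\log^+$ above) to obtain a summable dominating sequence. A subsidiary but nontrivial point in (ii) is the passage from the finite block of size $2s_n+1$ appearing in $g_A(W_{n,0})$ to the full infinite cluster in the limit, which is controlled by (TC) exactly as in Lemma \ref{lemma:covarianz konkreter schaetzer}.
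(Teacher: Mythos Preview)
Your approach is correct and mirrors the paper's own (deferred to the Supplement), namely the decomposition of Lemma \ref{lemma:covarianz konkreter schaetzer} with (BC') supplying the Pratt domination and the final reduction via independence of $\|Y_0\|$ and $\Theta$ together with the two explicit Pareto integrals. One point needs to be made explicit: the pointwise convergence of the $k$-th summand is \emph{not} ``exactly as in \eqref{eq:expecconv.f}'' because there the integrand $f$ was bounded, whereas here $\phi(W_{n,k})=\log^+\|X_{n,k}\|$ is not; weak convergence of $\mathcal L((X_{n,t})_{|t|\le s_n}\mid\|X_0\|>u_n)$ to $\mathcal L(Y)$ therefore does not immediately yield convergence of the conditional expectation. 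The missing uniform integrability follows from
\[
E\big[(\log^+\|X_{n,k}\|)^{1+\delta}\,\big|\,\|X_0\|>u_n\big]
\le v_n^{-1}E\big[(\log^+\|X_{n,k}\|)^{1+\delta}\big]
= E\big[(\log^+\|X_{n,0}\|)^{1+\delta}\,\big|\,\|X_0\|>u_n\big]=\Ord(1)
\]
by stationarity and regular variation, which then justifies the step for both (i) and (ii).
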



\bibliography{Dissbib_070321}
\bibliographystyle{agsm}
\end{document}